\documentclass{article}
\usepackage{amsbsy}
\usepackage{amsmath,bm,enumerate}
\usepackage{amssymb}
\usepackage{amsthm}
\usepackage{amsmath}
\usepackage{graphicx}
\usepackage{float}
\usepackage{cite}
\usepackage{subfigure}
\usepackage{color}
\usepackage{mdwlist}
\usepackage{authblk}

\allowdisplaybreaks 
\numberwithin{equation}{section}
\newtheorem{theorem}{Theorem}[section]
\newtheorem{lemma}{Lemma}[section]
\newtheorem{proposition}{Proposition}[section]
\newtheorem{corollary}{Corollary}[section]
\theoremstyle{definition}
\newtheorem{remark}{Remark}[section]
\newcommand{\R}{\mathbb R}
\newcommand{\eps}{\varepsilon}
\newcommand{\dd}{\,\mathrm d}
\newcommand{\Hrad}{H^1_{\mathrm{rad}}(\R^n)}
\newcommand{\Xrad}{C_{0,\mathrm{rad}}(\R^n)}
\newcommand{\ps}{p_{\mathrm S}(n)}
\newcommand{\eqnum}[1]{\refstepcounter{equation}\label{#1}\eqno(\theequation)}
\DeclareMathOperator{\arsinh}{arsinh}

\title{\Large \bf  Uniqueness and radial nondegeneracy of nodal states for a mixed-power scalar field equation}
\author{Zhibo Cheng\thanks{Corresponding author. E-mail: czb\_1982@126.com}
\hspace{0.25em} and \hspace{0.05em}
Sunan Wang}

\affil{\small \itshape School of Mathematics and Information Science, Henan Polytechnic University, Jiaozuo, China}

\date{}
\begin{document}

\maketitle
\begin{center}
\begin{minipage}{14cm}
{\bf Abstract}---We consider the scalar field equation
$$\begin{cases}
-\Delta u+u+\lambda |u|^{q-1}u
=\mu |u|^{p-1}u, & x\in\mathbb R^n,\\[2mm]
u(x)\to 0, & |x|\to\infty,
\end{cases}$$
 where $n\geq3$, $1<q<p<(n+2)/(n-2)$, and $\lambda,\mu>0$. We study uniqueness and radial nondegeneracy of its sign-changing radial bound states. The proof combines a comparison argument for the shooting variation across nodal intervals with uniform compactness estimates for families of bounded nodal number and continuation in a normalized parameter. We show that, for every integer $k\geq1$, fixing the centre and requiring $u(0)>0$ leaves exactly one radial bound state with $k$ nodes. Its radial linearized operator has trivial kernel in the radial space. For each fixed $k$, the corresponding normalized solutions form a global $C^1$ branch.

{\bf Keywords}---radial bound states, nodal solutions, mixed-power nonlinearity, nondegeneracy, shooting method, global continuation.\\
{\bf MSC2020}---35A02, 35J61, 34B15, 35B40. \\
\end{minipage}
\end{center}

\section{Introduction}

We study radial bound states of the scalar field equation
\begin{equation}{\label{eq1.1}}
\begin{cases}
-\Delta u+u+\lambda |u|^{q-1}u
=\mu |u|^{p-1}u, & x\in\mathbb R^n,\\[2mm]
u(x)\to 0, & |x|\to\infty,
\end{cases}
\end{equation}
where $\lambda,\mu>0$, and under the assumptions
\begin{equation}{\label{eq1.2}}
n\ge3,~~
1<q<p<\ps:=\frac{n+2}{n-2}.
\end{equation}
Our focus is on radial bound states that tend to zero at infinity. We fix their centre at the origin and take $u(0)>0$. A zero of the radial profile in $(0,\infty)$ is called a node. The principal questions are whether a bound state with exactly $k$ nodes is unique for each prescribed positive integer $k$, and whether the corresponding radial linearized operator has a nontrivial kernel.

Equation \eqref{eq1.1} is also the standing-wave equation for a nonlinear Schr\"odinger equation with competing
power nonlinearities. The lower-power term is defocusing, whereas the higher-power term is focusing, and their relative strengths vary with the amplitude. Positive localized profiles correspond to ground states, while sign-changing profiles are associated with excited states. Positive bound states and sign-changing bound states thus present distinct classification problems. Work on solitary waves and their variational properties provides further context for the study of such profiles \cite{Weinstein, Strauss, Weinstein1983, Grillakis}.

The existence theory for nonlinear scalar field equations goes back to Berestycki and Lions \cite{Berestycki, Berestycki1983}; related ODE methods and existence results for solitary waves can be found in \cite{Strauss, Berestycki1981}. Under suitable growth and sign conditions, equations of this type possess both a positive ground state and infinitely many radial solutions. McLeod, Troy and Weissler \cite{McLeod} studied radial solutions with prescribed numbers of zeros, while Jones and K\"upper \cite{Jones} investigated the existence of infinitely many solutions through the structure of the radial equation. These results make the nodal number a natural way to distinguish sign-changing bound states. The existence of a $k$-node solution, however, does not determine how many solutions have that nodal number.

There is a substantial theory of uniqueness for positive solutions.
Coffman \cite{Coffman} treated the three-dimensional cubic pure-power equation, and Kwong \cite{Kwong} established uniqueness of the positive pure-power solution in the corresponding general range. Peletier and Serrin \cite{Peletier}, McLeod and Serrin \cite{McLeod1987}, McLeod \cite{McLeod1993}, and Chen and Lin \cite{Chen}, among others, developed comparison arguments and uniqueness results for more general nonlinearities; Cort\'azar, Elgueta, and Felmer \cite{Cortazar1998} also gave conditions for uniqueness of positive solutions. For the nonlinearity in \eqref{eq1.1}, the criterion of Serrin and Tang \cite{Serrin} applies to uniqueness of positive radial solutions. The questions addressed here are therefore uniqueness of sign-changing solutions with any prescribed positive nodal number and radial nondegeneracy of those solutions.

Sign-changing solutions present additional difficulties for comparison arguments. Each time a radial profile crosses a node, both the solution and its variation with respect to the initial value at the centre may change sign. Trajectories arising from different initial values may also intersect again in later nodal intervals. A comparison on the first monotonicity interval therefore cannot simply be repeated.
Cort\'azar, Garc\'ia-Huidobro and Yarur
studied uniqueness of particular nodal solutions and, subsequently, uniqueness of more general sign-changing bound states \cite{Cortazar, Cortazar2009}.
Ao, Wei and Yao \cite{Ao} obtained uniqueness and nondegeneracy when the exponent is close to the critical exponent.
Cohen, Li and Schlag \cite{Cohen} proved uniqueness of the first twenty excited states of the three-dimensional cubic equation by rigorous computer-assisted analysis. On the other hand, for certain nonlinearities, several bound states can have the same nodal number \cite{Cortazar2015}; see also \cite{Herreros} for the role of the intermediate-amplitude regime in multiplicity. Thus nodal uniqueness depends on the specific structure of the nonlinearity and does not follow from subcritical growth or existence alone.

For double-power equations, it is also essential to distinguish the signs of the power terms and whether the solutions under consideration are positive or sign-changing. Lewin and Rota Nodari \cite{Lewin} studied uniqueness, nondegeneracy, and parameter dependence of positive radial solutions for a different double-power structure. Killip, Oh, Pocovnicu, and Visan \cite{Killip} investigated solitary waves and scattering for a cubic-quintic model. For a model in which both nonlinear power terms are focusing, D\'avila, del Pino, and Guerra \cite{Davila} exhibited nonuniqueness of positive radial solutions; more recently, Song and Zou \cite{Song} studied uniqueness of positive solutions for that sign configuration under specified restrictions on the dimension and exponents. Each of these results concerns its own equation and parameter range, and none directly yields uniqueness of the $k$-node solutions of \eqref{eq1.1} for arbitrary $k$.

Tang \cite{Tang} resolved the radial nodal uniqueness problem for the pure-power equation
$$-\Delta u+u=|u|^{p-1}u~~\text{in }\R^n$$
throughout the Sobolev subcritical range when $n\ge3$: for each $k\ge1$, fixing the centre and the sign at the origin leaves exactly one $k$-node bound state. In the same paper, Tang further proposed nodal uniqueness conjectures for several classes of nonlinearities, including
$$f(s)=-s-\lambda|s|^{q-1}s+\mu|s|^{p-1}s,~~\lambda,\mu>0.$$
We prove that, in the range $n\geq3$, $1<q<p<(n+2)/(n-2)$, and $\lambda,\mu>0$, there is exactly one radial bound state for each prescribed positive nodal number. This gives an affirmative answer to Tang's conjecture in the stated double-power parameter range. Other related shooting analyses appear in \cite{Zhang}, although the regularity of the nonlinearity near zero and the tail behaviour of the bound states there differ from those in the present problem.

The first difficulty in our proof arises in the small-amplitude regime. Let
$$F(s)=\int_0^s f(t)\,dt.$$
For $s>0$,
$$sf'(s)-f(s)=\mu(p-1)s^p-\lambda(q-1)s^q,$$
and
$$sf(s)-2F(s)=\frac{\mu(p-1)}{p+1}s^{p+1}-\frac{\lambda(q-1)}{q+1}s^{q+1}.$$
Because $q<p$, both expressions are negative at sufficiently small positive amplitudes and positive at larger amplitudes. Auxiliary quotients with either expression in the denominator therefore cannot be used throughout an entire nodal interval. To carry the comparison across nodes, we retain the variational information in the large-amplitude region and use a regular bridging quantity, which involves neither denominator, to pass through the small-amplitude region. A comparison at equal absolute amplitudes and an integral estimate over the intermediate region then transfer the required positivity to the outgoing arc, allowing successive nodal intervals to be treated in turn. The same argument determines the zero structure of the shooting variation and excludes nontrivial decaying radial solutions of the linearized equation.

The second difficulty is to pass from radial nondegeneracy of individual solutions to uniqueness for all positive parameter values. A local branch issuing from a known solution does not rule out other branches; moreover, on the whole space, local convergence may be accompanied by nodes or peaks escaping to infinity. We establish uniform a priori estimates for all bound states with bounded nodal number as the parameter ranges over a compact interval. In particular, an estimate of the energy dissipated by a distant outgoing peak yields uniform control of the tails, preservation of the nodal number, and strong compactness. Combined with radial nondegeneracy, these properties show that the number of solutions with a fixed nodal number is locally constant with respect to the normalized parameter. Starting from Tang's pure-power uniqueness theorem \cite{Tang} at the zero-parameter limit, we obtain uniqueness for \eqref{eq1.1} for every $\lambda,\mu>0$ and prove that the normalized solutions form a global $C^1$ branch.

The results above establish existence, uniqueness, and radial nondegeneracy of the prescribed-node bound states, together with a global branch description, under the higher-dimensional assumptions in \eqref{eq1.2}. The two-dimensional case is treated separately in Appendix A, where the comparison theorem of \cite{Cortazar} yields an at most one result for the same configuration of coefficient signs.

The remainder of the paper is organized as follows. Section 2 states the main results. Section 3 establishes the a priori estimates and compactness needed for parameter continuation. Section 4 develops the structural inequalities, Sections 5 and 6 prove the comparison across nodal intervals and radial nondegeneracy, respectively, and Section 7 completes the continuation and solution-counting arguments.

\section{Main results}

We now state the main results. Besides uniqueness and radial nondegeneracy for each prescribed positive nodal number, we establish compactness for bound states with bounded nodal number and describe the resulting global solution branch. A normalization of the coefficients reduces the parameter dependence to a single quantity and allows the pure-power equation to serve as the endpoint of the continuation argument.

Throughout this section, assume \eqref{eq1.2}.
The centre of every radial solution is fixed at the
origin, and its value there is required to be positive.
We write
$$
H^1_{\mathrm{rad}}(\R^n)
=
\{u\in H^1(\R^n):u\text{ is radial}\}
$$
and
$$
C_{0,\mathrm{rad}}(\R^n)
=
\{u\in C_0(\R^n):u\text{ is radial}\},
$$
where \(C_0(\R^n)\) denotes the space of continuous
functions vanishing at infinity, equipped with the
uniform norm.

The transformation
$$
U=\mu^{1/(p-1)}u,
\qquad
\eps=\lambda\mu^{-(q-1)/(p-1)}
\eqnum{eq:normalization}
$$
reduces \eqref{eq1.1} to
$$
-\Delta U+U+\eps|U|^{q-1}U
=
|U|^{p-1}U
\quad\text{in }\R^n.
\eqnum{eq:normalized}
$$
We include the endpoint \(\eps=0\).
Unless otherwise stated, \(u\) denotes the normalized
profile in the results concerning
\eqref{eq:normalized}.

Our first result concerns uniqueness and radial
nondegeneracy for arbitrary positive coefficients.

\begin{theorem}\label{thm:uniqueness}
For every \(\lambda,\mu>0\) and every integer \(k\ge1\),
equation \eqref{eq1.1} admits exactly one radial
bound state satisfying \(u(0)>0\) and having exactly
\(k\) nodes.

All its nodes are simple, and
$$
|u(r)|<u(0)
\qquad\text{for every }r>0.
$$
Moreover, \(u\in H^1(\R^n)\), and \(u\) and its radial
derivative decay exponentially at infinity.

The solution is radially nondegenerate.
More precisely, if
$$
L_u
=
-\Delta+1+\lambda q|u|^{q-1}
-\mu p|u|^{p-1},
$$
then
$$
\{w\in H^1_{\mathrm{rad}}(\R^n):
L_uw=0\text{ weakly in }\R^n\}
=
\{0\}.
\eqnum{eq:radial-nondegeneracy}
$$
\end{theorem}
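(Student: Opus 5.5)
The proof reduces the statement to the normalized equation \eqref{eq:normalized} and then runs a continuation in \(\eps\), starting from Tang's theorem at \(\eps=0\). The map \(u\mapsto U=\mu^{1/(p-1)}u\) preserves radial symmetry, nodes, their simplicity, and the sign at the origin. It conjugates \(L_u\) to \(\mu^{-1}\)-independent form \(-\Delta+1+\eps q|U|^{q-1}-p|U|^{p-1}\). Hence it suffices to prove, for every \(\eps\ge0\) and every \(k\ge1\), that \eqref{eq:normalized} has exactly one radial bound state with \(U(0)>0\) and \(k\) nodes, and that this state is radially nondegenerate.

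\textbf{Qualitative properties.} For any radial bound state with \(U(0)>0\), write \(U(r)\) for its profile and use the energy
\[
E(r)=\tfrac12 U'^2+F(U),\qquad F(s)=-\tfrac12 s^2-\tfrac{\eps}{q+1}|s|^{q+1}+\tfrac1{p+1}|s|^{p+1}.
\]
It satisfies \(E'=-\frac{n-1}{r}U'^2\le0\), and \(E(r)\to0\) as \(r\to\infty\).
\begin{itemize*}
\item Simple nodes follow from ODE uniqueness: a double zero would force \(U\equiv0\).
\item For \(|U(r)|<U(0)\): at any critical point \(r_0>0\) we have \(F(U(r_0))=E(r_0)<E(0)=F(U(0))\), with strict inequality since \(U'\not\equiv0\) near \(0\). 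Moreover \(F(U(0))<0\) would contradict \(E(\infty)=0\) together with strict decrease. Since \(F\) is increasing in \(|s|\) on \(\{F\ge F(U(0))\}\cap\{|s|\ge\) the positive zero of \(f\}\), we get \(|U(r_0)|<U(0)\).
\item Exponential decay comes from the linearization \(U''+\frac{n-1}rU'=U(1+o(1))\) at infinity once \(U\to0\). The tail is eventually monotone and of one sign, and a standard comparison with \(e^{-(1-\delta)r}\) gives decay of \(U\) and \(U'\). This yields \(U\in H^1\).
\end{itemize*}

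\textbf{Radial nondegeneracy.} A radial \(H^1\) kernel element \(w\) is, by ODE uniqueness, a multiple of the shooting variation \(\phi=\partial_\alpha U(r;\alpha)\) with \(\phi(0)=1\). So one must show that \(\phi\) does not decay. I would do this by the comparison scheme of Sections 5--6. On each nodal interval, I compare \(\phi\) with the test functions \(rU'\) and \(U\) through Wronskian-type identities. In the large-amplitude regions I use the quotients built from \(sf'(s)-f(s)\) and \(sf(s)-2F(s)\), which are positive there. To cross the small-amplitude region where these degenerate, I use the bridging quantity. The goal is to prove inductively that \(\phi\) has exactly one zero in each arc between consecutive extrema, so that \(\phi\) has exactly \(k+1\) zeros on the interval where \(U\) has \(k\) nodes. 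Finally, a sign argument on the last (tail) interval shows \(|\phi|\to\infty\), since \(\phi\) and \(U\) end up with incompatible asymptotics. This propagation across nodes is the main obstacle: the positivity inherited on the outgoing arc must be transferred through the intermediate region by the equal-absolute-amplitude comparison and integral estimate. I would rely on those structural inequalities of Section 4 as stated.

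\textbf{Counting and continuation.} Fix \(k\) and let \(N(\eps)\) be the number of \(k\)-node solutions. Radial nondegeneracy plus the implicit function theorem in \(H^1_{\mathrm{rad}}\), or directly on the shooting parameter \(\alpha\), show that each solution persists as a locally unique \(C^1\) curve, with the nodal number preserved because nodes are simple. The compactness results of Section 3 say that, for \(\eps\) in a compact interval and bounded nodal number, the solutions are uniformly bounded and precompact, and the nodal number passes to the limit. Together these give that the solution set over \([0,\eps_0]\) is a finite union of curves that can neither end nor bifurcate, so \(N\) is locally constant. Since \(N(0)=1\) by Tang's theorem \cite{Tang}, we get \(N(\eps)=1\) for all \(\eps\ge0\). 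Undoing the scaling proves the theorem.
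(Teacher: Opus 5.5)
Your proposal follows the paper's proof essentially step for step. It uses the same normalization and obtains nondegeneracy through the shooting variation: your Wronskians with $rU'$ and $U$ are exactly the paper's $Q$ and $M$, your large-amplitude quotients are $2f/d$ and $2F/h$, and the bridge, equal-amplitude comparison and middle-interval integral give $k+1$ zeros and $|v|\to\infty$ as in Theorem~\ref{the5.1}. It then combines the implicit function theorem with Theorem~\ref{thm:compactness} to make $N_k(\eps)$ locally constant, anchored at Tang's $N_k(0)=1$.

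Two justifications need tightening.
\begin{enumerate}
\item Simplicity of nodes only preserves them on compact intervals. To keep exactly $k$ nodes along the local branch you also need uniform smallness of the nearby solutions on a tail. You can get this from closeness in $H^1_{\mathrm{rad}}$ via the radial lemma, or from the exterior maximum-principle argument of Lemma~\ref{lem6.2}. Combine it with the fact that any node of a bound state must be followed by an extremum of amplitude above $\beta$.
\item A trivial radial kernel makes the linearization invertible only together with the Fredholm index-zero property, which is Lemma~\ref{lem6.1} in the paper.
\end{enumerate}
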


The nondegeneracy assertion concerns radial
perturbations.
In the full space, translation invariance gives
$$
L_u(\partial_{x_j}u)=0,
\qquad j=1,\ldots,n.
$$
No assertion about the remaining nonradial modes
is made here.

The next result gives uniform estimates and
compactness at bounded nodal number.
Here and below, \(u'\) denotes the derivative of
the radial profile.

\begin{theorem}\label{thm:compactness}
Fix \(E_*\in[0,\infty)\) and an integer \(K\ge0\).
There exist constants \(C,R,c>0\), depending only on
\(n,p,q,E_*,K\), such that every radial bound state
of \eqref{eq:normalized} with
\(u(0)>0\), \(0\le\eps\le E_*\), and at most \(K\)
nodes satisfies
\begin{equation}{\label{eq2.4a}}
\|u\|_{H^1(\R^n)}
+\|u\|_\infty+\|u'\|_\infty
\le C
\end{equation}
and
\begin{equation}{\label{eq2.5a}}
|u(r)|+|u'(r)|
\le Ce^{-c(r-R)}~~\text{for }r\ge R.
\end{equation}

Let \((\eps_j,u_j)\) be a sequence of such states,
each having exactly \(k\) nodes, where
\(0\le k\le K\) is fixed.
After passage to a subsequence, there exist
\(\bar\eps\in[0,E_*]\) and a radial bound state
\(\bar u\) of \eqref{eq:normalized} with coefficient
\(\bar\eps\), positive initial value, and exactly
\(k\) nodes, such that
$$
\eps_j\longrightarrow\bar\eps,
\qquad
u_j\longrightarrow\bar u
\quad\text{in }H^1(\R^n)\cap C_0(\R^n).
$$
In addition,
$$
u_j\longrightarrow\bar u
\quad\text{in }C^1([0,A])
\qquad\text{for every }A>0.
$$
\end{theorem}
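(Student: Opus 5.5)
Throughout, $u$ denotes the radial profile, which solves $u''+\frac{n-1}{r}u'+f_\eps(u)=0$ with $u'(0)=0$ and $f_\eps(s)=-s-\eps|s|^{q-1}s+|s|^{p-1}s$. We use the energy
$$E(r)=\tfrac12u'^2+F_\eps(u),\qquad F_\eps(s)=-\tfrac12 s^2-\tfrac{\eps}{q+1}|s|^{q+1}+\tfrac1{p+1}|s|^{p+1},$$
which satisfies $E'=-\frac{n-1}{r}u'^2\le0$. Since $u\to0$ at infinity, $E(\infty)=0$, so $E\ge0$ everywhere. The proof has three steps: an $L^\infty$ bound, uniform exponential tails, and compactness.

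\emph{Step 1: the bound on $u(0)$.} We argue by contradiction, using a blow-up sequence with $u_j(0)=\alpha_j\to\infty$. Set $v_j(s)=\alpha_j^{-1}u_j(\alpha_j^{-(p-1)/2}s)$. Because $q<p$ and $\eps_j$ stays bounded, $v_j$ converges in $C^1_{\rm loc}$ to the solution $v$ of $v''+\frac{n-1}{s}v'+|v|^{p-1}v=0$ with $v(0)=1$. Since $p<\ps$, the Lane--Emden solution $v$ oscillates and has infinitely many simple zeros. By $C^1_{\rm loc}$ convergence, $u_j$ then has more than $K$ nodes for large $j$, a contradiction. This gives $\|u\|_\infty\le C$. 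Because $E\ge0$ is nonincreasing, we get $\frac12u'^2\le E(0)-F_\eps(u)\le C$, so $\|u'\|_\infty\le C$ as well.

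\emph{Step 2: uniform tails.} This is the main obstacle. One must exclude a peak or node escaping to infinity while $\|u\|_\infty$ stays bounded. I would use the energy-dissipation idea announced in the introduction. Let $\beta_0>0$ be the positive zero of $F_0$, and note that $F_\eps\le F_0$. Any local extremum $r_*$ of $|u|$ satisfies $F_\eps(u(r_*))=E(r_*)\ge0$, so $|u(r_*)|\ge\beta_0$.

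Consider an extremum at large $r_*$ where $|u|\ge\beta_0$. The solution must then descend from amplitude $\beta_0$ to $\beta_0/2$, and on that stretch $f_\eps$ has a fixed sign bounded away from $0$. Hence $|u'|$ is bounded below there on an interval of length comparable to a constant, and the energy lost on it is at least $c_0/r_*\cdot c_1$. This lower bound alone is too weak, because $\sum 1/r$ can be small. The real point is different. Since at most $K+1$ monotone arcs exist, any extremum with $r_*\ge R$ must be the last one, and beyond it $u$ decays monotonically to $0$. So we must prevent the last extremum from being far out.

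For this I would combine two facts:
\begin{enumerate}
\item between consecutive extrema, the time spent with $|u|\le\delta$ is bounded, since $E$ is bounded and the linear part forces exit;
\item the time spent with $|u|\ge\delta$ on a monotone arc is bounded, because $|u'|$ is bounded below there. Indeed, if $|u'|$ were small while $|u|\in[\delta,\beta_0]$, then $E<0$ near that amplitude, a contradiction; near the maximum the ODE itself moves $u$.
\end{enumerate}
So each of the at most $K+1$ arcs has uniformly bounded length, except the final decay. This places all extrema and nodes in $[0,R]$.

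On $[R,\infty)$, $u$ is monotone with $|u|\le\beta_0$. Once $|u|\le\delta$, the equation is $u''+\frac{n-1}ru'=(1+O(\delta^{q-1}))u$, and a standard comparison with $e^{-c r}$ gives \eqref{eq2.5a}. The remaining issue is how long $u$ can linger in $[\delta,\beta_0]$ on $[R,\infty)$. The energy identity rules out a slow passage there: $u'^2\ge 2(E-F_\eps(u))\ge -2F_\eps(u)\ge c$ when $u\le\beta_0/2$, and $u''$ has a definite sign near $\beta_0$. Finally, the $H^1$ bound follows from \eqref{eq2.4a} and \eqref{eq2.5a}.

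\emph{Step 3: compactness.} Arzel\`a--Ascoli together with the ODE gives a subsequence converging in $C^1([0,A])$ with $\eps_j\to\bar\eps$. The limit $\bar u$ solves the limit ODE, and the uniform exponential bounds pass to $\bar u$. This yields convergence in $C_0$ and, by dominated convergence, in $H^1$.

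For the nodal count, all nodes of $u_j$ lie in $[0,R]$ and are simple with $|u_j'|\ge c$ there. This holds because $E\ge0$ at a node forces $u'^2=2E$, and $E$ is bounded below at nodes before the last extremum by the step-1 and step-2 estimates. Convergence in $C^1[0,R]$ therefore preserves exactly $k$ nodes. Finally, $\bar u(0)=\lim u_j(0)\ge\beta_0>0$.
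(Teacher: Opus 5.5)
Your Steps~1 and~3 follow essentially the paper's route: blow-up to the oscillating Lane--Emden profile, then local $C^1$ limits upgraded by uniform exponential tails. Step~2, however, has a genuine gap, and it is the step that carries the theorem.

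Fact~1 asserts that the time spent in $\{|u|\le\delta\}$ between consecutive extrema is uniformly bounded ``since $E$ is bounded and the linear part forces exit.'' An upper bound on $E$ does not help here, because the origin is a saddle. On a passage through $\{|u|\le\delta\}$ the energy identity gives only $u'^2\ge 2E+mu^2$. So the passage time is bounded only by $\frac{2}{\sqrt m}\arsinh\bigl(\delta\sqrt{m/(2E)}\bigr)$, which is unbounded as $E\to0$. Low-energy orbits shadow the stable manifold and can linger near zero for arbitrarily long; this is exactly how a node or peak would escape to infinity. Nothing in your argument bounds $E$ from below during such a passage. $E$ is nonincreasing, and at the next extremum it equals $F_\eps(u(c))$, which has no positive lower bound a priori. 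The claim that any extremum beyond $R$ must be the last one is also unsupported, since the count of arcs says nothing about where they sit.

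The missing ingredient is the one you discarded as too weak. Since $E$ decreases, a lower bound during the passage must come from what follows it. Suppose the profile leaves $\{|u|\le\delta\}$ outward at radius $T$. It then reaches $2\delta$ within time $D=\log2/\sqrt m$. The energy dissipated on that stretch gives $E\ge d/(T+D)$ on the whole preceding passage, with $d=\tfrac32(n-1)\sqrt m\,\delta^2$. Because lingering time depends only logarithmically on $E$, a passage entered at radius $T_{\mathrm{in}}$ satisfies $T-T_{\mathrm{in}}\le\frac{2}{\sqrt m}\arsinh\bigl(\delta\sqrt{m(T+D)/(2d)}\bigr)=O(\log T)$. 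This bounds $T$ in terms of $T_{\mathrm{in}}$.

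The paper uses this estimate in a contradiction argument against a local limit $U$, which forces $T_j-R\to\infty$ linearly against the logarithmic bound. You could instead iterate it over your at most $K+1$ arcs, together with your fact~2, to confine all extrema and nodes to a bounded interval.

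Without this estimate, Step~2 is unproved. So are everything that depends on it: the uniform tail, your lower bound on $|u_j'|$ at nodes (which needs the extrema confined to $[0,R]$), the preservation of the nodal count, and the $H^1$ convergence.
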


Radial nondegeneracy and the compactness in
Theorem~\ref{thm:compactness} yield the following
global description of the normalized solutions.

\begin{corollary}\label{cor:global-branch}
For every integer \(k\ge1\), the radial bound states
of \eqref{eq:normalized} with positive initial value
and exactly \(k\) nodes form a single global
\(C^1\) map
$$\eps \longmapsto u_{k,\eps}\in C_{0,\mathrm{rad}}(\R^n),
\eqnum{eq:global-branch}$$
where $\eps \in [0,\infty)$. The derivative at \(\eps=0\) is understood as
a right derivative.

The map is also continuous into \(H^1(\R^n)\)
and into \(C^1([0,A])\) for every \(A>0\).
In particular, as \(\eps\downarrow0\),
$$
u_{k,\eps}\longrightarrow u_{k,0}
\quad\text{in }H^1(\R^n)\cap C_0(\R^n)
$$
and
$$
u_{k,\eps}\longrightarrow u_{k,0}
\quad\text{in }C^1([0,A])
\qquad\text{for every }A>0.
$$
\end{corollary}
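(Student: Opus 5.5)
Granting Theorem~\ref{thm:uniqueness} (transported to the normalized equation by the scaling \eqref{eq:normalization}, which gives uniqueness and radial nondegeneracy for every \(\eps>0\)) and Theorem~\ref{thm:compactness}, the corollary reduces to an implicit function argument on the radial shooting map together with a uniqueness-based identification of branches. I would use the shooting formulation. For the radial ODE
\[
u''+\tfrac{n-1}{r}u'=u+\eps|u|^{q-1}u-|u|^{p-1}u,\qquad u(0)=\alpha,\ u'(0)=0,
\]
the solution \(u(r;\alpha,\eps)\) depends \(C^1\) on \((\alpha,\eps)\) locally uniformly in \(r\), including at \(\eps=0\). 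This holds because \(s\mapsto\eps|s|^{q-1}s\) is \(C^1\) in \((s,\eps)\) since \(q>1\), and it extends smoothly to \(\eps<0\), so one-sided derivatives at \(\eps=0\) pose no difficulty. A bound state with \(k\) nodes corresponds to an \(\alpha\) at which the trajectory enters the stable manifold of the origin in the phase plane after \(k\) sign changes.

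\textbf{Step 1: local \(C^1\) branches.} Fix \(\eps_0\ge0\) and the unique \(k\)-node state \(u_{k,\eps_0}\). Uniqueness at \(\eps_0=0\) is Tang's theorem \cite{Tang}, and the radial nondegeneracy there is the corresponding pure-power result, which I take from Tang, or from Section 6, whose argument covers \(\eps=0\). I would match the shooting solution at a large radius \(R_0\) with the one-dimensional stable manifold of the linearization at infinity. Concretely, decaying solutions on \([R_0,\infty)\) form a \(C^1\) family parametrized by one real parameter, via a contraction argument using the exponential decay \eqref{eq2.5a}. A \(k\)-node state is then a zero of a matching function \(G(\alpha,\beta,\eps)\in\R^2\). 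Nondegeneracy \eqref{eq:radial-nondegeneracy} says that the variation \(\partial_\alpha u\) does not decay, which is exactly invertibility of \(D_{(\alpha,\beta)}G\). The implicit function theorem then yields a \(C^1\) curve \(\eps\mapsto(\alpha(\eps),\beta(\eps))\) near \(\eps_0\), one-sided at \(0\). The corresponding solutions keep exactly \(k\) simple nodes, since simple zeros persist under \(C^1\) perturbation on compacts and no zeros appear in the tail. To get \(C^1\) dependence into \(C_{0,\mathrm{rad}}\), I would control the sup norm on \([0,R_0]\) by ODE dependence and on \([R_0,\infty)\) by the uniform exponential bounds of the stable-manifold construction.

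\textbf{Step 2: globalization.} By uniqueness for every \(\eps\ge0\), the local branches must agree with \(u_{k,\eps}\), so the map \eqref{eq:global-branch} is well defined and locally \(C^1\) everywhere, hence globally \(C^1\) on \([0,\infty)\).

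\textbf{Step 3: continuity into \(H^1\) and \(C^1([0,A])\).} Take \(\eps_j\to\eps\). By Theorem~\ref{thm:compactness} with \(E_*=\sup\eps_j\), every subsequence has a further subsequence converging in \(H^1\cap C_0\) and in \(C^1([0,A])\) to a \(k\)-node state with coefficient \(\eps\). By uniqueness this limit is \(u_{k,\eps}\), so the whole sequence converges. Taking \(\eps_j\downarrow0\) gives the stated limits.

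The main obstacle is Step 1: translating \(H^1_{\mathrm{rad}}\)-nondegeneracy into invertibility of the finite-dimensional matching map, and keeping the \(C_0\)-valued derivative uniform on the unbounded tail. I would handle both with the exponential dichotomy of \(w''+\frac{n-1}{r}w'-w=0\) perturbed by terms of size \(O(|u|^{q-1})\), which decay exponentially.
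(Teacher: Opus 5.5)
Your proposal is correct. Steps 2 and 3 are exactly the paper's argument. The local branches are patched together using uniqueness. Continuity into $H^1(\R^n)$ and $C^1([0,A])$ comes from Theorem~\ref{thm:compactness}, with every subsequential limit identified by uniqueness.

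The difference is in Step 1. The paper never reduces to a finite-dimensional shooting problem.
\begin{itemize}
\item It applies the implicit function theorem directly in $X=\Xrad$ to $\mathcal T(\eps,u)=u-(-\Delta+1)^{-1}\bigl(|u|^{p-1}u-\eps|u|^{q-1}u\bigr)$.
\item It shows that the resolvent composed with multiplication by the linearized potential is compact on $X$. Hence $D_u\mathcal T$ is Fredholm of index zero, and its kernel coincides with the radial $H^1$ kernel (Lemma~\ref{lem6.1}).
\item It then preserves the nodal number through an exterior maximum-principle argument on the tail (Lemma~\ref{lem6.2}).
\end{itemize}
That route gives $C^1$ dependence into $C_0$ directly from the abstract theorem. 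It also gives local uniqueness in $X$, which the paper uses when counting solutions in the proof of Theorem~\ref{thm:uniqueness}. Its cost is the compactness and kernel-identification lemma.

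Your matching formulation makes invertibility of the $2\times2$ Jacobian literally equivalent to the non-decay $|v|\to\infty$ proved in Theorem~\ref{the5.1}, so no elliptic regularity or kernel identification is needed. In exchange, you must construct the one-parameter family of decaying tail solutions with $C^1$ dependence on $(\beta,\eps)$ in an exponentially weighted sup norm. You must also glue the $\eps$-derivative across your matching radius to obtain $C^1$ dependence into $C_0$; this works because the matching identity holds along the branch.

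One detail you should state explicitly is why the tails $w(\cdot;\beta,\eps)$ have no zeros. Either of the following would do:
\begin{itemize}
\item Show that the contraction yields $w=\beta\,\frac{K(r)}{K(R_0)}\,(1+o(1))$ uniformly on $[R_0,\infty)$, where $K>0$ is the decaying solution of $K''+\frac{n-1}{r}K'-K=0$.
\item Invoke the exterior maximum principle as in Lemma~\ref{lem6.2}.
\end{itemize}
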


\section{Compactness at a fixed nodal number}

In this section we prove the uniform estimates and compactness stated in Theorem \ref{thm:compactness}. We first record the energy properties and decay of individual bound states. A rescaling argument then bounds their central amplitudes when the parameter and nodal number are bounded. Finally, we exclude peaks escaping to infinity; this gives uniform tail control and allows the nodal number to be preserved under convergence.

For $\eps\ge0$, write
\begin{equation}{\label{eq2.1}}
 f_\eps(s)=-s-\eps|s|^{q-1}s+|s|^{p-1}s,~~
 F_\eps(s)=-\frac{s^2}{2}-\frac{\eps}{q+1}|s|^{q+1}+\frac{1}{p+1}|s|^{p+1}.
\end{equation}
The radial initial value problem is
$$
 u''+\frac{n-1}{r}u'+f_\eps(u)=0,~~u(0)=\alpha>0,~~u'(0)=0.
 \eqnum{eq2.2}
$$
At the origin its regular interpretation is
$$
 u'(r)=-r^{1-n}\int_0^r t^{n-1}f_\eps(u(t))\dd t,
~~u''(0)=-\frac{f_\eps(\alpha)}{n}.
 \eqnum{eq2.3}
$$
The integral equation gives local existence, uniqueness, and $C^1$ dependence on $\alpha$ and $\eps$ on bounded intervals. Since $p,q>1$, $f_\eps$ is $C^1$ even at zero. The primitive is bounded below and tends to $+\infty$ as $|s|\to\infty$. The energy identity below therefore also implies global existence for the initial value problem.

\subsection{Energy and individual tails}

We begin with estimates for individual bound states. The radial energy identity controls their amplitudes and critical points, and it also determines the behaviour of the final nodal interval. These properties provide the starting point for the uniform estimates developed later in this section.

\begin{lemma}\label{lem2.1}
The function $f_\eps$ has exactly one positive zero $b_\eps\ge1$, and $F_\eps$ has exactly one positive zero $\beta_\eps>b_\eps$. Every normalized bound state satisfies
$$
 \alpha>\beta_\eps\ge\beta_0
 =\biggl(\frac{p+1}{2}\biggr)^{1/(p-1)}>1,
 \qquad |u(r)|<\alpha\quad(r>0).
 \eqnum{eq2.4}
$$
Its energy
$$
 E(r)=\frac{u'(r)^2}{2}+F_\eps(u(r)),\qquad
 E'(r)=-\frac{n-1}{r}u'(r)^2
 \eqnum{eq2.5}
$$
is positive at every finite $r$ and tends to zero at infinity. Every nonzero critical point satisfies $|u|>\beta_\eps$. Each nodal domain other than the first contains exactly one critical point, a strict maximum on a positive domain and a strict minimum on a negative domain. The first domain starts with a strictly decreasing profile. There are finitely many nodes, and $u$ and $u'$ decay exponentially.
\end{lemma}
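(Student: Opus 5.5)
The proof is a phase-plane argument built on the energy identity \eqref{eq2.5}, carried out in four steps.

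\emph{Zeros of $f_\eps$ and $F_\eps$.} For $s>0$ write $f_\eps(s)=s\,g(s)$ with $g(s)=-1-\eps s^{q-1}+s^{p-1}$. Since $p>q$, the function $s^{p-1}-\eps s^{q-1}$ is negative, then increasing to $+\infty$, after its unique positive zero; hence $g$ has exactly one positive zero $b_\eps$. At $s=1$ we have $g(1)=-\eps\le0$, so $b_\eps\ge1$. Similarly $F_\eps(s)=s^2h(s)$ with $h(s)=-\tfrac12-\tfrac{\eps}{q+1}s^{q-1}+\tfrac{1}{p+1}s^{p-1}$, which has a unique positive zero $\beta_\eps$ by the same argument. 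Because $F_\eps'=f_\eps<0$ on $(0,b_\eps)$ and $F_\eps(0)=0$, we get $\beta_\eps>b_\eps$. Finally, $h$ is decreasing in $\eps$, so $\beta_\eps\ge\beta_0=((p+1)/2)^{1/(p-1)}>1$.

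\emph{Energy facts.} The identity $E'=-\frac{n-1}{r}u'^2$ is the standard computation from \eqref{eq2.2}. Since $E'\le0$, $E$ is nonincreasing, so $E(r)\to E_\infty$. For a bound state $u\to0$, hence $F_\eps(u)\to0$. If $E_\infty>0$, then $u'^2\to2E_\infty$, which is impossible when $u\to0$. So $E\to0$. If $E(r_0)\le0$ at some finite $r_0$, then $E<0$ afterwards: $E'<0$ except where $u'=0$, and $u'\equiv0$ on an interval would force $f_\eps(u)=0$, i.e.\ $u$ constant equal to $0$ or $\pm b_\eps$, which uniqueness for the ODE rules out except for the trivial solution. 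Thus $E<0$ afterwards, contradicting $E\to0$, and therefore $E>0$ for every finite $r$.

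Consequences follow directly. At $r=0$ we have $F_\eps(\alpha)=E(0)>E(r)\ge F_\eps(u(r))$. From $E(0)>0$ we get $\alpha>\beta_\eps$. At a critical point $F_\eps(u)=E>0$, which forces $|u|>\beta_\eps$ (using evenness of $F_\eps$). For $|u(r)|<\alpha$: if $|u(r)|\ge\alpha$ at some $r>0$, then $F_\eps$ increasing on $[\beta_\eps,\infty)$ gives $F_\eps(u(r))\ge F_\eps(\alpha)=E(0)>E(r)$, a contradiction.

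\emph{Nodal structure.} On the first domain, $u''(0)=-f_\eps(\alpha)/n<0$ because $\alpha>b_\eps$, so the profile starts decreasing. Interior critical points $r_0>0$ are nondegenerate: $u''(r_0)=-f_\eps(u(r_0))$, and $|u(r_0)|>\beta_\eps>b_\eps$ gives a strict maximum on positive domains and a strict minimum on negative ones. Two strict maxima in one positive domain would require a local minimum between them, whose value is positive and hence would satisfy $u>\beta_\eps$, making it a maximum, a contradiction. Any bounded nodal domain other than the first contains at least one critical point by Rolle, so it contains exactly one. For the last, unbounded domain, take say $u>0$. If $u$ had no critical point on it, $u$ would be monotone; coming from a zero with $u'>0$ it would increase, contradicting $u\to0$. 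So it contains a maximum, and after that maximum there are no further critical points, again by the previous argument.

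\emph{Finitely many nodes and exponential decay.} This is the main obstacle. The key input is that critical values satisfy $|u|>\beta_\eps$ while $u\to0$: only finitely many critical points can occur, and since every nodal domain after the first contains one, there are finitely many nodes. Beyond the last critical point, $u$ is monotone to $0$ with $0<|u|<b_\eps$ eventually, so $uf_\eps(u)\le-(1-\delta)u^2$ for large $r$. Setting $v=u^2$, one obtains $v''+\frac{n-1}{r}v'\ge2(1-\delta)v$, and a comparison with $e^{-\sqrt{1-\delta}\,r}$ via the maximum principle on $[R,\infty)$ gives exponential decay of $u$. For $u'$, use \eqref{eq2.3}, or integrate $(r^{n-1}u')'=-r^{n-1}f_\eps(u)$ from $r$ to $\infty$, noting that $r^{n-1}u'\to0$; this transfers the exponential bound to $u'$.
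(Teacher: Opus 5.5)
Everything except the last sentence is sound and follows the paper's outline closely. That includes the energy argument for $E>0$, $\alpha>\beta_\eps$ and $|u|<\alpha$, the critical-amplitude bound, and the nodal structure. Your derivation of $E\to0$ from the existence of $\lim E$ and $u'^2=2E-2F_\eps(u)$ is slicker than the paper's uniform-continuity argument. You should add that $E_\infty<0$ is excluded because $E\ge F_\eps(u)\to0$.

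The gap is the decay of $u'$. You write ``noting that $r^{n-1}u'\to0$'' with no justification, and this is exactly the nontrivial point. Exponential decay of $u$ only makes $\int^\infty t^{n-1}f_\eps(u)\dd t$ converge. So the flux has \emph{some} finite limit $L$, and $r^{n-1}u'(r)=L+\int_r^\infty t^{n-1}f_\eps(u)\dd t$. If $L\neq0$ this gives $u'\sim Lr^{1-n}$, not exponential decay. Citing \eqref{eq2.3} does not get around this. It only gives $r^{n-1}u'(r)\to-\int_0^\infty t^{n-1}f_\eps(u)\dd t$, and the vanishing of that integral is the same claim.

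The fix is the paper's argument. On the final tail, say $u>0$, $u'<0$, $u<b_\eps$, one has $(r^{n-1}u')'=-r^{n-1}f_\eps(u)>0$. So the flux is increasing and negative, converges to $L\le0$, and satisfies $r^{n-1}u'\le L$. If $L<0$, then $u(r)=\int_r^\infty|u'|\dd t\ge |L|r^{2-n}/(n-2)$. This contradicts the exponential decay of $u$ you already proved, and it uses $n\ge3$. Hence $L=0$, and the tail integral gives the exponential bound for $u'$.

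Your maximum-principle comparison for $u^2$ is a valid but roundabout route to the decay of $u$. The paper instead combines $E>0$ with $-2F_\eps(s)\ge ms^2$ for $|s|\le1$ to get $|u'|\ge\sqrt m\,|u|$ on the monotone tail. That yields decay in one line with the explicit rate $\sqrt m$, and the same inequality is reused as \eqref{eq2.12} for the uniform tail bounds in Lemma~\ref{lem2.4}.
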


\begin{proof}
For $s>0$, the equation $f_\eps(s)=0$ is equivalent to $
 s^{p-q}-s^{1-q}=\eps$.
The left side is strictly increasing from $-\infty$ to $+\infty$ and is zero at $s=1$. Thus $f_\eps<0$ on $(0,b_\eps)$ and $f_\eps>0$ on $(b_\eps,\infty)$. The primitive decreases and then increases, with $F_\eps(b_\eps)<0$ and $F_\eps(s)\to\infty$. This proves the assertion about $\beta_\eps$. Since $F_\eps\le F_0$, its positive zero is at least $\beta_0$.

For a bound state, $u(r)\to0$. The energy identity and a lower bound for $F_\eps$ give a global bound for $u'$. The equation then bounds $u''$ on $[1,\infty)$. Since a convergent function with uniformly continuous derivative has derivative tending to zero, $u'(r)\to0$. To see the last assertion directly, a sequence on which $|u'|$ stays above a fixed positive number would, by uniform continuity, give intervals of a fixed length on which $u'$ has the same sign and is bounded away from zero; the resulting change in $u$ contradicts its convergence. Consequently $E(\infty)=0$.

The energy decreases strictly between any two distinct finite radii. Equality would force $u'\equiv0$ on an interval, and hence a constant solution by uniqueness; a nonzero constant cannot decay. Thus $E(r)>0$, including $E(0)=F_\eps(\alpha)>0$. If $|u(r)|\ge\alpha$ for some $r>0$, the strict increase of $F_\eps$ on $[\beta_\eps,\infty)$ would give
$$
 F_\eps(u(r))\ge F_\eps(\alpha)=E(0)>E(r),
$$
which is impossible. At a critical point, $F_\eps(u)=E>0$, so $|u|>\beta_\eps$ and $u''=-f_\eps(u)$ has the opposite sign to $u$. Two positive maxima in one positive domain would require an intervening critical point that is not a strict maximum. The negative case is identical. A node cannot be double by uniqueness for the equation at any positive radius. The behaviour at the origin follows from~\eqref{eq2.3}.

Because $u\to0$, beyond some radius all amplitudes are smaller than $\beta_\eps$. A node there would require a subsequent nonzero critical point before the solution could return to zero, contradicting the critical amplitude bound. Hence the number of nodes is finite and the final tail has a constant sign, with $|u|$ strictly decreasing. For $|s|\le1$,
$$
 -2F_\eps(s)\ge m s^2,
 \qquad m:=\frac{p-1}{p+1}>0.
 \eqnum{eq2.6}
$$
By positivity of $E$, $|u'|\ge\sqrt m\,|u|$ on the small tail. This gives exponential decay of $u$. Also $|f_\eps(u)|\le C|u|$ there, so the flux $r^{n-1}u'(r)$ has a finite limit. If that limit were nonzero, integrating $u'$ would give a nonzero $r^{2-n}$ leading term, contradicting exponential decay. Its limit is therefore zero, and
$$
 u'(r)=r^{1-n}\int_r^\infty t^{n-1}f_\eps(u(t))\dd t.
 \eqnum{eq2.7}
$$
This proves exponential decay of $u'$ as well. The equation gives the same decay for $u''$, away from the origin. In particular $u\in H^1(\R^n)$.
\end{proof}

\subsection{The oscillatory blow-up limit}

We next establish a uniform bound on the central amplitudes of bound states with bounded nodal number. If these amplitudes were unbounded, rescaling near the origin would lead to a limiting pure-power equation. The oscillation of its solution provides the contradiction needed for the amplitude bound.

\begin{lemma}\label{lem2.2}
Let $n\ge3$ and $1<p<\ps$. The solution
$$
 V''+\frac{n-1}{r}V'+|V|^{p-1}V=0,
 \qquad V(0)=1,\quad V'(0)=0
 \eqnum{eq2.8}
$$
exists globally and has infinitely many simple positive zeros.
\end{lemma}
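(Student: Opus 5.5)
Global existence and regularity come from the energy $\mathcal E(r)=\tfrac12 V'(r)^2+\tfrac1{p+1}|V(r)|^{p+1}$. It satisfies $\mathcal E'(r)=-\tfrac{n-1}{r}V'^2\le0$. So $|V|$ and $|V'|$ stay bounded by constants depending only on $\mathcal E(0)=1/(p+1)$, and the integral formulation as in \eqref{eq2.3} extends to all of $[0,\infty)$. Local uniqueness at positive radii then shows that every zero is simple: a double zero would force $V\equiv0$, contradicting $V(0)=1$. The real content is therefore that the zeros are infinite in number. I would establish this in two steps: first that $\mathcal E(r)\to0$, and second that a solution with finitely many zeros cannot do this subcritically.

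\textbf{Step 1: no eventual constant sign unless $V\to0$ in a special way.} Suppose $V>0$ on $(r_0,\infty)$; the negative case is symmetric by oddness. Since $(r^{n-1}V')'=-r^{n-1}V^p<0$, the function $V$ is eventually monotone and converges to some $\ell\ge0$. If $\ell>0$, then $r^{n-1}V'\le -c\,r^n$, so $V'\le -cr$, which contradicts boundedness. Hence $\ell=0$ and $V$ is a positive, decreasing, decaying solution of $-\Delta V=V^p$ outside a ball. In particular $\mathcal E(r)\to0$ along this tail, since $V'\to0$ as in the proof of Lemma~\ref{lem2.1}.

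\textbf{Step 2: Pohozaev contradiction.} I would apply the radial Pohozaev identity. For the multiplier $r^nV'+\tfrac{n-2}{2}r^{n-1}V$ one has
\[
\frac{d}{dr}\Bigl[r^n\mathcal E(r)+\tfrac{n-2}{2}r^{n-1}VV'\Bigr]
=\Bigl(\tfrac{n}{p+1}-\tfrac{n-2}{2}\Bigr)r^{n-1}|V|^{p+1}.
\]
The coefficient $\tfrac{n}{p+1}-\tfrac{n-2}{2}$ is strictly positive exactly because $p<\ps$. The bracket vanishes at $r=0$, so
\[
P(r):=r^n\mathcal E(r)+\tfrac{n-2}{2}r^{n-1}VV'
\]
is strictly increasing and positive for $r>0$, with $P(r)\ge c_0>0$ for $r\ge1$.

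In the eventual-sign scenario I then need decay rates showing $P(r)\to0$, which gives the contradiction. From $(r^{n-1}V')'=-r^{n-1}V^p$ and positivity, $r^{n-1}V'$ decreases, so $V'\le -c r^{1-n}$ and $V\ge c r^{2-n}$. Feeding this back gives the improved bound $V(r)\le Cr^{-2/(p-1)}$, which is the standard Serrin-type estimate for positive supersolutions of $-\Delta V\ge V^p$ in an exterior domain. Comparing with the lower bound $V\ge cr^{2-n}$ forces $p>n/(n-2)$; in the case $p\le n/(n-2)$ the same comparison already rules out positive exterior solutions directly. When $n/(n-2)<p<\ps$, I would show $V\sim c\,r^{2-n}$, i.e.\ $r^{n-1}V'$ has a finite nonzero limit, using integrability of $r^{n-1}V^p$, which holds since $p(n-2)>n$. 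With this, $r^nV'^2\sim r^{2-n}\to0$, $r^nV^{p+1}\to0$ (because $(p+1)(n-2)>n$), and $r^{n-1}VV'\sim r^{2-n}\to0$. Hence $P(r)\to0$, contradicting $P\ge c_0$. So $V$ changes sign infinitely often.

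The step I expect to be the main obstacle is the tail asymptotics in Step 2: the exterior Liouville estimates. In particular, $V\le Cr^{-2/(p-1)}$ and the limit of the flux $r^{n-1}V'$ need careful integration of the radial ODE. In the borderline case $p=n/(n-2)$ I would first try the averaging or test-function argument on $(r^{n-1}V')'=-r^{n-1}V^p$ to get the contradiction directly.
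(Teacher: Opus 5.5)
Your overall architecture matches the paper's. The energy gives global existence and simple zeros. In the eventual-sign scenario the Pohozaev function $P$ (the paper's $\mathcal P$) is positive and increasing from $P(0)=0$, so it suffices to show $P(r)\to0$. The gap is exactly in that last step.

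You propose to prove $V\sim c\,r^{2-n}$ from ``integrability of $r^{n-1}V^p$, which holds since $p(n-2)>n$''. That integrability presupposes $V=O(r^{2-n})$, which is the conclusion you want. The only a priori upper bound available is $V\le Cr^{-a}$ with $a=2/(p-1)$. For $p>n/(n-2)$ one has $a<n-2$, so this gives only $r^{n-1}V^p\le Cr^{n-3-a}$ with $n-3-a>-1$, which is not integrable. Reinserting it into the flux identity returns $|V'|\le C(r^{1-n}+r^{-a-1})$ and nothing better, so there is no bootstrap. Fast decay does in fact hold in this scenario, but proving it needs a separate argument, for instance a Lyapunov analysis in Emden--Fowler variables $V=r^{-a}w(\log r)$, which itself rests on $2a>n-2$.

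The borderline case is also left open. At $p=n/(n-2)$ one has $a=n-2$, so $c\,r^{2-n}\le V\le Cr^{-a}$ is perfectly consistent. This contradicts your statement that the comparison rules out every $p\le n/(n-2)$, and the ``averaging or test-function argument'' you defer to is not supplied. Note also that the bound $V\le Cr^{-a}$ does not come from feeding back the lower bound.

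The missing idea is that you need neither sharp asymptotics nor a case split: the crude upper bounds already force $P\to0$ because of subcriticality.
\begin{enumerate}
\item Integrate the flux over $[r/2,r]$ and use monotonicity of $V$ to get $-V'(r)\ge c_n rV(r)^p$. Then integrate $(V^{1-p})'\ge c\,r$ to obtain $V\le Cr^{-a}$.
\item Integrate the flux from a fixed $R$ to get $|V'(r)|\le Cr^{1-n}\bigl(1+\int_R^r t^{n-3-a}\dd t\bigr)$. This gives $|V'|\le C(r^{1-n}+r^{-a-1})$, $Cr^{1-n}(1+\log r)$, or $Cr^{1-n}$ according as $n-2-a$ is positive, zero, or negative.
\item Conclude that $r^nV^{p+1}\le Cr^{n-2-2a}$. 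Also $r^nV'^2$ and $r^{n-1}|VV'|$ are bounded by combinations of $r^{2-n}$, $r^{-a}$ and $r^{n-2-2a}$, with at most logarithmic factors. All of these tend to zero because $n>2$ and because $2a>n-2$ is exactly the condition $p<\ps$.
\end{enumerate}
This handles every $1<p<\ps$ at once, including $p=n/(n-2)$, and it is the paper's argument.
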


\begin{proof}
The energy $V'^2/2+|V|^{p+1}/(p+1)$ is nonincreasing. It bounds $V$ and $V'$, so the solution is global. A double zero would imply $V\equiv0$, which is excluded by the initial condition.

Suppose there are only finitely many zeros. Reversing sign if necessary, $V>0$ on a final interval. Since
$$
 (r^{n-1}V')'=-r^{n-1}V^p<0,
$$
the derivative must eventually be negative. Otherwise the profile would be nondecreasing with a positive lower bound and the same flux identity would force $V'<0$. The positive decreasing limit is zero, again by the flux identity. Set $a=2/(p-1)$. At large $r$, integrating on $[r/2,r]$ gives
$$
 -V'(r)\ge c_n rV(r)^p.
$$
Integrating the resulting inequality for $V^{1-p}$ yields $V(r)\le Cr^{-a}$. Integrating the flux from a fixed radius $R$ also gives
$$
 |V'(r)|\le Cr^{1-n}\biggl(1+\int_R^r t^{n-1-ap}\dd t\biggr).
 \eqnum{eq2.9}
$$
Since $ap=a+2$, the right side is bounded respectively by
$$
 \begin{cases}
 C(r^{1-n}+r^{-a-1}),&n-2-a>0,\\
 Cr^{1-n}(1+\log r),&n-2-a=0,\\
 Cr^{1-n},&n-2-a<0.
 \end{cases}
$$
Define
$$
 \mathcal P(r)=r^n\biggl(\frac{V'^2}{2}
                         +\frac{|V|^{p+1}}{p+1}\biggr)
               +\frac{n-2}{2}r^{n-1}VV'.
$$
Differentiation gives
$$
 \mathcal P'(r)=\biggl(\frac{n}{p+1}-\frac{n-2}{2}\biggr)
                  r^{n-1}|V|^{p+1}.
 \eqnum{eq2.10}
$$
The coefficient is positive and $\mathcal P(0)=0$, so $\mathcal P(r)>0$ and is strictly increasing for $r>0$. Subcriticality is equivalent to $2a>n-2$. The estimates above imply
$$
 r^n|V|^{p+1}\to0,\qquad r^nV'^2\to0,
 \qquad r^{n-1}VV'\to0.
$$
For the first expression the exponent is $n-2-2a<0$. For the derivative expressions, the polynomial terms decay because $n>2$ and $2a>n-2$; in the borderline case $r^{2-n}(1+\log r)^2\to0$. Thus $\mathcal P(r)\to0$, a contradiction.
\end{proof}

\begin{proposition}\label{pro2.1}
Fix $E_*<\infty$ and $K\ge0$. The normalized bound states with $0\le\eps\le E_*$ and at most $K$ nodes satisfy
$$
 \beta_0<u(0)\le A,
 \qquad \|u\|_\infty+\|u'\|_\infty\le C
 \eqnum{eq2.11}
$$
for constants $A,C$ depending only on $n,p,q,E_*,K$.
\end{proposition}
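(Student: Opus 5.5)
The lower bound $u(0)>\beta_0$ is already contained in Lemma~\ref{lem2.1}. Once $\alpha=u(0)$ is bounded, the remaining bounds in \eqref{eq2.11} are also immediate. Lemma~\ref{lem2.1} gives $|u|<\alpha\le A$. The energy is decreasing and positive, so $u'^2/2\le E(0)-F_\eps(u)\le F_\eps(\alpha)-\min F_\eps$. Since $F_\eps\ge F_{E_*}$ and $F_\eps\le F_0$, this is bounded in terms of $A,p,q,E_*$. The whole work is therefore the upper bound on $\alpha$, which I would prove by contradiction via the blow-up limit of Lemma~\ref{lem2.2}.

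Suppose there are states $(\eps_j,u_j)$ with $\eps_j\in[0,E_*]$, at most $K$ nodes, and $\alpha_j=u_j(0)\to\infty$. Set $a=2/(p-1)$ and rescale:
$$
V_j(s)=\alpha_j^{-1}u_j\bigl(\alpha_j^{-1/a}s\bigr),
$$
so that $\alpha_j^{-1/a}=\alpha_j^{-(p-1)/2}$. A direct computation gives
$$
V_j''+\frac{n-1}{s}V_j'+|V_j|^{p-1}V_j-\alpha_j^{1-p}V_j-\eps_j\alpha_j^{q-p}|V_j|^{q-1}V_j=0,
\qquad V_j(0)=1,\ V_j'(0)=0.
$$
Because $q<p$ and $\eps_j\le E_*$, both perturbation coefficients tend to zero. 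The nonlinearity is a perturbation of $|V|^{p-1}V$ that converges locally uniformly together with its derivative on bounded sets. We also have the uniform bound $|V_j|\le1$, and the analogous rescaled energy bounds $V_j'$.

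By continuous dependence for the integral formulation \eqref{eq2.3}, $V_j\to V$ in $C^1([0,S])$ for every $S$, where $V$ is the solution of \eqref{eq2.8}. By Lemma~\ref{lem2.2}, $V$ has at least $K+1$ simple zeros in some interval $(0,S)$. Simplicity means $V$ changes sign strictly at each of them. Uniform convergence on $[0,S]$ then forces $V_j$, and hence $u_j$, to have at least $K+1$ sign changes for large $j$, contradicting the assumption of at most $K$ nodes. This yields $A$, depending only on $n,p,q,E_*,K$. Uniformity over $\eps\in[0,E_*]$ is automatic, since any bad sequence would produce the contradiction.

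The only delicate step is justifying the $C^1$ convergence near $s=0$, where the equation is singular. I would handle it through the integral equation $V'(s)=-s^{1-n}\int_0^s t^{n-1}g_j(V)\,dt$. A Gronwall estimate on $[0,S]$ bounds $\sup|V_j-V|+\sup|V_j'-V'|$ by the sup of the nonlinearity perturbation on $[-1,1]$, which is $O(\alpha_j^{1-p}+E_*\alpha_j^{q-p})\to0$. No compactness argument is needed, since the limit problem has a unique solution.
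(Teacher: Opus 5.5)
Your proposal is correct and follows the paper's proof essentially step for step: blow-up rescaling to the pure-power problem, convergence to the oscillating profile of Lemma~\ref{lem2.2}, transfer of $K+1$ simple zeros for the contradiction, then Lemma~\ref{lem2.1} for $\|u\|_\infty$ and the energy inequality for $u'$. Your Gronwall estimate on the integral equation fills in the continuous-dependence step that the paper states only briefly; it works because $|V_j|,|V|\le1$ and the nonlinearities are uniformly Lipschitz on $[-1,1]$.
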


\begin{proof}
If $\alpha_j=u_j(0)\to\infty$, rescale by
$$
 V_j(t)=\alpha_j^{-1}u_j(\alpha_j^{-(p-1)/2}t).
$$
Then $V_j(0)=1$, $V_j'(0)=0$, and
$$
 V_j''+\frac{n-1}{t}V_j'
 -\alpha_j^{1-p}V_j
 -\eps_j\alpha_j^{q-p}|V_j|^{q-1}V_j
 +|V_j|^{p-1}V_j=0.
$$
Both lower-order coefficients tend to zero. Continuous dependence in the integral equation gives $V_j\to V$ in $C^1$ on every bounded interval, where $V$ is the solution of~\eqref{eq2.8}. Choose an interval containing its first $K+1$ zeros. Their simplicity provides opposite signs on either side of each zero, so $V_j$ also has at least $K+1$ zeros for large $j$, a contradiction. Lemma~\ref{lem2.1} now bounds $\|u\|_\infty$. Finally,
$$
 \frac{u'^2}{2}\le F_\eps(\alpha)
 -\min_{|s|\le A,\ 0\le\eta\le E_*}F_\eta(s)
$$
bounds $u'$ uniformly.
\end{proof}

\subsection{Local limits and exclusion of distant peaks}
In this subsection, we prove that local limits of bound states retain the prescribed nodal structure. The main step is to exclude peaks at radii tending to infinity, which yields a uniform small-amplitude bound on the tails.

\begin{lemma}\label{lem2.3}
Every sequence in Proposition~\ref{pro2.1} has a subsequence such that $\eps_j\to\bar\eps$, $u_j(0)\to\bar\alpha\ge\beta_0$, and $u_j\to U$ in $C^1$ on bounded radial intervals. The limit is nonzero, has at most $K$ nodes, and tends monotonically to zero on its final sign interval.
\end{lemma}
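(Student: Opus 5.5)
The plan is to extract the limit by compactness and continuous dependence, then read off its properties from passing inequalities to the limit.

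\textbf{Extracting the limit.} By Proposition~\ref{pro2.1}, $\alpha_j=u_j(0)\in(\beta_0,A]$, and $\eps_j\in[0,E_*]$. After a subsequence we have $\eps_j\to\bar\eps$ and $\alpha_j\to\bar\alpha\in[\beta_0,A]$. Let $U$ be the solution of \eqref{eq2.2} with data $(\bar\alpha,\bar\eps)$. The integral formulation \eqref{eq2.3} gives $C^1$ dependence on $(\alpha,\eps)$ on bounded intervals. Hence $u_j\to U$ in $C^1([0,A'])$ for every $A'$, and a diagonal argument covers all bounded intervals at once. Since $U(0)=\bar\alpha>0$, the limit is nonzero.

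\textbf{Node count.} First, $U$ has no double zeros, by uniqueness for the initial value problem at positive radii. Suppose $U$ had $K+1$ nodes in some $[0,A']$. Each is simple, so $U$ takes opposite signs just before and just after it. Uniform convergence then forces $u_j$ to have at least $K+1$ sign changes for large $j$, a contradiction. Thus $U$ has at most $K$ nodes, and after the last node $U$ keeps a constant sign, say positive after the reflection $U\mapsto -U$.

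\textbf{Energy passes to the limit.} For every $j$ and $r$, Lemma~\ref{lem2.1} gives $E_j(r)>0$ and $|u_j(r)|<\alpha_j$. Passing to the limit pointwise yields
$$
\frac{U'(r)^2}{2}+F_{\bar\eps}(U(r))\ge0,\qquad |U(r)|\le\bar\alpha .
$$
The energy of $U$ is nonincreasing by \eqref{eq2.5}. It is also strictly decreasing unless $U$ is constant, and a constant is impossible because $U$ is nonzero and $E_U\ge0$ rules out the constants $b_{\bar\eps}$ and $0<|U|<b$. Strict decrease together with $E_U\ge0$ shows that any critical point $r_0>0$ satisfies $F_{\bar\eps}(U(r_0))=E_U(r_0)>\lim E_U\ge0$, where the strict inequality comes from the energy being strictly decreasing after $r_0$. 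Hence $|U(r_0)|>\beta_{\bar\eps}>b_{\bar\eps}$, and so $U''(r_0)$ has sign opposite to $U(r_0)$. Every critical point is therefore a strict maximum on positive intervals, so on the final positive interval $U$ has at most one critical point. It follows that $U$ is eventually monotone.

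\textbf{Main obstacle: the monotone limit is zero.} Let $\ell=\lim U\ge0$. Then $U'\to0$, as in the proof of Lemma~\ref{lem2.1}, using the bounded $U''$. The limit $\ell$ must be a zero of $f_{\bar\eps}$, since otherwise $(r^{n-1}U')'=-r^{n-1}f(U)$ forces $U'$ not to tend to zero. So $\ell\in\{0,b_{\bar\eps}\}$. To rule out $\ell=b_{\bar\eps}$, I would use $E_U(\infty)=F_{\bar\eps}(b_{\bar\eps})<0$, which contradicts $E_U\ge0$. Therefore $\ell=0$. Finally, monotonicity on the whole final interval holds because a positive maximum there would have to exceed $\beta_{\bar\eps}$ after the decay has begun. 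Any interior critical point must be a strict maximum, and once $U$ is decreasing after that maximum it cannot turn again. This gives that $U$ tends monotonically to zero on its final sign interval.
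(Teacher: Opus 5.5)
Your proposal is correct and follows the paper's argument essentially step for step: compactness of $(\eps_j,u_j(0))$ with continuous dependence, transfer of simple zeros to bound the nodes, the limiting inequality $E_U\ge0$ forcing every critical point above $\beta_{\bar\eps}$ (hence at most one per sign interval), and exclusion of $\ell=b_{\bar\eps}$ via $F_{\bar\eps}(b_{\bar\eps})<0$. The one wording to fix is your claim of monotonicity on the whole final interval: when that interval starts at a node, $U$ first rises to its unique maximum, so what your argument actually proves, and what the lemma means, is that $U$ decreases to zero after that maximum.
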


\begin{proof}
Compactness of the initial data and continuous dependence give the local convergence and the limiting equation. The initial lower bound excludes the zero solution. Every zero of $U$ is simple, so $K+1$ limiting zeros would transfer to $u_j$ and contradict its nodal bound. At every finite $r$,
$$
 E_U(r)=\lim_j E_{u_j}(r)\ge0.
$$
Thus every critical point has amplitude at least $\beta_{\bar\eps}>b_{\bar\eps}$ and is a strict extremum of the appropriate sign. There is at most one such point in each sign interval. The finitely many zeros consequently imply finitely many critical points.

On the final interval, reverse sign so that $U>0$. It is eventually monotone and bounded, hence tends to a finite $\ell\ge0$. Bounds for $U'$ and $U''$ give $U'\to0$ as in Lemma~\ref{lem2.1}. If $f_{\bar\eps}(\ell)\ne0$, the equation would make $U''$ have a fixed sign bounded away from zero at large radii, contradicting $U'\to0$. Therefore $\ell$ is zero or $b_{\bar\eps}$. The second alternative would give $E_U(\infty)=F_{\bar\eps}(b_{\bar\eps})<0$, contrary to $E_U\ge0$. Hence $\ell=0$, and the final monotone profile is decreasing.
\end{proof}

\begin{lemma}\label{lem2.4}
For every fixed $0<\delta<1/2$, there is a common radius $R\ge1$ such that all states in Proposition~\ref{pro2.1} satisfy $|u(r)|<\delta$ for $r\ge R$.
\end{lemma}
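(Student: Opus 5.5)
Argue by contradiction. If the statement fails for some $\delta\in(0,1/2)$, there are states $(\eps_j,u_j)$ as in Proposition~\ref{pro2.1} and radii $r_j\to\infty$ with $|u_j(r_j)|\ge\delta$. By Lemma~\ref{lem2.3}, after passing to a subsequence we may assume $\eps_j\to\bar\eps$ and $u_j\to U$ in $C^1$ on bounded intervals. Here $U$ is nonzero, has at most $K$ nodes, and decreases monotonically to zero on its final sign interval. Fix $R_0$ beyond the last zero and the last critical point of $U$ such that $0<|U(R_0)|<\eta$, where $\eta\ll\delta$ is chosen below. Also take $E_U(R_0)$ small; this is possible since $|U|,|U'|\to0$, as in Lemma~\ref{lem2.1}.

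The key quantity is the energy $E_{u_j}$. By local convergence, $E_{u_j}(R_0)\to E_U(R_0)$, which is small. The energy is nonincreasing and, by Lemma~\ref{lem2.1}, positive. Hence $E_{u_j}(r)\le 2E_U(R_0)$ for all $r\ge R_0$ and large $j$.

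On the other hand, $|u_j(r_j)|\ge\delta$ with $r_j>R_0$, while $|u_j(R_0)|<\eta$. Let $s_j\in(R_0,r_j]$ be the first radius with $|u_j(s_j)|=\delta$. I see two cases.

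\textbf{Case 1: $u_j$ has a critical point beyond $R_0$ before reaching the tail.} Every nonzero critical point satisfies $|u_j|>\beta_{\eps_j}\ge\beta_0>1$. Then $E_{u_j}=F_{\eps_j}(u_j)>0$ there, which by itself gives no contradiction. So I need something more: the energy at a peak of amplitude above $\beta_0$ is positive, but at the intermediate amplitude $b_{\eps_j}$ the potential satisfies $F_{\eps_j}(b_{\eps_j})<0$, bounded away from zero uniformly for $\eps\le E_*$. To climb from amplitude $\eta$ to amplitude $\beta_0$, the solution must pass through $b_{\eps_j}$ with kinetic energy $u'^2/2\ge -F_{\eps_j}(b_{\eps_j})\ge c_0>0$. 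Hence $E_{u_j}$ at that point is at least $c_0+F_{\eps_j}(b_{\eps_j})$, which is not a contradiction yet.

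Instead I use the dissipation $E'=-(n-1)u'^2/r$. A peak far out requires a downhill passage through the well followed by an uphill climb. To compare energies, note that the peak must reach amplitude $>\beta_0$, so $F\ge 0$ there, while at $R_0$ the energy is at most $2E_U(R_0)$. Then monotonicity gives $0<F_{\eps_j}(u_j(\text{peak}))\le E_{u_j}(\text{peak})\le 2E_U(R_0)$. The peak amplitude is therefore squeezed toward $\beta_{\eps_j}$, and the subsequent motion is a near-homoclinic excursion with energy close to zero.

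\textbf{Case 2: no critical point.} Then $|u_j|$ is monotone beyond $R_0$ within one sign interval and cannot reach $\delta>\eta$ after decreasing. If it increases from $R_0$, then $u_j'$ at $R_0$ has the sign opposite to $U'(R_0)$, contradicting $C^1$ convergence provided $|U'(R_0)|>0$. So Case 2 is excluded, and the work is in Case 1.

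To close Case 1, which is the main obstacle, I estimate the energy dissipated by a distant outgoing peak. Let $t_j$ be the time of the peak after $R_0$, with $t_j\to\infty$ since the local limit $U$ has no such peak. The passage from amplitude $\delta$ up to $\beta_0$ and back down to $\delta$ occurs over a time interval of bounded length and with $|u'|$ bounded below on a set of definite measure. This follows from the uniform bounds on $u_j'$ and $u_j''$ from Proposition~\ref{pro2.1}, together with the fact that $|u'|^2\ge 2(E-F)\ge c$ on a subinterval where $F\le -c$. The dissipation over this excursion is therefore about $\int u'^2/r\approx c/t_j$, which is small, so this gives no contradiction directly.

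I would therefore use instead that, at energy $\le 2E_U(R_0)\to0$ in the autonomous limit, the excursion is close to the homoclinic of $u''+f(u)=0$. After the peak, the solution must descend back into the well through $b$ with energy still positive, and then it must either return to another peak or die. The number of such excursions is bounded by $K$ nodes, but each node costs one. The real contradiction comes from the energy balance. Between $R_0$ and the peak at $t_j$, a phase running backward from the peak, approximately along the homoclinic, reaches amplitude $\eta$ only after a long time, and along it $u'\approx -\sqrt{-2F(u)}$. Comparing with $u_j(R_0)\approx U(R_0)$, $u_j'(R_0)\approx U'(R_0)$, the sign of $u'$ at $R_0$ along such an incoming arc is opposite to that of $U'(R_0)$, since $U$ is decaying outward while $u_j$ is moving toward the peak. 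This contradicts $C^1$ convergence at $R_0$. I expect making this sign argument rigorous near the saddle $0$, where $u_j$ may linger, to be the hardest part. I would handle it by noting that the trajectory in the small-amplitude region $|u|<\eta$ follows the linearized saddle $u''\approx u$, where $E\approx(u'^2-u^2)/2$ small but positive forces $|u'|>|u|$, so $u'$ cannot change sign without $|u|$ exceeding $\eta$.
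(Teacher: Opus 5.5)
Your Case~1 argument has a genuine gap, and it lies exactly in the scenario the lemma has to rule out. The closing sign argument implicitly assumes that the distant peak has the same sign as $U(R_0)$, i.e.\ that $u_j$ does not vanish between $R_0$ and the peak. But Lemma~\ref{lem2.3} only says that $U$ has \emph{at most} $K$ nodes, so a node of $u_j$ may escape to infinity. In that case $u_j$ follows $U$ towards zero beyond $R_0$, with $u_j'(R_0)$ of the same sign as $U'(R_0)$. It then lingers near the saddle for a time tending to infinity, crosses zero once, and climbs to a distant peak of the opposite sign. Your final observation, that positive energy forces $u'^2\ge 2E+mu^2>0$ for small $|u|$ so $u'$ cannot change sign there, does not exclude this. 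It shows that the trajectory passes monotonically through zero, which is precisely this scenario, and then there is no conflict with $C^1$ convergence at $R_0$. The same-side case, which your sign argument does handle, is the easy one.

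The missing idea is quantitative, and you had its first half before discarding it. Fix $R$ so that $|U|<\delta/2$ on $[R,\infty)$, and let $T_j\to\infty$ be the first radius after $R$ with $|u_j|=\delta$. The outgoing passage from $\delta$ to $2\delta$ takes time at most $D=\log2/\sqrt m$ by~\eqref{eq2.12}. Over that passage the energy dissipated is at least $d/(T_j+D)$, where $d=3(n-1)\sqrt m\,\delta^2/2$. Because $E$ is nonincreasing, this is a \emph{lower} bound for $E_{u_j}$ on the whole of $[R,T_j]$. This lower bound, not your upper bound $2E_U(R_0)$, is the useful direction. It gives $|u_j'|\ge\sqrt{2d/(T_j+D)+mu_j^2}$ on $[R,T_j]$, so $u_j$ is monotone there and crosses zero at most once. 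Integrating the reciprocal speed over $[-\delta,\delta]$ then gives $T_j-R\le\frac{2}{\sqrt m}\arsinh\bigl(\delta\sqrt{m(T_j+D)/(2d)}\bigr)=O(\log T_j)$, which contradicts $T_j\to\infty$. A dissipation of size $c/T_j$ is indeed small. It is nevertheless large enough to make the time spent near the saddle only logarithmic in $T_j$, whereas reaching the escaping excursion requires time $T_j-R$, which grows linearly.
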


\begin{proof}
In the small-amplitude region, equations~\eqref{eq2.5} and~\eqref{eq2.6} give
$$
 u'^2=2E-2F_\eps(u)\ge2E+mu^2.
 \eqnum{eq2.12}
$$
Suppose there are states $u_j$ and radii $r_j\to\infty$ with $|u_j(r_j)|\ge\delta$. Pass to the local limit $U$ in Lemma~\ref{lem2.3}. Choose a fixed $R\ge1$ such that $|U(r)|<\delta/2$ for all $r\ge R$. For large $j$, let $T_j$ be the first radius after $R$ at which $|u_j|=\delta$. Local convergence implies $T_j\to\infty$.

At $T_j$ the profile is moving away from zero. Before it can turn back towards zero, its amplitude must exceed $\beta_0>1$. Thus it reaches $2\delta$ at some first $S_j>T_j$. Equation~\eqref{eq2.12} bounds the travel time by
$$
 S_j-T_j\le D:=\frac{\log2}{\sqrt m}.
$$
Using $s=|u_j|$ on this outgoing part, energy dissipation yields
 \begin{equation}{\label{eq2.13}}
 E_{u_j}(T_j)
\geq\int_{T_j}^{S_j}\frac{n-1}{r}u_j'^2\,dr
 \geq\frac{n-1}{T_j+D}\int_\delta^{2\delta}\sqrt m\,s\,ds
 =\frac{d}{T_j+D}.
 \end{equation}
Here $d=3(n-1)\sqrt m\,\delta^2/2>0$. On $[R,T_j]$ the energy is at least this value, while $|u_j|\le\delta$. Its derivative cannot vanish there by~\eqref{eq2.12}; hence $u_j$ is monotone and can cross zero at most once. Integrating the reciprocal speed over the larger amplitude interval $[-\delta,\delta]$ gives
 \begin{equation}{\label{eq2.14}}
 T_j-R\le\int_{-\delta}^{\delta}
       \frac{\,ds}{\sqrt{2d/(T_j+D)+ms^2}}
=\frac{2}{\sqrt m}\arsinh\biggl(
       \delta\sqrt{\frac{m(T_j+D)}{2d}}\biggr).
\end{equation}
The right side grows only logarithmically in $T_j$, a contradiction.
\end{proof}

This estimate accounts for the energy dissipated by an escaping outgoing peak. Nonnegativity of the energy alone would not bound the time spent close to zero.

\subsection{Proof of the compactness theorem}
In this subsection, we complete the proof of Theorem \ref{thm:compactness}. The uniform small-amplitude bound from Lemma \ref{lem2.4} yields a common exponentially decaying tail. Together with local convergence, this gives strong compactness and shows that the number of nodes is preserved in the limit.

\begin{proof}[Proof of Theorem~\ref{thm:compactness}]
Apply Lemma~\ref{lem2.4}. Beyond its common radius $R$, there can be neither a node nor a critical point: either would require a later extremum of amplitude greater than $\beta_0$, in conflict with the uniform small tail. Thus $y=|u|$ is strictly decreasing on $[R,\infty)$, and~\eqref{eq2.12} implies
$$
 y(r)\le\delta e^{-\sqrt m(r-R)}.
 \eqnum{eq2.15}
$$
For $|s|\le\delta$ and $0\le\eps\le E_*$, $|f_\eps(s)|\le C|s|$ with a common $C$. The flux argument of Lemma~\ref{lem2.1} consequently gives~\eqref{eq2.7} for each state and the uniform estimate
$$
 |u'(r)|\le Ce^{-\sqrt m(r-R)}
 \int_0^\infty(1+s)^{n-1}e^{-\sqrt m s}\dd s
 \qquad(r\ge R).
 \eqnum{eq2.16}
$$
Here we used $(r+s)^{n-1}/r^{n-1}\le(1+s)^{n-1}$ for $r\ge1$. Together with Proposition~\ref{pro2.1}, these estimates yield the uniform
bounds in \eqref{eq2.4a} and the exponential decay estimate \eqref{eq2.5a}.

For a sequence with exactly $k$ nodes, Lemma~\ref{lem2.3} gives a local limit $U$. The uniform exponential estimates pass to the limit, so $U$ is a bound state with finite energy. All nodes lie in a common bounded interval. None can approach the origin because $U(0)\ge\beta_0>0$. Every limiting zero is simple; in a sufficiently small neighbourhood its derivative has one sign, and $C^1$ convergence gives exactly one zero of $u_j$ there. On the rest of the compact interval $|U|$ is bounded away from zero. Finally, the uniform tail is node-free. It follows that $U$ has exactly $k$ nodes.

The local convergence and uniform exponential bounds imply convergence in $C_0(\R^n)$. They also provide an integrable bound for $(|u_j|^2+|u_j'|^2)r^{n-1}$ and for the corresponding difference from $U$. Dominated convergence on the tail, together with local $C^1$ convergence, gives strong convergence in $H^1(\R^n)$.
\end{proof}

\section{Structure and auxiliary identities}\label{sec3}
We now turn to the analysis of a fixed nodal bound state. This section establishes the structural inequalities and differential identities needed to track its shooting variation across successive nodal intervals.

For Sections~\ref{sec3}-\ref{sec5}, fix one normalized $k$-node bound state, with $k\ge1$, and suppress the subscript $\eps$ on $f$ and $F$. Let $b$ and $\beta$ be their positive zeros. Write the nodes as $z_1<\cdots<z_k$ and the subsequent critical points as $c_1<\cdots<c_k$, with $c_0=0$. Thus each finite phase $(c_{i-1},c_i)$ is strictly monotone and contains exactly one node $z_i$. The final phase $(c_k,\infty)$ is monotone towards zero. Energy dissipation gives
$$
 |u(c_{i-1})|>|u(c_i)|>\beta\qquad(1\le i\le k).
 \eqnum{eq3.1}
$$

\subsection{Positive Pohozaev quantities}
In this subsection, we establish positive quantities associated with the radial equation. Their differential identities yield monotonicity along the profile and provide sign information that will be used in the subsequent comparison of the shooting variation.

Define
$$
 \begin{aligned}
 P&=r^n(u'^2+2F(u))+(n-2)r^{n-1}uu',\\
 P_1&=r^n(u'^2+uf(u))+(n-2)r^{n-1}uu',\\
 P_2&=r^n\biggl(u'^2+\frac{n-2}{n}uf(u)\biggr)
                         +(n-2)r^{n-1}uu'.
 \end{aligned}
 \eqnum{eq3.2}
$$
Direct differentiation using equation~\eqref{eq2.2} yields
$$
 \begin{aligned}
 P'&=r^{n-1}\{2nF(u)-(n-2)uf(u)\},\\
 nP_2'&=-r^nu'\{(n+2)f(u)-(n-2)uf'(u)\},~~~
 (P/r^n)'&=-nP_2/r^{n+1}.
 \end{aligned}
 \eqnum{eq3.3}
$$

\begin{lemma}\label{lem3.1}
For every $r>0$, $P(r)>0$ and $P_2(r)>0$. Hence $P(r)/r^n$ is strictly decreasing. On intervals where $|u|\ge b$, one also has $P_1>0$ and
$$
 \omega=-\frac{ru'}{u},\qquad
 \omega'=\frac{P_1}{r^{n-1}u^2}>0.
 \eqnum{eq3.4}
$$
The assertion about $\omega$ is used only where $|u|\ge b$.
\end{lemma}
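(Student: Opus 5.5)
The plan is to reduce everything to the positivity of $P_2$. The other assertions then follow quickly.

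\emph{From $P_2>0$ to the rest.} By Lemma~\ref{lem2.1}, $u$ and $u'$ decay exponentially, so $P(r)/r^n=u'^2+2F(u)+(n-2)uu'/r\to0$ as $r\to\infty$. If $P_2>0$ on $(0,\infty)$, then \eqref{eq3.3} shows that $P/r^n$ is strictly decreasing, and a strictly decreasing function with limit $0$ is positive. This gives $P>0$.

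For $\omega$ I will differentiate directly and eliminate $u''$ with \eqref{eq2.2}:
$$\omega'=\frac{-(u'+ru'')u+ru'^2}{u^2}=\frac{(n-2)uu'+ruf(u)+ru'^2}{u^2}=\frac{P_1}{r^{n-1}u^2}.$$
Since $P_1=P_2+\tfrac{2}{n}r^nuf(u)$ and $uf(u)\ge0$ where $|u|\ge b$ (by Lemma~\ref{lem2.1}), we get $P_1\ge P_2>0$ there. This settles the $\omega$ part.

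\emph{Positivity of $P_2$ along the phases.} I will write $h(s)=(n+2)f(s)-(n-2)sf'(s)$, so that $nP_2'=-r^nu'h(u)$. For the normalized $f$,
$$h(s)=-4s-\eps\bigl((n+2)-(n-2)q\bigr)|s|^{q-1}s+\bigl((n+2)-(n-2)p\bigr)|s|^{p-1}s.$$
Both bracketed coefficients are positive because $q<p<\ps$. Hence $h(s)/s$ is strictly increasing in $|s|$ and $h$ has a unique positive zero $s_h$, with $sh(s)<0$ for $0<|s|<s_h$ and $sh(s)>0$ for $|s|>s_h$.

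On each monotone phase, $P_2$ is therefore a function of the amplitude whose monotonicity changes only once, at $|u|=s_h$. Along an outward phase in which $|u|$ decreases, $P_2$ decreases while $|u|>s_h$ and increases while $|u|<s_h$; the same holds on phases in which $|u|$ increases. So on each phase $(c_{i-1},c_i)$ the minimum of $P_2$ is attained either at an endpoint or where $|u|=s_h$. The endpoint values are positive:
\begin{itemize*}
\item at a critical point, $P_2=\tfrac{n-2}{n}r^nuf(u)>0$ because $|u|>\beta>b$;
\item at a node, $P_2=r^nu'^2>0$.
\end{itemize*}
On the final phase, $|u|<s_h$ eventually, so $P_2$ decreases to its limit $0$ and is positive there.

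Near the origin I will expand using \eqref{eq2.3}. The $r^n$ terms of order zero cancel, and the expansion should show that $P_2$ has the sign of $\tfrac{1}{n}f(\alpha)h(\alpha)$ times a positive power of $r$. This is positive since $\alpha>\beta$ is large enough that $h(\alpha)>0$; I would check that $\beta\ge s_h$, or else handle this case separately.

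\emph{Main obstacle.} The difficulty is the interior points where $|u|=s_h$, where neither endpoint argument applies. I would argue by contradiction at the first radius $r_0$ where $P_2=0$. On $(0,r_0)$ the function $P/r^n$ is strictly decreasing. I would then compare $P_2-P=r^n\bigl(\tfrac{n-2}{n}uf(u)-2F(u)\bigr)$ at $r_0$, and use $P'=r^{n-1}\{2nF-(n-2)uf\}$, whose bracket also has a single sign change in $|u|$. The aim is to derive $P(r_0)>0$ from the later behaviour (the decreasing tail) and an inequality between the two amplitude thresholds. This comparison of thresholds is the step I expect to require the most care.
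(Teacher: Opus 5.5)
Your reductions are correct, and the one for $P$ is a genuine shortcut. Once $P_2>0$ on $(0,\infty)$, the identity $(P/r^n)'=-nP_2/r^{n+1}$ and $P/r^n=u'^2+2F(u)+(n-2)uu'/r\to0$ give $P>0$ at once. The paper instead proves $P>0$ separately, by an arc-by-arc analysis of $P'=r^{n-1}\{2nF(u)-(n-2)uf(u)\}$. Likewise $P_1=P_2+\tfrac{2}{n}r^nuf(u)$ is a valid substitute for the paper's $P_1=P+r^n(uf(u)-2F(u))$.

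The gap is the positivity of $P_2$ itself. Your ``main obstacle'' is left as an unexecuted plan, and it comes from a sign error. Write $s=|u|$. Since $G(s)=(n+2)f(s)-(n-2)sf'(s)$ is odd, $-u'G(u)=-s'G(s)$, so $P_2'$ has the sign of $-s'G(s)$. On arcs where $|u|$ decreases, $P_2$ therefore increases while $|u|>s_h$ and decreases while $|u|<s_h$. This is the reverse of your rule; your tail remark that $P_2$ ``decreases to its limit $0$'' actually uses the correct sign. On arcs where $|u|$ increases, $P_2$ increases while $|u|<s_h$ and then decreases. So on every monotone arc $P_2$ rises and then falls in $r$. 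The radius with $|u|=s_h$ is an interior maximum, never a candidate for the minimum.

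With the sign corrected, the proof closes using only endpoint values, which is exactly the paper's argument on incoming arcs. On each arc $P_2$ is bounded below by its smaller endpoint value, either $P_2(z_i)=z_i^nu'(z_i)^2>0$ or $P_2(c_i)=\tfrac{n-2}{n}c_i^nu(c_i)f(u(c_i))>0$. On $[0,z_1]$ it cannot decrease throughout from $P_2(0)=0$, because $P_2(z_1)>0$. Hence it is positive on the rising part and above $P_2(z_1)$ on the falling part. On the final arc it stays above its limit $0$. On outgoing arcs the paper instead uses $u'^2>-2F(u)$ and an explicit inequality, but the endpoint argument works there as well.

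In particular, the expansion at the origin is unnecessary, and the check $\beta\ge s_h$ you intended would fail. Already at $\eps=0$ it is equivalent to $(p+1)\bigl(n+2-(n-2)p\bigr)\ge8$. This is false for every $p>1$ when $n\ge4$, and for $3<p<5$ when $n=3$. The fact $\alpha>s_h$ is a consequence of the endpoint argument, not an input to it.
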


\begin{proof}
For $t\in\{p,q\}$ set
$$
 C_t=\frac{2n}{t+1}-(n-2)>0,
 \qquad D_t=n+2-(n-2)t>0.
$$
For $s>0$ the expressions in~\eqref{eq3.3} are
$$
 \begin{aligned}
 2nF(s)-(n-2)sf(s)&=-2s^2-\eps C_qs^{q+1}+C_ps^{p+1},\\
 (n+2)f(s)-(n-2)sf'(s)&=-4s-\eps D_qs^q+D_ps^p.
 \end{aligned}
 \eqnum{eq3.5}
$$
Divide the first by $s^{q+1}$ and the second by $s^q$. Both resulting functions are strictly increasing from $-\infty$ to $+\infty$. Each expression therefore has a unique positive zero and changes sign from negative to positive.

On an incoming arc, where $|u|$ decreases from a critical amplitude to zero, $P'$ can change sign only from positive to negative. A minimum of $P$ on that arc is consequently attained at an endpoint. At a node $P(z_i)=z_i^nu'(z_i)^2>0$, and at a nonzero critical point $P(c_i)=2c_i^nE(c_i)>0$. For the first arc $P(0)=0$, and for the final arc $P(\infty)=0$ by exponential decay. The same derivative pattern, together with these endpoint values, gives strict positivity in the interior also in these two cases. On an outgoing arc, $uu'>0$ and
$$
 P=2r^nE+(n-2)r^{n-1}uu'>0.
$$
This proves $P>0$ everywhere.

We next prove that $P_2(r)>0$ for every $r>0$. On an incoming
arc, $u u'<0$. By \eqref{eq3.3} and the sign structure of the second
expression in \eqref{eq3.5}, $P_2'$ can change sign only from positive
to negative. Thus $P_2$ cannot have an interior minimum on such
an arc. At a node $z_i$,
$$
P_2(z_i)=z_i^n u'(z_i)^2>0,
$$
while at a nonzero critical point $c_i$,
$$
P_2(c_i)=\frac{n-2}{n}c_i^n u(c_i)f(u(c_i))>0.
$$
The remaining endpoint values are $P_2(0)=0$ and
$P_2(\infty)=0$, the latter following from exponential decay.
These endpoint values and the derivative pattern give $P_2>0$
in the interior of every incoming arc.

On an outgoing arc, $u u'>0$, so the last term in the definition
of $P_2$ is positive. Put $s=|u|$. If $s\geq b$, then
$u f(u)\geq0$, and the definition of $P_2$ immediately gives
$P_2>0$. If $0<s<b$, the positivity of the energy yields
$u'^2>-2F(u)$. Consequently,
\begin{equation*}
\frac{P_2}{r^n}> -2F(u)+\frac{n-2}{n}u f(u)
=\frac{s^2}{n}
  \left(2+\varepsilon C_qs^{q-1}-C_ps^{p-1}\right)
>\frac{s^2}{n}
  \left(2-C_p+(C_q-C_p)\varepsilon s^{q-1}\right)>0.
\end{equation*}
Here we used $s^{p-1}<1+\varepsilon s^{q-1}$ for $s<b$,
$C_q>C_p$, and
$$
2-C_p=\frac{n(p-1)}{p+1}>0.$$
Positivity at the node and at the critical point follows from
the endpoint formulas above. Hence \(P_2(r)>0\) for all \(r>0\).

Set
$$
 h(s)=sf(s)-2F(s),\qquad d(s)=sf'(s)-f(s)=h'(s).
 \eqnum{eq3.6}
$$
For $s\ge b$, put $A=s^{p-1}$ and $B=\eps s^{q-1}$; then $A\ge1+B$ and
$$
 h(s)=s^2\biggl(\frac{p-1}{p+1}A
                  -\frac{q-1}{q+1}B\biggr)>0.
$$
Since $h$ is even, $P_1=P+r^nh(u)>0$ when $|u|\ge b$. Differentiating $\omega$ gives~\eqref{eq3.4}.
\end{proof}

\subsection{Large-amplitude algebra}
In this subsection, we establish the sign and monotonicity properties of the auxiliary functions used when $|u|\geq b$. These properties make the comparison quantities well defined in the large-amplitude region and provide the inequalities needed in later nodal intervals.

Define the even quotients
$$
 g_1(s)=\frac{2f(s)}{d(s)},\qquad
 g(s)=\frac{2F(s)}{h(s)}\qquad(|s|\ge b).
 \eqnum{eq3.7}
$$
Both quotients are considered only for $|s|\geq b$, where their
denominators do not vanish. The quotient $g_1$ is used on the
first incoming arc with $u\geq b$, whereas $g$ is also used on
subsequent arcs with $|u|\geq b$. Since $h>0$ on this region,
the sign of $g$ is determined by that of $F$: namely,
$g(s)<0$ for $b\leq|s|<\beta$, $g(s)=0$ for $|s|=\beta$,
and $g(s)>0$ for $|s|>\beta$.
Neither quotient is used across a zero of its denominator.

\begin{lemma}\label{lem3.2}
For $s\ge b$, $d(s)>0$, $f'(s)>0$, $f''(s)>0$, $g_1'(s)>0$, and $g'(s)>0$. Moreover,
\begin{equation}{\label{eq3.8}}
g_1(s)-g(s)=\frac{g'(s)h(s)}{d(s)}>0,~~g(\beta)=0,~~
0<g(s)<\frac{2}{p-1}~(s>\beta).
\end{equation}
In particular $sg'(s)>0$ for $|s|\ge b$.
\end{lemma}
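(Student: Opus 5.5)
The plan is to reduce every assertion to an elementary inequality in two variables by factoring out the powers of $s$. For $s\ge b$ put, as in the proof of Lemma~\ref{lem3.1}, $A=s^{p-1}$ and $B=\eps s^{q-1}$, so that $A\ge 1+B$ because $f(s)\ge0$. These variables satisfy $sA'=(p-1)A$ and $sB'=(q-1)B$.

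\textbf{Step 1: the direct signs.} We have
$$
f=s(A-1-B),\qquad d=s\bigl((p-1)A-(q-1)B\bigr),\qquad
f'=pA-qB-1,\qquad sf''=p(p-1)A-q(q-1)B .
$$
Using $A\ge 1+B>B\ge0$ and $p>q>1$:
\begin{itemize}
\item $d>0$, because $(p-1)A>(q-1)B$;
\item $f'\ge p-1+(p-q)B>0$;
\item $f''>0$, because $p(p-1)A>q(q-1)B$.
\end{itemize}

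\textbf{Step 2: monotonicity of $g_1$.} Write
$$
g_1=\frac{2(A-1-B)}{(p-1)A-(q-1)B}.
$$
Differentiating with the rules $sA'=(p-1)A$, $sB'=(q-1)B$, the numerator of $s g_1'$ is, up to the positive factor $2/d^2$,
$$
N_1=\bigl((p-1)A-(q-1)B\bigr)^2-(A-1-B)\bigl((p-1)^2A-(q-1)^2B\bigr).
$$
The quadratic terms should cancel, leaving
$$
N_1=(p-q)^2AB+(p-1)^2A-(q-1)^2B,
$$
which is positive since $A>B$ and $p>q$.

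\textbf{Step 3: monotonicity of $g$.} With $a=A/(p+1)$ and $\tilde b=B/(q+1)$ (the scaling rules are the same), we have
$$
g=\frac{2a-2\tilde b-1}{(p-1)a-(q-1)\tilde b}.
$$
The same expansion gives the numerator of $s g'$ as
$$
2(p-q)^2a\tilde b+(p-1)^2a-(q-1)^2\tilde b .
$$
Its positivity reduces to $(p-1)^2A/(p+1)>(q-1)^2B/(q+1)$. This follows from $A>B$ together with the monotonicity of $t\mapsto (t-1)^2/(t+1)$ on $(1,\infty)$. I expect this step, keeping the bookkeeping of the cancellation honest, to be the only real obstacle; the cancellation of the $a^2$, $\tilde b^2$ and cross terms must be verified carefully.

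\textbf{Step 4: the remaining claims.} For the identity in~\eqref{eq3.8}, use $h'=d$ from~\eqref{eq3.6}, $2f=g_1d$ and $2F=gh$:
$$
g'=\frac{2fh-2Fd}{h^2}=\frac{d(g_1-g)}{h}.
$$
Hence $g_1-g=g'h/d$, which is positive by Step~3, $d>0$, and $h>0$ (Lemma~\ref{lem3.1}). The value $g(\beta)=0$ is immediate from $F(\beta)=0$. Positivity of $g$ for $s>\beta$ follows because $F>0$ and $h>0$ there. For the upper bound, since $h>0$ it is enough to check $(p-1)\cdot 2F<2h$. In the $a,\tilde b$ variables this reads
$$
-(p-1)(2\tilde b+1)<-2(q-1)\tilde b,
$$
which holds because $p>q$. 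Finally, $f$ is odd, so $g_1$ and $g$ are even and $g'$ is odd. Together with $g'>0$ for $s\ge b$, this gives $sg'(s)>0$ for $|s|\ge b$.
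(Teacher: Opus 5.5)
Your proof is correct and follows essentially the same route as the paper. Both arguments use the substitution $A=s^{p-1}$, $B=\eps s^{q-1}$ with $A\ge 1+B$, and arrive at the same numerator $(p-q)^2AB+(p-1)^2A-(q-1)^2B$ for $g_1'$ and its analogue for $g'$. For the latter, your monotonicity of $t\mapsto(t-1)^2/(t+1)$ is the paper's $t\mapsto t^2/(t+2)$ shifted by one. The identity $g_1-g=g'h/d$ comes from $h'=d$ in both, and your inequality $(p-1)\cdot 2F<2h$ is the paper's $h-(p-1)F>0$. Your explicit use of the scaling rules $sA'=(p-1)A$, $sB'=(q-1)B$ simply spells out the differentiation the paper leaves implicit.
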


\begin{proof}
Write $a=p-1$, $\ell=q-1$, $A=s^a$, $B=\eps s^\ell$, so $a>\ell>0$ and $A\ge1+B$. Then
$$
 \begin{aligned}
 d(s)/s&=aA-\ell B>0,~~~
 f'(s)=-1+(a+1)A-(\ell+1)B>0,\\
 sf''(s)&=a(a+1)A-\ell(\ell+1)B>0.
 \end{aligned}
$$
The last derivative is needed only away from zero. Differentiating $g_1$ gives
$$
 \frac{s}{2}g_1'(s)=
 \frac{a^2A-\ell^2B+(a-\ell)^2AB}{(aA-\ell B)^2}>0.
 \eqnum{eq3.9}
$$
If $\mathcal H=aA/(a+2)-\ell B/(\ell+2)>0$, differentiation of $g$ gives
$$
 \frac{s}{2}g'(s)=\frac{1}{\mathcal H^2}
 \biggl\{\frac{a^2A}{2(a+2)}-\frac{\ell^2B}{2(\ell+2)}
       +\frac{(a-\ell)^2AB}{(a+2)(\ell+2)}\biggr\}>0.
 \eqnum{eq3.10}
$$
Indeed $t^2/(t+2)$ is strictly increasing for $t>0$, and $A\ge1+B$. The identity for $g_1-g$ follows from $h'=d$ and $F'=f$. Finally,
$$
 h(s)-(p-1)F(s)
 =\frac{p-1}{2}s^2+\frac{\eps(p-q)}{q+1}s^{q+1}>0.
$$
For $s>\beta$ this gives the upper bound in~\eqref{eq3.8}; the lower bound follows from $F>0$ and $h>0$. Evenness supplies the assertion on negative amplitudes.
\end{proof}

\subsection{Variation quantities and the regular bridge}
We now introduce the shooting variation and the quantities used to track it along the radial profile. The identities derived below govern the comparison in the large-amplitude region, while a regular quantity carries the relevant information through the small-amplitude region, where the auxiliary quotients of the preceding subsection are unavailable.

Let $v=\partial_\alpha u$ be the regular shooting variation along the fixed state:
$$
 v''+\frac{n-1}{r}v'+f'(u)v=0,
 \qquad v(0)=1,\quad v'(0)=0.
 \eqnum{eq3.11}
$$
Define
$$
 \begin{aligned}
 Q&=r^n(u'v'+f(u)v)+(n-2)r^{n-1}u'v,\\
 M&=r^{n-1}(u'v-uv'),~~ Q_j=Q+jr^{n-1}u'v,\\
 \rho&=r^{n-1}\{f'(u)u'v-f(u)v'\},\\
 T_1&=Q-g_1(u)M,~~T=Q-g(u)M.
 \end{aligned}
 \eqnum{eq3.12}
$$
Here $j$ is a numerical subscript; in particular, the subscript
of $Q_n$ is the spatial dimension and is unrelated to the
exponent $q$. The quantities $T_1$ and $T$ are defined wherever
$|u|\geq b$. We use $T_1$ only on the first incoming arc with
$u\geq b$, and $T$ on the portions of the trajectory satisfying
$|u|\geq b$. In particular, no sign condition on $g(u)$ is
implicit in the definition of $T$.

The differential identities used in the proof are
$$
 \begin{aligned}
 Q'&=2r^{n-1}f(u)v,& M'&=r^{n-1}d(u)v,\\
 Q_j'&=r^{n-1}\{ju'v'+(2-j)f(u)v\},&
 \rho'&=r^{n-1}f''(u)u'^2v,\\
 T_1'&=-g_1'(u)u'M,&
 T'&=g'(u)\{r^{n-1}h(u)v-u'M\}.
 \end{aligned}
 \eqnum{eq3.13}
$$
The identity for $\rho$ is used only for $|u|\ge b$, where $f''$ exists and has the required sign.

On a monotone arc, where $u'\ne0$, set
\begin{equation}{\label{eq3.14}}
 \Psi=\frac{Q_n}{ru'^2},~~
 F_\eta(s)=F(s)-\frac{\eta}{2}h(s),~~
 B_\eta=Q-\eta M-2F_\eta(u)\frac{r^{n-1}v}{u'}.
\end{equation}
Here $\eta$ is a constant. Unlike $T$, the bridge $B_\eta$ is defined throughout such an arc, including its node. Its derivative and two quotient identities are
\begin{equation}{\label{eq3.15}}
 B_\eta'=-2F_\eta(u)\Psi,~~
 (v/u')'=\frac{Q_1}{r^nu'^2},~~
 \bigl(Q/(r^{n-1}u')\bigr)'=\frac{f(u)Q_2}{r^{n-1}u'^2}.
\end{equation}
For later use, where also $u\ne0$ and $h(u)\ne0$, put
$$
 Z=M-\frac{r^{n-1}h(u)v}{u'}.
$$
The connecting identities are
$$
 Q-P\frac{v}{u}=\omega Z,
 \qquad T'=-g'(u)u'Z,
 \qquad T-B_0=\frac{g(u)}{g'(u)u'}T'.
 \eqnum{eq3.16}
$$

For completeness, the bridge differentiation can be checked without differentiating a singular quotient. If $W=r^{n-1}v/u'$, then substitution of the two differential equations gives $W'=Q_n/(ru'^2)=\Psi$. For any differentiable function $H$, the derivative of $2H(u)W$ is $2H'(u)r^{n-1}v+2H(u)\Psi$. Taking $H=F_\eta$, and using $2F_\eta'=2f-\eta d$, proves the first identity in~\eqref{eq3.15}. The other identities follow by the quotient rule. For~\eqref{eq3.16}, expand $Q-Pv/u$ and factor $-ru'/u$; the factor remaining is precisely $Z$. These computations require only $f\in C^1$, except for the explicitly restricted identity for $\rho$.

\section{Comparison across one node}\label{sec4}

This section develops the comparison across a single nodal phase. We first establish the required signs on the initial phase and then prove comparison and positivity estimates that can be applied after each subsequent critical point.

\subsection{The first phase}
In this subsection we begin with the phase from the origin to the first nonzero critical point. The regular initial conditions allow us to locate the zero of the shooting variation and establish the sign properties needed to initiate the comparison across later phases.

\begin{lemma}\label{lem4.1}
On $[0,c_1]$ the variation $v$ has exactly one zero $\tau$, and $u(\tau)>b$. It satisfies
$$
 v>0\ \text{on }(0,\tau),\qquad
 v'<0\ \text{on }(0,\tau),\qquad
 v<0\ \text{on }(\tau,c_1].
$$
One has $Q_1,Q_2>0$ on $(0,c_1]$ and $Q(c_1),M(c_1)>0$. Let $r_\beta$ and $\bar r_\beta$ be the incoming and outgoing radii at which $u=\beta$ and $u=-\beta$, respectively. Then
$$
 \begin{gathered}
 Q>0\ \text{on }(0,r_\beta]\cup[\bar r_\beta,c_1],
 \qquad Q(\bar r_\beta)>Q(r_\beta),\\
 M>0\ \text{on }(0,\tau],\qquad
 T>0\ \text{on }(0,r_\beta].
 \end{gathered}
 \eqnum{eq4.1}
$$
\end{lemma}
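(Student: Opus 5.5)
The plan is to organise the argument around the quotient $v/u'$ and the monotone quantities in \eqref{eq3.13}, working on $(0,c_1)$ where $u'<0$. First I expand every quantity at the origin, using $u'(r)=-f(\alpha)r/n+o(r)$, $v=1+O(r^2)$ and $v'=-f'(\alpha)r/n+o(r)$. Since $\alpha>\beta>b$, Lemma~\ref{lem3.2} gives $f(\alpha),f'(\alpha),d(\alpha)>0$. The expansions are
$$
Q\sim \tfrac{2}{n}f(\alpha)r^n,\qquad
M\sim\tfrac{1}{n}d(\alpha)r^n,\qquad
Q_1,Q_2>0\ \text{for small } r,\qquad
T_1(0)=T(0)=0 .
$$

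\textbf{Existence of a zero.} The function $w=u'$ solves the linearized equation with the smaller potential $f'(u)-(n-1)/r^2$, and it vanishes at $0$ and at $c_1$. A Sturm comparison on $[0,c_1]$, via the Wronskian $r^{n-1}(u''v-u'v')$ and the regular behaviour at $0$, then forces $v$ to vanish in $(0,c_1)$. Let $\tau$ be the first zero.

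On $(0,\tau)$ the variation is positive. Wherever also $u\ge b$, we have $f'(u)>0$, so $(r^{n-1}v')'=-r^{n-1}f'(u)v<0$ and $v'<0$. Likewise $M'=r^{n-1}d(u)v>0$, so $M>0$, and $Q_2'=2r^{n-1}u'v'>0$.

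\textbf{Uniqueness and sign pattern.} I will prove $Q_1>0$ on $(0,c_1]$ from $Q_1(0)=0$ and $Q_1=Q_2-r^{n-1}u'v$. On $(0,\tau]$ both terms are positive. On $(\tau,c_1]$ I track $Q_2$ and $Q_1'=r^{n-1}(u'v'+f(u)v)$ separately on the subintervals $u\ge b$ and $u<b$, using $v<0$ there. Given $Q_1>0$, the identity $(v/u')'=Q_1/(r^nu'^2)$ in \eqref{eq3.15} shows that $v/u'$ is strictly increasing on $(0,c_1)$. It tends to $-\infty$ at $0^+$, so $v$ has exactly one zero in $(0,c_1)$. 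Moreover $v<0$ after $\tau$, and at $c_1$ the equation gives $v(c_1)<0$ unless $v$ vanished there, which I exclude by the same monotonicity.

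The monotonicity of $v/u'$ also gives $v'<0$ on the whole of $(0,\tau)$, including any part with $u<b$. Indeed, $(v/u')'>0$ with $u'<0$, $v>0$ gives $v'u'-vu''>0$, and this yields $v'<0$ wherever $u''\le0$. Near $\tau$ I will argue directly from $Q_2>0$.

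\textbf{Location of $\tau$ and the $T$ bounds.} To show $u(\tau)>b$, suppose instead $v>0$ on $(0,r_b]$, where $u(r_b)=b$. On $(0,r_b)$ we have $M>0$, so $T_1'=-g_1'(u)u'M>0$ and hence $T_1>0$. At $r_b$, $g_1(b)=0$ gives $Q(r_b)=T_1(r_b)>0$. I then aim for a contradiction using the connecting identity $Q-Pv/u=\omega Z$ from \eqref{eq3.16} together with $P>0$ and $\omega>0$ from Lemma~\ref{lem3.1}, combined with the sign of $Z$ at $r_b$, where $h(b)>0$, $v>0$ and $u'<0$ make $-r^{n-1}hv/u'>0$. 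This is the step I expect to be the main obstacle. If this route fails, I will replace it by a comparison of $v$ with $ru'+\frac{2}{p-1}u$, the usual scaling variation, on $[0,r_b]$.

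The same computation with $T$ in place of $T_1$ gives $T'=g'(u)\{r^{n-1}hv-u'M\}>0$ on $(0,\min(\tau,r_b))$, and I continue it to $r_\beta$ using $\tau$'s position. Then $T>0$ on $(0,r_\beta]$, and since $g(\beta)=0$, $Q=T>0$ on $(0,r_\beta]$.

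\textbf{Outgoing arc.} On $[\bar r_\beta,c_1]$ we have $u\le-\beta$, so $f(u)<0$ and $v<0$. Hence $Q'=2r^{n-1}f(u)v>0$ there, and $Q(\bar r_\beta)>Q(r_\beta)$ will give $Q>0$ on that arc and at $c_1$. To get $Q(\bar r_\beta)>Q(r_\beta)$, I integrate $B_0'=-2F(u)\Psi$ across $[r_\beta,\bar r_\beta]$, where $F(u)<0$ because $|u|<\beta$. The boundary terms of $B_0$ reduce to $Q$ at $\pm\beta$ since $F(\pm\beta)=0$. This needs $\Psi\ge0$, i.e.\ $Q_n>0$, which I will derive from $Q_2,Q_1>0$ and $u'v>0$ beyond $\tau$.

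Finally, $M(c_1)=-c_1^{n-1}u(c_1)v'(c_1)$, and $M(c_1)>0$ follows from $v'(c_1)<0$. I obtain that sign from $(v/u')'>0$ near $c_1$: as $u'\to0^-$ with $v<0$, this forces $v'(c_1)u''(c_1)<0$, and $u''(c_1)>0$ at a minimum.
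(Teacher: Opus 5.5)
\emph{Verdict: the proposal has genuine gaps.} The paper's $\rho$ functional is missing, and the positivity of $Q_1$ past $\tau$ is never actually established.

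\textbf{Locating $\tau$ and the sign of $v'(c_1)$.} Your Sturm comparison with $u'$ does produce a zero of $v$ in $(0,c_1)$, but it does not locate that zero. The contradiction you aim for at $r_b$ does not exist. If $v>0$ on $(0,r_b]$, then $M>0$, $P>0$, $\omega>0$ and $-r_b^{n-1}h(b)v/u'>0$. Hence $Z>0$, and $Q-Pv/u=\omega Z$ merely reconfirms $Q(r_b)>0$, consistent with $T_1(r_b)>0$.

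The paper uses instead $\rho=r^{n-1}\{f'(u)u'v-f(u)v'\}$, with $\rho'=r^{n-1}f''(u)u'^2v$ and $f''>0$ for $u\ge b$. From $\rho(0)=0$ and $v>0$ one gets $\rho(r_b)>0$. But $f(b)=0$ forces $\rho(r_b)=r_b^{n-1}f'(b)u'(r_b)v(r_b)\le0$.

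Until $\tau<r_b$ is known, your facts $v'<0$, $M>0$ and $Q_2'>0$ on $(0,\tau)$ hold only where $u\ge b$. Arguing \emph{near $\tau$ from $Q_2>0$} is circular, since $Q_2>0$ was derived from $v'<0$.

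Your last step is wrong as written. Since $u(c_1)<0$, $M(c_1)=-c_1^{n-1}u(c_1)v'(c_1)$ is positive exactly when $v'(c_1)>0$, not $<0$. Moreover $(v/u')'>0$ near $c_1$ says nothing about $v'(c_1)$: the numerator $v'u'-vu''$ tends to $-v(c_1)u''(c_1)>0$ whatever $v'(c_1)$ is. The paper obtains $v'(c_1)>0$ by running $\rho$ again on $(\bar r_b,c_1)$. There $u\le-b$ gives $f''(u)<0$, so with $v<0$ one has $\rho'>0$ and $\rho(\bar r_b)>0$. Hence $0<\rho(c_1)=-c_1^{n-1}f(u(c_1))v'(c_1)$.

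\textbf{Positivity of $Q_1$ on $(\tau,c_1]$.} Your monotonicity of $v/u'$ rests on this, but it is not established. You use $v<0$ after $\tau$ to get $Q_1>0$, and $Q_1>0$ to get $v<0$. On $[z_1,\bar r_\beta]$ no pointwise sign argument is available, because $f(u)v<0$ for $-b<u<0$ and so $Q$ decreases there.

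The missing idea is a first-failure argument with the bridge $B_0$:
\begin{itemize}
\item Before the first zero $s$ of $Q_1$, one has $Q_n=Q_1+(n-1)r^{n-1}u'v>0$ and $F(u)<0$. So $B_0$ increases from $B_0(z_1)=Q(z_1)>0$.
\item At $s$, however, $B_0=-2r^{n-1}(v/u')E<0$.
\item A return of $v$ to zero would give $Q_1=r^nu'v'<0$, so such an $s$ would occur first.
\end{itemize}

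This argument needs $Q(z_1)>0$, which in turn needs $M>0$ on $[\tau,r_b]$. At a first zero of $M$ with $v<0$, one would have $Q=T_1>0$, since $T_1'=-g_1'(u)u'M>0$ before it, but also $Q=P_1v/u<0$. That fact yields $Q(r_b)=T_1(r_b)>0$, and then $Q'>0$ on $(r_b,z_1)$ gives $Q(z_1)>0$.

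The same fact gives $T=T_1+(g_1-g)M>0$ on $(0,r_\beta]$ when $\tau<r_\beta$. Your plan to continue $T'>0$ past $\tau$ has no sign control once $v<0$.

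\textbf{The threshold comparison $Q(\bar r_\beta)>Q(r_\beta)$.} You cannot assume $Q_n\ge0$ on $[r_\beta,\tau)$ when $\tau>r_\beta$, because there $u'v<0$. Split at $\tau$: use $Q'=2r^{n-1}f(u)v>0$ on $(r_\beta,\tau)$, then integrate $B_0$ from $B_0(\tau)=Q(\tau)$.
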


\begin{proof}
Let $r_b$ be the radius with $u(r_b)=b$. If $v$ had no zero before $r_b$, it would be positive there. Since $\rho(0)=0$ and $\rho'>0$ before $r_b$, one would have $\rho(r_b)>0$. But $f(b)=0$ gives
$$
 \rho(r_b)=r_b^{n-1}f'(b)u'(r_b)v(r_b)\le0,
$$
a contradiction, including the possibility $v(r_b)=0$. Thus the first zero $\tau$ lies in $(0,r_b)$ and $v'(\tau)<0$. On $(0,\tau)$, the variation flux satisfies $(r^{n-1}v')'=-r^{n-1}f'(u)v<0$, whence $v'<0$. Equations~\eqref{eq3.13} give $Q,M>0$ and $T_1'>0$, so $T_1>0$ there.

We next show that $M$ cannot vanish on $[\tau,r_b]$. If its first zero occurred while $v<0$, then $T_1$ would still be increasing up to that point and $Q=T_1>0$ there. On the other hand $M=0$ implies $Q=P_1v/u$, which is negative by Lemma~\ref{lem3.1}. A second zero of $v$ cannot occur first: it would have $v'>0$, hence $M=-r^{n-1}uv'<0$, so $M$ would already have vanished. Thus $M>0$ and $v<0$ up to $r_b$. Since $g_1(b)=0$, $Q(r_b)=T_1(r_b)>0$. The derivative of $Q$ is positive before $\tau$ and negative between $\tau$ and $r_b$, proving $Q>0$ on $(0,r_b]$.

On $(r_b,z_1)$, $f(u)<0$ and $v<0$ imply $Q'>0$. A second zero of $v$ in this interval would give $Q=r^nu'v'<0$, contradicting that positivity. Therefore $v(z_1)<0$ and $Q(z_1)>0$.

On the outgoing interval $[z_1,\bar r_\beta]$, use $B_0$ to maintain $Q_1>0$. At $z_1$ one already has $Q_1=Q+r^{n-1}u'v>0$. Before a first zero of $Q_1$, $u'v>0$ and $Q_n\ge Q_1>0$, so $B_0'>0$ because $F(u)<0$. At such a first zero,
$$
 B_0=-r^{n-1}u'v-2F(u)\frac{r^{n-1}v}{u'}
     =-2r^{n-1}\frac{v}{u'}E<0,
 \eqnum{eq4.2}
$$
contrary to $B_0(z_1)=Q(z_1)>0$. If $v$ returned to zero first, its positive derivative there would make $Q_1=r^nu'v'<0$, forcing an earlier zero of $Q_1$. Hence $v<0$ and $Q_1>0$ continue to $\bar r_\beta$. At that endpoint $Q=B_0>Q(z_1)>0$. On $[\bar r_\beta,c_1]$, $f(u)<0$ and $v<0$ give $Q'>0$. A further zero of $v$ would make $Q<0$ in the interior, or $Q(c_1)=0$ if it occurred at the endpoint, both impossible. In particular $Q(c_1)>0$.

To check $Q_1,Q_2$ throughout the first phase, observe first that on $(0,\tau)$,
$$
 Q_2'=2r^{n-1}u'v'>0,
 \qquad Q_1=Q_2-r^{n-1}u'v>Q_2>0.
$$
On $(\tau,z_1]$ use $Q>0$ and $u'v>0$; on $[z_1,\bar r_\beta]$ use $Q_1>0$ and $Q_2>Q_1$; and on $[\bar r_\beta,c_1]$ again use $Q>0$ and $u'v\ge0$.

If $\tau\leq r_\beta$, then $Q_n>0$ between the two
$\beta$-crossings. Hence
$$B_0'=-2F(u)\frac{Q_n}{r(u')^2}\geq0,$$
with strict inequality away from the node. Since
$B_0=Q$ at both crossings, integration gives
$Q(\bar r_\beta)>Q(r_\beta)$.
If $\tau>r_\beta$, then $Q(\tau)>Q(r_\beta)$.
Moreover, $B_0'\geq0$ on $(\tau,\bar r_\beta)$,
with strict inequality away from the node.
Using $B_0(\tau)=Q(\tau)$ and
$B_0(\bar r_\beta)=Q(\bar r_\beta)$ gives the same
strict endpoint comparison. Since $\beta>b$ and $u$ decreases on the incoming arc,
$r_\beta<r_b$. The positivity of $M$ established up to $r_b$
therefore holds throughout $(0,r_\beta]$. On this interval
$T_1>0$, and Lemma \ref{lem3.2} gives
$$T=T_1+(g_1-g)M>0.$$
This proves all signs in~\eqref{eq4.1}.

Finally let $\bar r_b$ satisfy $u(\bar r_b)=-b$. Then
$$
 \rho(\bar r_b)=\bar r_b^{n-1}f'(b)u'(\bar r_b)v(\bar r_b)>0.
$$
On $(\bar r_b,c_1)$, $f''(u)<0$ and $v<0$, so $\rho'>0$. At $c_1$,
$$
 0<\rho(c_1)=-c_1^{n-1}f(u(c_1))v'(c_1).
$$
Since $f(u(c_1))<0$, this implies $v'(c_1)>0$ and $M(c_1)=-c_1^{n-1}u(c_1)v'(c_1)>0$.
\end{proof}

\subsection{Equal-amplitude comparison}
We next compare the incoming and outgoing arcs at the same absolute amplitude. Under the sign assumptions below, this comparison shows that the normalized quantity $\phi$ is strictly larger on the outgoing arc, a fact used to control contributions from the two large-amplitude regions.

Consider a finite phase $[c_-,c_+]$. Reflecting both functions preserves the auxiliary quantities. We may therefore arrange that $u$ decreases from a positive maximum to a negative minimum. Let $\tau$ be the only zero of $v$ in the phase. In the next three lemmas we assume
$$
 \begin{gathered}
 Q_1,Q_2>0\text{ in the phase interior},\quad
 Q>0\text{ on }(c_-,r_\beta]\cup[\bar r_\beta,c_+],\\
 Q(\bar r_\beta)>Q(r_\beta),\quad
 M>0\text{ on }(c_-,\tau],\quad u(\tau)>b,\\
 v>0, v'<0\text{ on }(c_-,\tau),\quad
 v<0\text{ on }(\tau,c_+],\quad
 T>0\text{ on }(c_-,r_\beta].
 \end{gathered}
 \eqnum{eq4.3}
$$
At $c_-=0$ the auxiliary quantities vanish; at any other left endpoint we will have $Q,Q_1,Q_2,M,T>0$. Lemma~\ref{lem4.1} proves the assumptions for the first phase. Section~\ref{sec5} will establish them independently for each later phase before invoking the comparison lemmas.

For $\sigma\in[\beta,|u(c_+)|)$, let $r_-(\sigma)$ and $r_+(\sigma)$ be the unique radii where $u=\sigma$ and $u=-\sigma$. Put
$$
 \phi=\frac{Q}{r^{n-1}|u'|},\qquad
 \phi_2=\frac{Q_2}{r^{n-1}|u'|^3}.
$$

\begin{lemma}\label{lem4.2}
Under~\eqref{eq4.3},
$$
 \phi(r_+(\sigma))>\phi(r_-(\sigma))
 \qquad(\beta\le\sigma<|u(c_+)|).
 \eqnum{eq4.4}
$$
\end{lemma}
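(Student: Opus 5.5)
The plan is to treat both large-amplitude arcs as graphs over the common amplitude variable $s=|u|\in[\beta,|u(c_+)|)$ and to compare the two resulting functions $\phi_-(s)=\phi(r_-(s))$ and $\phi_+(s)=\phi(r_+(s))$ by a first-contact argument.

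The first step is to derive a common equation for both arcs. From \eqref{eq3.15}, $\bigl(Q/(r^{n-1}u')\bigr)'=f(u)Q_2/(r^{n-1}u'^2)$. On the incoming arc $u=s$ and $ds/dr=u'<0$; on the outgoing arc $u=-s$ and $ds/dr=-u'>0$. In both cases $u'<0$ and $\phi=-Q/(r^{n-1}u')$. Using that $f$ is odd, a short computation should give the same relation on both arcs:
$$
 \frac{d\phi_\pm}{ds}=f(s)\,\phi_{2,\pm}(s).
$$
Writing $w=|u'|$ and $Q_2=Q+2r^{n-1}u'v$, one also has $\phi_2=(\phi-2v)/w^2$. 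The comparison therefore reduces to comparing $w_\pm$ and $v_\pm$ at equal amplitude, with $f(s)>0$ for $s\ge\beta>b$. For the speeds, the energy identity $w^2=2E-2F(s)$ and the strict decrease of $E$ give $w_+(s)<w_-(s)$. For $v$, assumption \eqref{eq4.3} gives $v_+<0$. On the incoming arc $v_-$ may be positive or negative, depending on whether $\tau$ precedes $r_-(s)$.

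The second step is to set up the contact argument. Suppose $\sigma_0>\beta$ is the first amplitude with $\phi_+(\sigma_0)=\phi_-(\sigma_0)$, so that $\phi_+>\phi_-$ on $[\beta,\sigma_0)$. Then $d(\phi_+-\phi_-)/ds\le0$ at $\sigma_0$, that is, $\phi_{2,+}\le\phi_{2,-}$ there. This inequality should be contradicted using $w_+<w_-$ together with $Q_2>0$, $Q>0$ on the relevant pieces, and the $T$-information. On the incoming arc, $T>0$ on $(c_-,r_\beta]$ and $Q=T+g(u)M$ with $g(u)>0$ for $|u|>\beta$ (Lemma~\ref{lem3.2}). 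I would split according to the sign of $v_-(\sigma_0)$. If $v_-(\sigma_0)\le0$, the signs $v_\pm\le0$, the common value of $\phi$, and $w_+<w_-$ already force $\phi_{2,+}>\phi_{2,-}$, which is the desired contradiction. If $v_-(\sigma_0)>0$, the bound $\phi_{2,-}<\phi_-/w_-^2<\phi_+/w_+^2$ is not enough by itself, because the term $-2v_+/w_+^2$ has the wrong sign. In that case I would instead express the incoming quantities through $T$ and $M$, and use $T'=-g'(u)u'Z$ and the identity $T-B_0=g T'/(g'u')$ from \eqref{eq3.16} to control $v_-$.

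The base point $s=\beta$ is where I expect the main obstacle. The hypothesis $Q(\bar r_\beta)>Q(r_\beta)$ must be upgraded to $\phi_+(\beta)>\phi_-(\beta)$, which requires $r^{n-1}|u'|$ to be no larger at $\bar r_\beta$ than at $r_\beta$. Energy gives $w_+(\beta)<w_-(\beta)$, since $F(\beta)=0$ means $w=\sqrt{2E}$, but the radial factor $r^{n-1}$ increases. I would first try the identity $P=2r^nE+(n-2)r^{n-1}uu'$ with the monotonicity of $P/r^n$ from Lemma~\ref{lem3.1}, evaluated at the two $\beta$-crossings. If that does not close, the fallback is to run the same contact argument for $\phi$ weighted by a suitable power of $r$ chosen to absorb the radial factor. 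Alternatively, one can redo the bridge integration of Lemma~\ref{lem4.1} with $B_\eta$ for a positive constant $\eta$, so that the comparison passes directly to $\phi$ rather than to $Q$.
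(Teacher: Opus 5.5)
Your proposal has two genuine gaps, and in the contact step you have the difficulty of the two cases inverted.

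\emph{The base point is left open, and your first-choice tool does not close it.} Evaluating the monotonicity of $P/r^n$ at the two $\beta$-crossings gives only
$|u'(\bar r_\beta)|^2+(n-2)\beta|u'(\bar r_\beta)|/\bar r_\beta<|u'(r_\beta)|^2-(n-2)\beta|u'(r_\beta)|/r_\beta$.
This compares unweighted speeds, so it cannot absorb the factor $(\bar r_\beta/r_\beta)^{n-1}$ in $\phi$. Your fallbacks (a weighted contact argument, or $B_\eta$ with $\eta>0$) are not specific enough to count as a proof. The missing idea is the weighted energy identity
$(r^{2n-2}E)'=2(n-1)r^{2n-3}F(u)$.
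Its right-hand side is $\le0$ on $[r_\beta,\bar r_\beta]$, strictly away from the node, because $|u|\le\beta$ there. Since $F(\pm\beta)=0$, one has $E=u'^2/2$ at both crossings. Hence $\bar r_\beta^{\,n-1}|u'(\bar r_\beta)|<r_\beta^{\,n-1}|u'(r_\beta)|$, and together with $Q(\bar r_\beta)>Q(r_\beta)>0$ this gives $\phi(\bar r_\beta)>\phi(r_\beta)$.

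\emph{The case analysis is reversed.} With $\phi_2=(\phi-2v)/w^2$ and $v_+<0$, the term $-2v_+/w_+^2$ is positive, so it helps rather than hurts. When $v_-\ge0$, the chain
$\phi_{2,-}\le\phi_-/w_-^2<\phi_+/w_+^2<\phi_{2,+}$
closes at once. This uses $\phi_+=\phi_->0$, which comes from $Q>0$ on $(c_-,r_\beta]$. No $T$, $M$ or $Z$ is needed.

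The case $v_-<0$, i.e.\ $r_->\tau$, which you call immediate, is where real input is required. There $\phi_{2,\pm}=(\phi_\pm+2|v_\pm|)/w_\pm^2$, and signs together with $w_+<w_-$ cannot rule out $|v_-|\gg|v_+|$. You need the monotonicity $(v/u')'=Q_1/(r^nu'^2)>0$ on $(\tau,c_+)$, from \eqref{eq3.15} and the hypothesis $Q_1>0$ in \eqref{eq4.3}. This gives $|v_+|/w_+>|v_-|/w_-$, and multiplying by $1/w_+>1/w_-$ yields $|v_+|/w_+^2>|v_-|/w_-^2$.

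With these two ingredients, your first-contact scheme becomes the paper's argument, which runs the same comparison in integrated form.
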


\begin{proof}
Strict energy dissipation gives $|u'(r_+)|<|u'(r_-)|$. Also,
$$
 (r^{2n-2}E)'=2(n-1)r^{2n-3}F(u)<0
 \quad\text{on }(r_\beta,\bar r_\beta)\setminus\{z\},
$$
where $z$ is the node. Since $F=0$ at the endpoints, this implies
$$
 \bar r_\beta^{n-1}|u'(\bar r_\beta)|
 <r_\beta^{n-1}|u'(r_\beta)|.
$$
The assumed comparison of $Q$ therefore gives~\eqref{eq4.4} at $\sigma=\beta$.

At any matched pair for which $\phi(r_+)\ge\phi(r_-)$, we claim that $\phi_2(r_+)>\phi_2(r_-)$. If $r_-\le\tau$, then $Q_2(r_-)\le Q(r_-)$ and $Q_2(r_+)>Q(r_+)$. Combining this with the positive $\phi$ and the strict speed comparison proves the claim. If $r_->\tau$, both $v$ and $u'$ are negative. By~\eqref{eq3.15},
$$
 (v/u')'>0,
 \qquad \frac{|v(r_+)|}{|u'(r_+)|}
       >\frac{|v(r_-)|}{|u'(r_-)|}.
$$
Use the identity
$$
 \phi_2=\frac{\phi}{u'^2}+\frac{2|v|}{u'^2}\quad(r>\tau).
$$
The first term is larger at $r_+$ by the $\phi$ and speed comparisons. The second is larger because both $|v|/|u'|$ and $1/|u'|$ are larger there. This proves the claim in the second case.

On the incoming arc, $\sigma=u(r_-(\sigma))$, whereas on the
outgoing arc, $\sigma=-u(r_+(\sigma))$. Since $u'<0$ on both arcs,
the last identity in \eqref{eq3.15}, together with the oddness of $f$, gives
$$\frac{d}{d\sigma}\phi(r_\pm(\sigma))=f(\sigma)\phi_2(r_\pm(\sigma)).$$
Subtracting these identities and integrating from $\beta$ to
$\sigma$ yields
\begin{equation}{\label{eq4.5}}
 \phi(r_+(\sigma))-\phi(r_-(\sigma))=\phi(\bar r_\beta)-\phi(r_\beta)
+\int_\beta^\sigma f(s) \{\phi_2(r_+(s))-\phi_2(r_-(s))\}\, ds.
\end{equation}
As long as the difference on the left is nonnegative, the integrand is positive. Since its initial value is positive and $f(s)>0$ for $s\ge\beta$, the difference cannot have a first zero. This proves~\eqref{eq4.4} throughout the stated range.
\end{proof}

\subsection{The middle-interval integral}

The following estimate controls a weighted integral of $Q_n$
without requiring its pointwise positivity before $\tau$.
It is related to the integral estimate of \cite[Lemma 4.3]{Zhang}. We give a self-contained proof for the present
mixed-power nonlinearity, using the sign information available
on the interval $u>b$.

\begin{lemma}\label{lem4.3}
Suppose~\eqref{eq4.3} holds and $\tau>r_\beta$. For every constant $a\ge0$,
$$
 \int_{r_\beta}^{\tau}
       \{a h(u)-2F(u)\}\frac{Q_n}{ru'^2}\dd r>0.
 \eqnum{eq4.6}
$$
\end{lemma}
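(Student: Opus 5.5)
The idea is to use the regular bridge $B_\eta$ of~\eqref{eq3.14} with $\eta=a$. This converts the weighted integral into boundary values. Since $F_a=F-\tfrac a2h$, the weight in~\eqref{eq4.6} is exactly $ah(u)-2F(u)=-2F_a(u)$. The first identity in~\eqref{eq3.15} then gives
$$
 \int_{r_\beta}^{\tau}\{ah(u)-2F(u)\}\frac{Q_n}{ru'^2}\dd r
 =B_a(\tau)-B_a(r_\beta).
$$
Both endpoints are regular, since $u'\ne0$ on the incoming arc. Put $W=r^{n-1}v/u'$. Then $W(\tau)=0$ and $F(\beta)=0$, so $F_a(\beta)=-\tfrac a2h(\beta)$. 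With $Z=M-h(u)W$, the boundary values become
$$
 B_a(\tau)=Q(\tau)-aM(\tau),\qquad
 B_a(r_\beta)=Q(r_\beta)-aM(r_\beta)+ah(\beta)W(r_\beta)=Q(r_\beta)-aZ(r_\beta).
$$
Since $Z(\tau)=M(\tau)$, the integral equals
$$
 \{Q(\tau)-Q(r_\beta)\}+a\{Z(r_\beta)-Z(\tau)\}.
$$
This is affine in $a$, so it suffices to treat the two brackets separately. The first must be strictly positive and the second nonnegative.

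For the first bracket, on $(r_\beta,\tau)$ we have $b<u(\tau)<u<\beta$. Hence $f(u)>0$, and $v>0$ by~\eqref{eq4.3}. Then $Q'=2r^{n-1}f(u)v>0$ by~\eqref{eq3.13}, so $Q(\tau)>Q(r_\beta)$. This settles the case $a=0$.

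For the second bracket, a direct computation gives $Z'=-h(u)\Psi$: differentiate $Z=M-hW$ using $M'=r^{n-1}dv$, $W'=\Psi$ and $h'(u)u'W=d(u)r^{n-1}v$. So one must show $\int_{r_\beta}^{\tau}h\Psi\ge0$. My plan is to extract this from $T$, using $T'=-g'(u)u'Z$ from~\eqref{eq3.16}. On the interval $g'>0$ and $u'<0$, so $T'$ has the sign of $Z$. I would compare endpoint values of $T$. At $r_\beta$, $g(\beta)=0$ gives $T(r_\beta)=Q(r_\beta)>0$. At $\tau$, $T(\tau)=Q(\tau)-g(u(\tau))M(\tau)$, where $g(u(\tau))<0$ and $M(\tau)>0$. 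Integrating $g Z'$ by parts gives
$$
 \int_{r_\beta}^\tau(-2F)\Psi\dd r=\int_{r_\beta}^\tau gZ'\dd r
 =g(u(\tau))M(\tau)+T(\tau)-T(r_\beta).
$$
This links the $Z$-difference with the already positive $Q$-difference. I would then exploit the monotonicity of $-1/g=h/(-2F)$ along the arc, which is positive and decreasing in $r$ by Lemma~\ref{lem3.2}, in an Abel-summation argument.

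The usable partial-integral information is on tails. For $\xi\in(r_\beta,\tau)$,
$$
 \int_\xi^\tau(-2F)\Psi\dd r=Q(\tau)-Q(\xi)+2F(u(\xi))W(\xi)>0,
$$
because $F(u(\xi))<0$ and $W(\xi)<0$. I would combine these tail positivities with the monotone factor $-1/g$ to get $\int h\Psi\ge0$.

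The main obstacle is this last step. The factor $-1/g$ blows up at $r_\beta$. Moreover, its monotonicity points the wrong way for the naive second-mean-value argument applied to tail integrals. If the Abel summation fails, I would instead argue directly on the sign of $Z$. I would try to show that $Z$ cannot increase from $r_\beta$ to $\tau$. For this I would use the identity $Q-Pv/u=\omega Z$ together with $\omega'>0$ and $P>0$ (Lemma~\ref{lem3.1}), and the positivity $T>0$ on $(c_-,r_\beta]$ from~\eqref{eq4.3} to control the sign of $T$ just after $r_\beta$.
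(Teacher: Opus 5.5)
Your reduction is correct and is the paper's split $I_0+aI_h$ in different notation. The bracket $Q(\tau)-Q(r_\beta)$ is $I_0$, and $Z(r_\beta)-Z(\tau)=\int_{r_\beta}^{\tau}h(u)\Psi\dd r$ is $I_h$; indeed $Z(r_\beta)=M(r_\beta)+h(\beta)A(r_\beta)$ with $A=-W$. Your treatment of the first bracket is complete.

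The second bracket, however, is the whole difficulty of the lemma, and none of your routes proves it, so there is a genuine gap.
\begin{itemize}
\item The integration by parts through $T$ is a tautology. Substituting $T(\tau)=Q(\tau)-g(u(\tau))M(\tau)$ and $T(r_\beta)=Q(r_\beta)$ just returns $Q(\tau)-Q(r_\beta)$, so it says nothing about $Z$.
\item The Abel summation fails for exactly the reason you note. Tail positivity of $\int_\xi^\tau(-2F)\Psi\dd r$ transfers to $\int(-1/g)(-2F)\Psi\dd r$ only for a nondecreasing weight. Here $-1/g$ is decreasing and unbounded at $r_\beta$.
\item ``$Z$ cannot increase from $r_\beta$ to $\tau$'' is either a restatement of the goal or, read pointwise, requires $Q_n\ge0$ on $(r_\beta,\tau)$. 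That pointwise sign is precisely what is unavailable before $\tau$.
\end{itemize}

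The missing idea is a pointwise upper bound for $A=-r^{n-1}v/u'>0$, followed by integration by parts against $A$ rather than against $-1/g$.
\begin{itemize}
\item Set $S=-rv'/v-rf(u)/u'$. The radial equation for $u$ gives $A'/A=(2(n-1)-S)/r$, hence $(Au)'/(Au)=(2(n-1)-S-\omega)/r$.
\item Since $Q_2=r^{n-1}u'v\,(n-S)>0$ and $u'v<0$, you get $S>n$.
\item Since $F(\beta)=0$, $P(r_\beta)=r_\beta^{n-1}|uu'|\{\omega(r_\beta)-(n-2)\}>0$. Then $\omega'>0$ on $u\ge b$ propagates $\omega>n-2$ to all of $[r_\beta,\tau]$.
\item Therefore $Au$ is strictly decreasing, so $A<A(r_\beta)\beta/u$ on $(r_\beta,\tau)$.
\end{itemize}
Because $\Psi=-A'$, $A(\tau)=0$ and $d(u)(-u')>0$, the change of variables $s=u(r)$ gives
\[
\int_{r_\beta}^{\tau}h(u)\Psi\dd r=h(\beta)A(r_\beta)-\int_{r_\beta}^{\tau}d(u)(-u')A\dd r
>A(r_\beta)\beta\biggl\{f(u(\tau))+\int_{u(\tau)}^{\beta}\frac{f(s)}{s}\dd s\biggr\}>0 .
\]
The final step uses $h(\beta)=\beta f(\beta)$, $d(s)/s=f'(s)-f(s)/s$, and $f>0$ on $(b,\beta]$.

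You listed $P>0$ and $\omega'>0$ among your tools. The decisive input you never used is $Q_2>0$, which controls the logarithmic derivative of $A$.
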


\begin{proof}
On $[r_\beta,\tau)$, $u>b$, $u'<0$, $v>0$, and $v'<0$. Define
$$
 A=-\frac{r^{n-1}v}{u'}>0,\qquad
 S=-\frac{rv'}{v}-\frac{rf(u)}{u'},\qquad
 \Psi=\frac{Q_n}{ru'^2}.
$$
The function $A$ extends continuously to $\tau$, with $A(\tau)=0$; no value of $S$ at $\tau$ is needed. By~\eqref{eq3.15}, $A'=-\Psi$. Also
$$
 Q_2=r^{n-1}u'v(n-S)>0.
$$
Since $u'v<0$, this yields $S>n$.

At $r_\beta$, positivity of $P$ and $F(\beta)=0$ give
$$
 P(r_\beta)=r_\beta^{n-1}|u(r_\beta)u'(r_\beta)|
                 \{\omega(r_\beta)-(n-2)\}>0.
$$
Thus $\omega(r_\beta)>n-2$. Since $u\ge u(\tau)>b$, Lemma~\ref{lem3.1} gives $\omega'>0$ throughout this interval. Consequently
\begin{equation*}
\frac{A'}{A}=\frac{2(n-1)-S}{r},~~~
\frac{(Au)'}{Au}=\frac{2(n-1)-S-\omega}{r}<0.
\end{equation*}
Here the first identity follows by differentiating $A$ and replacing $u''$ with the radial equation. Therefore
$$
 A(r)<A(r_\beta)\frac{\beta}{u(r)}
 \qquad(r_\beta<r<\tau).
 \eqnum{eq4.7}
$$

Split the integral in~\eqref{eq4.6} into $I_0+aI_h$. By the bridge identity,
\begin{equation*}
 I_0=\int_{r_\beta}^{\tau}(-2F(u))\Psi\,dr=B_0(\tau)-B_0(r_\beta)=Q(\tau)-Q(r_\beta)>0.
\end{equation*}
The endpoint equalities use $v(\tau)=0$ and $F(\beta)=0$; the strict inequality uses $Q'=2r^{n-1}f(u)v>0$ in the interior. Since $h'=d>0$ for $u>b$, integration by parts and~\eqref{eq4.7} give
$$
 \begin{aligned}
 I_h&=-\int_{r_\beta}^{\tau}h(u)A'\dd r\\
 &=h(\beta)A(r_\beta)
       -\int_{r_\beta}^{\tau}d(u)(-u')A\dd r\\
 &>A(r_\beta)\biggl\{h(\beta)
                  -\beta\int_{u(\tau)}^\beta\frac{d(s)}{s}\dd s\biggr\}\\
 &=A(r_\beta)\beta\biggl\{f(u(\tau))
                   +\int_{u(\tau)}^\beta\frac{f(s)}{s}\dd s\biggr\}>0.
 \end{aligned}
 \eqnum{eq4.8}
$$
The last equality follows from
$h(\beta)=\beta f(\beta)$ and
$d(s)/s=f'(s)-f(s)/s$.
Since $d(u)(-u')>0$ on $(r_\beta,\tau)$, replacing $A$
by its strict upper bound in~(5.7) increases the integral
being subtracted and therefore yields a strict lower bound
for $I_h$. The final expression is positive because
$u(\tau)>b$ and $f(s)>0$ for $s>b$.
Thus $I_0>0$ and $I_h>0$, which proves~\eqref{eq4.6} for every $a\geq0$.
\end{proof}

\subsection{Restoring positivity on the outgoing arc}
We now use the equal-amplitude comparison and the middle-interval estimate to recover positivity of $T$ on the outgoing arc. The argument controls its possible interior minimum and completes the comparison across the phase.

\begin{lemma}\label{lem4.4}
Under~\eqref{eq4.3}, $T>0$ on $[\bar r_\beta,c_+]$. In particular $T(c_+)>0$.
\end{lemma}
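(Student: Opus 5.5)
The plan is a continuity argument starting from $\bar r_\beta$. At $\bar r_\beta$ we have $|u|=\beta$, so Lemma~\ref{lem3.2} gives $g(u)=0$ and hence $T(\bar r_\beta)=Q(\bar r_\beta)>0$ by~\eqref{eq4.3}. On $[\bar r_\beta,c_+]$ we have $|u|\ge\beta>b$, so $T$ is defined throughout. Suppose $T$ is not positive on the whole interval. Then there is a first point $r_*\in(\bar r_\beta,c_+]$ with $T(r_*)=0$, and $T$ is nonincreasing as it approaches $r_*$; alternatively we can work at an interior minimum with $T'(r_*)=0$. By the third connecting identity in~\eqref{eq3.16}, at any point where $T'=0$ (and away from $u'=0$) we have $T=B_0$. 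More usefully, put $\sigma_*=|u(r_*)|>\beta$ and $\eta:=g(\sigma_*)\in(0,2/(p-1))$. Then $F_\eta(u(r_*))=0$, so
$$
B_\eta(r_*)=Q(r_*)-\eta M(r_*)=T(r_*).
$$
The same identity $T=B_\eta$ also holds at the matched incoming radius $r_-(\sigma_*)$. If $r_*=c_+$ (where $u'=0$), I would handle it by a limiting argument from the left, or directly through $Q(c_+)$ and $M(c_+)$.

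The core step is to integrate the bridge identity $B_\eta'=-2F_\eta(u)\Psi=\{\eta h(u)-2F(u)\}\Psi$ from $r_-(\sigma_*)$ to $r_*$. This gives
$$
T(r_*)=T(r_-(\sigma_*))+\int_{r_-(\sigma_*)}^{r_*}\{\eta h(u)-2F(u)\}\frac{Q_n}{ru'^2}\,dr .
$$
The first term is positive by~\eqref{eq4.3}. I would split the integral into four pieces.

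\emph{The two large-amplitude pieces} $[r_-(\sigma_*),r_\beta]$ and $[\bar r_\beta,r_*]$. Here $\eta h-2F=h\,(\eta-g(u))>0$, because $g$ is increasing in $|u|$ and $|u|<\sigma_*$.

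\emph{The small-amplitude piece} $[\max(\tau,r_\beta),\bar r_\beta]$. Here $\eta h-2F>0$: if $h\ge0$ this follows from $F<0$; if $h<0$ it follows from $h-(p-1)F>0$ (Lemma~\ref{lem3.2}) together with $\eta<2/(p-1)$. Also $u'v>0$ for $r>\tau$, so $Q_n>Q_1>0$ and $\Psi>0$.

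\emph{The middle piece} $[r_\beta,\tau]$, present only when $\tau>r_\beta$. This is exactly Lemma~\ref{lem4.3} with $a=\eta$.

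The main obstacle is the incoming large-amplitude piece $[r_-(\sigma_*),r_\beta]$ when it lies before $\tau$. There $u'v<0$, so the sign of $Q_n$, and hence of $\Psi$, is not known pointwise. My first attempt would avoid integrating over this piece at all. Instead I would compare $T(r_*)$ directly with $T(r_\beta)>0$ (or with $B_\eta(r_\beta)$) and use the equal-amplitude comparison of Lemma~\ref{lem4.2} to absorb the difference. In terms of $\phi$, we can write
$$
Q=r^{n-1}|u'|\phi,\qquad \frac{r^{n-1}v}{u'}=-A,
$$
so $B_\eta$ at the matched pair $r_\pm(\sigma)$ becomes a combination of $\phi$, $M$ and $A$. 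Lemma~\ref{lem4.2} gives $\phi(r_+)>\phi(r_-)$, and $(v/u')'>0$ for $r>\tau$ (from~\eqref{eq3.15}) controls the $A$-terms. If this matching does not close, the fallback is to run the same argument at the level of $T$ itself. The idea is to show that $T(r_+(\sigma))-T(r_-(\sigma))$ has a positive derivative in $\sigma$ whenever it is nonnegative, exactly as in~\eqref{eq4.5}. The starting value at $\sigma=\beta$ would be positive by the integral estimate above, with the pieces before $r_\beta$ excluded. Either route shows that $T(r_*)>0$, contradicting the choice of $r_*$, and hence $T>0$ on $[\bar r_\beta,c_+]$.
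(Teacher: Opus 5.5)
\textbf{There is a genuine gap: the incoming large-amplitude piece is not handled.} Your framework matches the paper's. You choose $\eta=g(\sigma_*)$ so that $B_\eta=T$ at both matched radii, integrate the bridge, and treat $[r_\beta,\bar r_\beta]$ by positivity of the weight and by Lemma~\ref{lem4.3}; your $h-(p-1)F>0$ argument is a valid substitute for \eqref{eq4.11}. But the step you flag as the main obstacle is the heart of the lemma. On the piece from $r_-(\sigma_*)$ to $\min\{\tau,r_\beta\}$, $\Psi$ has no sign, and neither of your routes closes this.

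\emph{Comparing at $r_\beta$ fails.} There $T(r_\beta)=Q(r_\beta)$, but $B_\eta(r_\beta)=Q(r_\beta)-\eta Z(r_\beta)$. When $r_\beta<\tau$ one has $Z(r_\beta)>0$, since $M>0$ and $-r^{n-1}h(u)v/u'>0$. So the bridge only yields $T(r_*)>Q(r_\beta)-\eta Z(r_\beta)$, which has no sign.

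\emph{The fallback fails for a similar reason.} Along matched radii, $\frac{d}{d\sigma}\{T(r_+(\sigma))-T(r_-(\sigma))\}=g'(\sigma)\{Z(r_-(\sigma))-Z(r_+(\sigma))\}$. Nothing links the sign of $Z(r_-)-Z(r_+)$ to that of $T(r_+)-T(r_-)$; there is no analogue of the passage from $\phi$ to $\phi_2$ in Lemma~\ref{lem4.2}.

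\emph{Two smaller problems.} Taking $r_*$ as a first zero of $T$ tells you nothing about the sign of $T'$ on $(\bar r_\beta,r_*)$. The case $r_*=c_+$ is also not harmless, since $B_\eta$ and $\Psi$ contain $1/u'$.

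\textbf{What the paper does instead.} Two ingredients are missing.

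First, every interior critical point $t$ of $T$ on the outgoing arc is a strict minimum. Differentiating $T-B_0=\frac{g(u)}{g'(u)u'}T'$ at $t$ and using $B_0'=-2F(u)\Psi$ gives $T''(t)=h(u)g'(u)u'\Psi>0$. Together with $T'(c_+)>0$, this gives at most one minimum $t_+\in(\bar r_\beta,c_+)$, and it rules out the endpoint $c_+$. It also gives $T'<0$ on $(\bar r_\beta,t_+)$. There $T'=-g'(u)u'Z$ with $g'(u)u'>0$, which forces $Z>0$ on the outgoing part before $t_+$.

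Second, pair the two large-amplitude pieces at equal amplitude instead of estimating the incoming one alone. Take $r_-(\sigma)<\tau$; the case $r_-(\sigma)\ge\tau$ is trivial. On the outgoing side $\omega<0$ and $Z>0$; on the incoming side $\omega>0$ and $Z>0$. The identity $Q-Pv/u=\omega Z$ then gives $|v(r_+)|>\sigma Q(r_+)/P(r_+)$ and $|v(r_-)|<\sigma Q(r_-)/P(r_-)$. Next, $Q_2(r_-)>0$ gives $-Q_n(r_-)<(n-2)r_-^{n-1}|u'(r_-)v(r_-)|$, and $Q(r_+)>0$ gives $Q_n(r_+)>nr_+^{n-1}|u'(r_+)v(r_+)|$. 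Combine these with Lemma~\ref{lem4.2}, the monotonicity of $P/r^n$ from Lemma~\ref{lem3.1}, energy dissipation, and $r_+^{n-3}\ge r_-^{n-3}$ (this uses $n\ge3$). The result is $\Psi(r_+)/|u'(r_+)|+\Psi(r_-)/|u'(r_-)|>0$, which is \eqref{eq4.12}. After changing variables to $\sigma$ as in \eqref{eq4.14}, the incoming and outgoing large-amplitude integrals are jointly positive. This is exactly where the outgoing sign information from the first ingredient is needed, and your first-zero choice of $r_*$ does not supply it.
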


\begin{proof}
On the outgoing large-amplitude interval, $uv>0$, $u'v>0$ in the interior, and $Q_n>0$. At its endpoints,
$$
 T(\bar r_\beta)=Q(\bar r_\beta)>0,
 \qquad T'(c_+)=g'(u(c_+))c_+^{n-1}h(u(c_+))v(c_+)>0.
$$
Let $t$ be an interior critical point of $T$. Then $T'(t)=0$
and $g(u(t))>0$. Differentiating the last identity in \eqref{eq3.16}
at $t$ gives
\[
-B_0'(t)
 =\frac{g(u(t))}{g'(u(t))u'(t)}\,T''(t).
\]
Using $B_0'=-2F(u)\Psi$ and $g=2F/h$, we obtain
$$
 T''(t)=h(u(t))g'(u(t))u'(t)\Psi(t)>0.
 \eqnum{eq4.9}
$$
Indeed $h>0$, $g'(u)u'>0$ and $\Psi>0$. Every critical point is therefore a strict minimum, so there is at most one. If there is none, $T'>0$ throughout, and the assertion follows from its left endpoint. Otherwise denote the unique minimum by $t_+$; then $T'<0$ before $t_+$ and $T'>0$ after it.

Set $a_*=|u(t_+)|>\beta$. By~\eqref{eq3.1} there is a unique incoming radius $t_-\in(c_-,r_\beta)$ with $u(t_-)=a_*$. Let
$$
 \eta=g(a_*)\in\biggl(0,\frac{2}{p-1}\biggr),\qquad
 H_*(s)=-2F_\eta(s)=-2F(s)+\eta h(s).
$$
At both $t_-$ and $t_+$, $F_\eta(u)=0$, so $B_\eta=T$ there. Hence
$$
 T(t_+)-T(t_-)=\int_{t_-}^{t_+}H_*(u)\Psi\dd r.
 \eqnum{eq4.10}
$$
Although this interval crosses possible zeros of $h$, neither the weight nor the bridge has $h$ in a denominator.

We first check the weight. Write
$$
 C_q=\frac{2\eps}{q+1}\biggl(1-\frac{\eta(q-1)}{2}\biggr)\ge0,
 \qquad C_p=\frac{2}{p+1}\biggl(1-\frac{\eta(p-1)}{2}\biggr)>0.
$$
Since $H_*(a_*)=0$, $C_pa_*^{p-1}=1+C_qa_*^{q-1}$. It follows that
\begin{equation}{\label{eq4.11}}
H_*(s)=s^2\biggl(1-\biggl|\frac{s}{a_*}\biggr|^{p-1}\biggr)+C_q|s|^{q+1}
       \biggl(1-\biggl|\frac{s}{a_*}\biggr|^{p-q}\biggr)>0,~~(0<|s|<a_*).
\end{equation}
Next pair the incoming and outgoing large amplitudes. For $\sigma\in[\beta,a_*)$, abbreviate $r_-=r_-(\sigma)$ and $r_+=r_+(\sigma)$. If $r_-\ge\tau$, then $Q_n(r_-),Q_n(r_+)>0$: after $\tau$ this follows from $Q_n=Q_1+(n-1)r^{n-1}u'v$, and at $\tau$ from $Q_n=Q=r^nu'v'>0$. Therefore
$$
 \frac{\Psi(r_+)}{|u'(r_+)|}
 +\frac{\Psi(r_-)}{|u'(r_-)|}>0.
 \eqnum{eq4.12}
$$
If $r_-<\tau$, the incoming signs give $M>0$, $\omega>0$ and $r^{n-1}h(u)v/u'<0$. The first identity in~\eqref{eq3.16} implies
$$
 |v(r_-)|<\frac{\sigma Q(r_-)}{P(r_-)}.
$$
On the outgoing part before $t_+$, $T'<0$ and $g'(u)u'>0$, so $Z>0$ and $\omega<0$. The same identity gives
$$
 |v(r_+)|>\frac{\sigma Q(r_+)}{P(r_+)}.
$$
Also $Q_2(r_-)>0$ and $Q(r_+)>0$ imply
\begin{equation*}
 -Q_n(r_-)<(n-2)r_-^{n-1}|u'(r_-)v(r_-)|,~~~
 Q_n(r_+)>nr_+^{n-1}|u'(r_+)v(r_+)|.
\end{equation*}
Use Lemmas~\ref{lem3.1} and~\ref{lem4.2}, together with strict energy dissipation, to compare the positive factors in
$$
 \begin{aligned}
 \frac{nr_+^{n-2}Q(r_+)}{P(r_+)u'(r_+)^2}
 &=n r_+^{n-3}\frac{\phi(r_+)}{|u'(r_+)|}\frac{r_+^n}{P(r_+)}\\
 &>(n-2)r_-^{n-3}\frac{\phi(r_-)}{|u'(r_-)|}\frac{r_-^n}{P(r_-)}\\
 &=\frac{(n-2)r_-^{n-2}Q(r_-)}{P(r_-)u'(r_-)^2}.
 \end{aligned}
 \eqnum{eq4.13}
$$
Indeed, $r_+>r_-$ and $n\geq3$ imply
$r_+^{n-3}\geq r_-^{n-3}$.
Lemma~\ref{lem4.2} gives $\phi(r_+)>\phi(r_-)>0$, while strict
energy dissipation gives $|u'(r_+)|<|u'(r_-)|$.
Finally, Lemma~\ref{lem3.1} yields $\frac{r_+^n}{P(r_+)}>
\frac{r_-^n}{P(r_-)}$.
These comparisons justify the strict inequality
in~\eqref{eq4.13}. Combining these inequalities yields
 \begin{equation*}
 \frac{\Psi(r_+)}{|u'(r_+)|}
 >\frac{n\sigma r_+^{n-2}Q(r_+)}{P(r_+)u'(r_+)^2}
 >\frac{(n-2)\sigma r_-^{n-2}Q(r_-)}{P(r_-)u'(r_-)^2}
 >-\frac{\Psi(r_-)}{|u'(r_-)|}.
 \end{equation*}
Thus~\eqref{eq4.12} also holds in this case. Changing variables to the matched amplitude now gives
\begin{equation}{\label{eq4.14}}
\int_{t_-}^{r_\beta}H_*(u)\Psi\,dr
       +\int_{\bar r_\beta}^{t_+}H_*(u)\Psi\,dr
=\int_\beta^{a_*}H_*(\sigma)
       \biggl\{\frac{\Psi(r_+(\sigma))}{|u'(r_+(\sigma))|}
             +\frac{\Psi(r_-(\sigma))}{|u'(r_-(\sigma))|}\biggr\}\,d\sigma>0.
 \end{equation}

It remains to treat $[r_\beta,\bar r_\beta]$. If $\tau\le r_\beta$, $Q_n>0$ throughout this interval, and~\eqref{eq4.11} makes its integral positive. If $\tau>r_\beta$, Lemma~\ref{lem4.3} with $a=\eta$ makes the integral from $r_\beta$ to $\tau$ positive; after $\tau$, $Q_n>0$ and the remaining integral is positive as well. Hence the full integral in~\eqref{eq4.10} is positive. Since $T(t_-)>0$ by~\eqref{eq4.3}, $T(t_+)>0$. This is the only possible minimum and proves the lemma.
\end{proof}

\section{Radial nondegeneracy}\label{sec5}

We apply the comparison results of Section 5 to each nodal phase of an arbitrary bound state. This determines the zeros of its shooting variation and, together with its behaviour on the final tail, proves radial nondegeneracy. The argument does not assume uniqueness of the bound state.

\begin{theorem}\label{the5.1}
Let $u$ be a normalized radial $k$-node bound state, $k\ge1$, with $u(0)>0$. Its regular variation~\eqref{eq3.11} has exactly $k+1$ simple zeros on $(0,\infty)$ and satisfies $|v(r)|\to\infty$ as $r\to\infty$. Consequently,
$$
 \ker\bigl(-\Delta+1+\eps q|u|^{q-1}-p|u|^{p-1}\bigr)
             \cap\Hrad=\{0\}.
 \eqnum{eq5.1}
$$
\end{theorem}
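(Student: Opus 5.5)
The argument will be an induction over the phases $[c_{i-1},c_i]$, $i=1,\dots,k$, followed by a separate analysis of the final tail $(c_k,\infty)$.

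\emph{Induction over the finite phases.} Lemma~\ref{lem4.1} gives~\eqref{eq4.3} on the first phase. Lemma~\ref{lem4.4} then yields $T(c_1)>0$, and Lemma~\ref{lem4.1} also gives $Q(c_1),M(c_1)>0$ and $v(c_1)<0$. The inductive hypothesis at a critical point $c_{i-1}$, after reflecting so that $u(c_{i-1})>0$ is a maximum, is
\[
Q,M,T>0,\qquad v>0,\qquad v'<0 \quad\text{at } c_{i-1}.
\]
Note that $M(c_{i-1})=-c_{i-1}^{n-1}u v'>0$ forces $v'<0$. From this I would re-derive~\eqref{eq4.3} on $[c_{i-1},c_i]$ by mimicking Lemma~\ref{lem4.1}, in the following order.
\begin{enumerate}
\item Use $\rho$ on $u\ge b$. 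At $c_{i-1}$ one has $\rho=-r^{n-1}f(u)v'>0$, and $\rho'<0$ there since $u'<0$, $v>0$, $f''>0$. This should not yet force a zero. Instead I will use $T$, with $T'=g'(u)\{r^{n-1}hv-u'M\}$, together with $M>0$ and $Q=T+gM$, to show that $v$ vanishes exactly once, at some $\tau$ with $u(\tau)>b$, with $M>0$ on $(c_{i-1},\tau]$ and $T>0$ on $(c_{i-1},r_\beta]$.
\item Show that the zero $\tau$ exists before $r_b$: if $v>0$ up to $r_b$, then at $r_b$ the quantity $Q=r^n u'v'$ combined with $M>0$ should give a contradiction.
\item Run the remainder exactly as in Lemma~\ref{lem4.1}. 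The monotonicity of $Q$ past $r_b$, the bridge $B_0$ across the node, positivity of $Q_1,Q_2$, and the comparison $Q(\bar r_\beta)>Q(r_\beta)$ all go through verbatim. Here $Q(c_{i-1})>0$ and $Q_2(c_{i-1})=Q(c_{i-1})>0$ replace the zero initial values.
\item Apply Lemma~\ref{lem4.4} to get $T(c_i)>0$, and the $\rho$ argument on $(\bar r_b,c_i)$ to get $M(c_i)>0$. Reflecting then reproduces the inductive hypothesis.
\end{enumerate}
This gives exactly one simple zero of $v$ in each of the $k$ phases.

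\emph{Main obstacle.} Step~1 above, establishing $M>0$ up to $\tau$ and the single zero when starting from a nonzero critical point rather than the origin, is where I expect the real work. There $T_1$ is unavailable on later phases, and one must substitute $T$, using $T(c_{i-1})>0$ and $T'$. The first thing I would try is to argue on a first zero of $M$ while $v<0$: there $Q=T>0$ (if $T$ is still positive) but $Q=P_1v/u<0$. Keeping $T>0$ requires a sign of $Z$, and I would use $T'=-g'u'Z$ with $Z=M-r^{n-1}hv/u'>0$ when $M>0$ and $v>0$.

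\emph{Final tail and nondegeneracy.} After $c_k$, say with $u>0$ decreasing to $0$, we have $v(c_k)<0$ and $M(c_k)>0$. On the region $|u|<b$ we have $f(u)<0$. I will show that $v$ has exactly one more zero and then grows. The Wronskian $M'=r^{n-1}d(u)v$ with $d<0$ for small $u$ makes $M$ increase while $v<0$, and $u$ decays like $e^{-r}$ times a power. If $v$ stayed negative, then $M>0$ with $(v/u)'=-M/(r^{n-1}u^2)$ forces $v/u\to-\infty$ at a rate incompatible with… Instead, it is cleaner to compare $v$ with $u$ directly: the function $v$ solves $v''+\frac{n-1}{r}v'+f'(u)v=0$ with $f'(u)\to-1$, so $v$ is a combination of the decaying and growing modes. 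Since $Q=r^n(u'v'+fv)+\dots$ has a limit $Q(\infty)$, and $Q$ is monotone where $fv$ has fixed sign, I will argue that $Q(\infty)\neq0$, which forces the growing mode to be present and hence $|v|\to\infty$. A zero of $v$ on the tail, obtained from $M$ changing behaviour, gives the $(k+1)$-st zero, and $Q'=2r^{n-1}fv$ changing sign there prevents further zeros. Finally, any radial $H^1$ kernel element is a regular solution of~\eqref{eq3.11} up to a multiple; by the ODE uniqueness at the origin it is $cv$. Since $v$ is unbounded, $c=0$, which gives~\eqref{eq5.1}.
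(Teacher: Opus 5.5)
Your overall structure matches the paper's: you re-derive \eqref{eq4.3} from $Q,M,T>0$ at each nonzero critical point, apply Lemma~\ref{lem4.4}, treat the tail, and identify kernel elements with multiples of $v$. However, the inductive step is not actually carried out, and it breaks down exactly where you expect.

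\emph{Locating $\tau$.} A sign slip makes you discard the argument that works. Since $\rho'=r^{n-1}f''(u)u'^2v$ contains $u'^2$, you have $\rho'>0$, not $<0$, while $v>0$ and $u>b$. Starting from $\rho(c)=-c^{n-1}f(u(c))v'(c)>0$, this contradicts $\rho(r_b)=r_b^{n-1}f'(b)u'(r_b)v(r_b)\le0$ exactly as in Lemma~\ref{lem4.1}, so $\tau\in(c,r_b)$. Your Step~2 substitute gives no contradiction. If $v>0$ on $(c,r_b]$, then $Q'=2r^{n-1}f(u)v>0$ and $M'=r^{n-1}d(u)v>0$, so $Q(r_b),M(r_b)>0$ is perfectly consistent. (Also, $Q(r_b)=r_b^nu'v'$ would need $v(r_b)=0$.)

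\emph{The main obstacle, on $[\tau,r_\beta]$.} On $[c,\tau]$ all signs are immediate. The real issue is $[\tau,r_\beta]$ when $\tau<r_\beta$, where $v<0$. There $Z=M-r^{n-1}h(u)v/u'$ is $M$ minus a positive quantity. Your route to $Z>0$, which needs $v>0$, therefore gives no sign for $T'$. Lemma~\ref{lem4.1}'s passage through $r_b$ (via $T_1$ and $M>0$ up to $r_b$) has no analogue either, because $T_1(c)=Q(c)-g_1M(c)$ has no sign. So Step~3 is not verbatim.

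The missing idea is to argue at the first zero $t\in(\tau,r_\beta]$ of $T'$, i.e.\ of $Z$. There $T(t)>T(\tau)>0$. Yet $Z(t)=0$ gives $M(t)=t^{n-1}h(u)v/u'>0$, and $Q-Pv/u=\omega Z$ gives $Q(t)=P(t)v(t)/u(t)<0$. Hence $T(t)=Q-g(u)M<0$, because $g\ge0$ for $u\ge\beta$. A second zero $\sigma$ of $v$ before $r_\beta$ has $Z(\sigma)=M(\sigma)=-\sigma^{n-1}u(\sigma)v'(\sigma)<0$, so it would be preceded by such a $t$.

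Since $g$ changes sign at $\beta$, this reaches only $r_\beta$. The rest of the phase is handled by starting the bridge $B_0$ at $R_0=\max\{\tau,r_\beta\}$, where $B_0(R_0)=Q(R_0)>0$. One then propagates $Q_1>0$ and $v<0$ up to $\bar r_\beta$ via \eqref{eq4.2}, rather than starting at the node.

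\emph{The tail.} It inherits both gaps. After reflecting to $u>0$ you should have $v(c_k)>0$, not $v(c_k)<0$, since $Q(c_k)=c_k^nf(u(c_k))v(c_k)>0$. So the last zero lies in $\{u>b\}$ and comes from the $\rho$ argument plus the threshold argument above, not from ``$M$ changing behaviour''. It is the only zero because you can run the bridge from $R_0$ to $\infty$. While $Q_1>0$ and $v<0$, you have $Q_n>0$, so $B_0$ increases. At a first zero of $Q_1$, however, $B_0=-2r^{n-1}(v/u')E<0$, and a return of $v$ to zero would force an earlier zero of $Q_1$.

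\emph{Growth of $v$.} Your claim $Q(\infty)\neq0$ is unsupported. Since $F(u)<0$ on the whole tail, there is no point where $Q=B_0$, and $B_0>0$ does not imply $Q>0$. Use instead $B_0(r)\ge B_0(R_0)>0$. The flux identity makes $v$ eventually monotone, so if $|v|$ does not tend to infinity, $v$ is bounded with $v'=O(r)$. Then the exponential decay of $u,u'$, together with $|F(u)/u'|\le C|u|$, forces $B_0\to0$. This also spares you the unperformed growing/decaying-mode asymptotics.

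\emph{Kernel step.} Before $w=w(0)v$ and $|v|\to\infty$ give $w\equiv0$, you still need $w$ regular at the origin. You also need $w$ bounded at infinity, for instance via the radial estimate $|w(r)|\le Cr^{(1-n)/2}\|w\|_{H^1}$.
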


\subsection{Starting from a nonzero critical point}
We show that the sign conditions obtained at a nonzero critical point persist through the next nodal phase. This allows the comparison results of Section 5 to be applied successively and also determines the sign of the variation after its last zero on the final tail.

\begin{proposition}\label{pro5.1}
Suppose $c>0$ is a critical point of $u$ and
$$
 Q(c)>0,\qquad M(c)>0,\qquad T(c)>0.
 \eqnum{eq5.2}
$$
If the next critical point is $d>c$, then the phase $[c,d]$ satisfies~\eqref{eq4.3} after simultaneous reflection if necessary, and
$$
 Q(d)>0,~~M(d)>0,~~T(d)>0.
 \eqnum{eq5.3}
$$
If $c$ is the last critical point, $v$ has exactly one further zero and, after the same reflection, is negative thereafter.
\end{proposition}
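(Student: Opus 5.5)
The plan is to re-run the proof of Lemma~\ref{lem4.1} on the phase $[c,d]$, with the data~\eqref{eq5.2} at $c$ in place of the regular initial conditions at the origin. Then Lemmas~\ref{lem4.2}--\ref{lem4.4} are invoked to get $T(d)>0$.

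\textbf{Step 1: signs at $c$.} After reflection, $u(c)>\beta>b$ is a maximum. Since $u'(c)=0$, we have $Q(c)=c^nf(u(c))v(c)$ and $M(c)=-c^{n-1}u(c)v'(c)$. With $f(u(c))>0$, condition~\eqref{eq5.2} gives $v(c)>0$ and $v'(c)<0$.

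\textbf{Step 2: the first zero $\tau$ of $v$.} I use the $\rho$-argument. We have $\rho(c)=-c^{n-1}f(u(c))v'(c)>0$, and $\rho'>0$ while $v>0$ and $u>b$. Since $\rho(r_b)\le0$ whenever $v(r_b)\ge0$, the first zero $\tau$ of $v$ lies in $(c,r_b)$, it is simple, and $u(\tau)>b$.

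\textbf{Step 3: signs on $(c,\tau)$.} The flux $(r^{n-1}v')'=-r^{n-1}f'(u)v<0$ with $v'(c)<0$ gives $v'<0$ there. Then $Q'=2r^{n-1}f(u)v>0$ and $M'=r^{n-1}d(u)v>0$, so $Q,M>0$. From~\eqref{eq3.13}, $T'=g'(u)\{r^{n-1}h(u)v-u'M\}>0$, so $T>0$ on $(c,r_\beta]$ when $\tau\ge r_\beta$. Since $Q_2'=2r^{n-1}u'v'>0$ there, $Q_2>0$ and $Q_1>Q_2>0$.

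\textbf{Step 4: $M>0$ and $v<0$ up to $r_b$.} Here I use the identity
\[
Q=\frac{P_1v}{u}+\omega M,
\]
obtained from~\eqref{eq3.16} and the definition of $Z$. At a first zero of $M$ with $v<0$ this gives $Q<0$ by Lemma~\ref{lem3.1}. So it suffices to keep $Q>0$ until $r_b$.

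\emph{This is the main obstacle.} In Lemma~\ref{lem4.1} this step used $T_1$ with $T_1(0)=0$, but here $T_1(c)$ has no known sign. My first attempt is to work with $T$ instead. On $(\tau,\cdot)$ write $T'=-g'(u)u'Z$. At a first zero $r_0$ of $M$ we have $T(r_0)=Q(r_0)<0$. Meanwhile $T(\tau)=Q(\tau)-g(u(\tau))M(\tau)$, and I would show it is positive through $T>0$ on the part with $v>0$. I would then derive a contradiction by showing that $Z$ cannot change sign before $r_0$, using the sign of $\omega$ and Lemma~\ref{lem3.1}. If that fails, I would use a bridge $B_\eta$ with $\eta=g(u(c))$, as in Lemma~\ref{lem4.4}, to transport $Q>0$. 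A second zero of $v$ cannot come first, since it would force $M<0$ earlier.

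\textbf{Step 5: the rest of the phase.} I copy Lemma~\ref{lem4.1} verbatim:
\begin{itemize}
\item $Q'>0$ on $(r_b,z)$ keeps $v<0$;
\item $B_0$ keeps $Q_1>0$ across the node up to $\bar r_\beta$;
\item $Q'>0$ keeps $Q>0$ up to $d$;
\item the $B_0$ computation yields $Q(\bar r_\beta)>Q(r_\beta)$.
\end{itemize}
This establishes all of~\eqref{eq4.3}. The final $\rho$-argument on $(\bar r_b,d)$ gives $v'(d)>0$, hence $M(d)>0$, and $Q(d)>0$ as before. Lemma~\ref{lem4.4} then gives $T(d)>0$, which is~\eqref{eq5.3}.

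\textbf{Step 6: the final tail.} If $c$ is the last critical point, Steps 1--4 are unchanged and produce exactly one zero $\tau$ with $v<0$ up to $r_b$. The sign $v<0$ persists through the node and the whole monotone tail towards zero. I would show this via $Q_1>0$ and the $B_0$ argument of~\eqref{eq4.2}, which used only $E>0$ and $F(u)<0$ and therefore applies on the entire tail. A further zero of $v$ would force $Q_1=r^nu'v'<0$ first, which is impossible.
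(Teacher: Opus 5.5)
\textbf{There is a genuine gap, at the step you flag yourself, and Steps 5 and 6 rest on it.}

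\emph{Why Step 4 cannot be closed.} You try to carry $v<0$, $M>0$ and $Q>0$ all the way to $r_b$, as in Lemma~\ref{lem4.1}. Step 5 then needs $Q(r_b)>0$ to start its chain: first $Q'>0$ on $(r_b,z)$, then $B_0(z)=Q(z)>0$. That value is not available.
\begin{itemize}
\item On $(\tau,r_b)$ one has $Q'=2r^{n-1}f(u)v<0$, so $Q$ decreases there.
\item In Lemma~\ref{lem4.1}, $Q(r_b)=T_1(r_b)>0$ came from $T_1(0)=0$, which has no analogue at $c$.
\item Your substitute argument with $T$ works only while $g(u)\ge0$. Let $t$ be a first zero of $Z$ (equivalently of $T'$) with $v(t)<0$. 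Then $M(t)=t^{n-1}h(u)v/u'>0$ and, by \eqref{eq3.16}, $Q(t)=P(t)v(t)/u(t)<0$. The resulting $T(t)<0$ contradicts $T(t)>T(\tau)>0$ only if $g(u(t))\ge0$, i.e.\ $u(t)\ge\beta$.
\item On $(r_\beta,r_b)$ we have $b<u<\beta$, hence $g<0$, and $T(t)=Q(t)+|g|M(t)$ has no determined sign.
\item The fallback with $B_\eta$, $\eta=g(u(c))$, would need a sign for $\Psi$ on $(c,\tau)$. There $Q>0$ but $u'v<0$, so $Q_n$ is indefinite; this is exactly why Lemma~\ref{lem4.3} is needed.
\end{itemize}
So the passage from $r_\beta$ through $r_b$ and across the node is unproved, and the final-tail case inherits the same dependency.

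\emph{The missing idea.} You need neither $M>0$ beyond $\tau$ nor $Q>0$ on $(r_\beta,\bar r_\beta)$; \eqref{eq4.3} asks for neither.

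First, when $\tau<r_\beta$, restrict your $T$-argument to $[\tau,r_\beta]$, where $g\ge0$:
\begin{itemize}
\item It gives $T'>0$, hence $T>0$ there and $Q(r_\beta)=T(r_\beta)>0$. Since $Q$ rises on $(c,\tau)$ and falls on $(\tau,r_\beta)$, also $Q>0$ on $(c,r_\beta]$.
\item A second zero $\sigma$ of $v$ before $r_\beta$ cannot come first. There $Z(\sigma)=M(\sigma)=-\sigma^{n-1}u(\sigma)v'(\sigma)<0$, while $Z(\tau)=M(\tau)>0$, so $Z$ would have vanished earlier. The paper instead uses $T=r^{n-1}uv'\{g(u)-\omega\}$ at zeros of $v$, with $g(u)$ decreasing and $\omega$ increasing.
\end{itemize}

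Second, start the regular bridge $B_0$ at $R_0=\max\{\tau,r_\beta\}$ rather than at the node. There $B_0(R_0)=Q(R_0)>0$, because either $v=0$ or $F(u)=0$ at $R_0$. Since $F(u)\le0$ on all of $(r_\beta,\bar r_\beta)$, including the parts with $b<|u|<\beta$, the first-zero-of-$Q_1$ argument based on \eqref{eq4.2} does the rest:
\begin{itemize}
\item it keeps $v<0$ and $Q_1>0$ up to $\bar r_\beta$;
\item it gives $Q(\bar r_\beta)=B_0(\bar r_\beta)>0$;
\item integrating $B_0'\ge0$ gives $Q(\bar r_\beta)>Q(r_\beta)$, using $Q(\tau)>Q(r_\beta)$ when $\tau>r_\beta$.
\end{itemize}
For the last critical point the same argument runs to infinity. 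Note also that the final tail contains no node.

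With this replacement, the rest of your outline goes through: $Q'>0$ on $[\bar r_\beta,d]$, the $\rho$-argument giving $v'(d)>0$ and $M(d)>0$, and Lemma~\ref{lem4.4} giving $T(d)>0$.
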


\begin{proof}
Reflect to arrange $u(c)>\beta$. Then $u'<0$ until the next critical point, or on the entire final tail. From the endpoint identities
$$Q(c)=c^nf(u(c))v(c),~~M(c)=-c^{n-1}u(c)v'(c),$$
we have $v(c)>0$ and $v'(c)<0$. Let $r_b$ be the subsequent radius at which $u=b$. If $v$ had no zero in $(c,r_b)$, then $\rho(c)>0$ and $\rho'>0$ up to $r_b$, while $\rho(r_b)\le0$ as in Lemma~\ref{lem4.1}. Thus its first zero $\tau$ lies in $(c,r_b)$, with $v'(\tau)<0$. On $(c,\tau)$, one has $u>b$ and $v>0$. Hence
$$(r^{n-1}v')'=-r^{n-1}f'(u)v<0,$$
so $v'<0$ throughout $[c,\tau]$. The identities in~(4.13)
give $Q>0$ and $M>0$ on $[c,\tau]$. Moreover,
$Q_1(c)=Q_2(c)=Q(c)>0$, while
$$Q_1'=r^{n-1}\{u'v'+f(u)v\}>0,~~Q_2'=2r^{n-1}u'v'>0$$
on $(c,\tau)$. Thus $Q_1,Q_2>0$ on $[c,\tau]$.

Since $u(\tau)>b$, the quantity $T$ is defined throughout
this interval. By~(4.13),
$$T'=g'(u)\left\{r^{n-1}h(u)v-u'M\right\}>0~~\text{on }[c,\tau].$$
Indeed, in the interior both terms in braces are positive;
at $c$ the first term is positive, and at $\tau$ the second
term is positive. Consequently, $T(r)\geq T(c)>0$ on
$[c,\tau]$. This argument uses $g'(u)>0$ and $h(u)>0$
for $u\geq b$, but does not require $g(u)\geq0$.

\emph{The incoming energy threshold.}
Let $r_\beta^-$ satisfy $u(r_\beta^-)=\beta$. We claim that $T'>0$ and $T>0$ on $[c,\max\{\tau,r_\beta^-\}]$. Only the case $\tau<r_\beta^-$ needs proof. On $[\tau,r_\beta^-]$, $g\ge0$, $g'>0$, $h>0$, and $\omega'>0$. At any zero of $v$,
$$
 T=r^{n-1}uv'\{g(u)-\omega\}.
 \eqnum{eq5.4}
$$
At $\tau$, $T>0$ and $v'<0$, hence $g(u(\tau))<\omega(\tau)$. Along the incoming arc $g(u)$ strictly decreases and $\omega$ strictly increases. A second zero $\sigma\le r_\beta^-$, with $v'(\sigma)>0$, would therefore give $T(\sigma)<0$. Since $T(\tau)>0$ and $T'(\tau)>0$, there would first be a point $t\in(\tau,\sigma)$ with $T(t)>0$, $T'(t)=0$ and $v(t)<0$. At that point~\eqref{eq3.16} gives
$$
 M(t)=\frac{t^{n-1}h(u(t))v(t)}{u'(t)}>0,
 \qquad Q(t)=P(t)\frac{v(t)}{u(t)}<0,
$$
so $T(t)=Q(t)-g(u(t))M(t)<0$, a contradiction. This excludes a second zero of $v$. Suppose that $t\in(\tau,r_{\beta^-}]$ is the first zero of $T'$.
Then $T(t)>T(\tau)>0$. Since $v(t)<0$, the identity $T'(t)=0$
in \eqref{eq3.13} gives
$$M(t)=\frac{t^{n-1}h(u(t))v(t)}{u'(t)}>0.$$
The first identity in \eqref{eq3.16} consequently yields
$$Q(t)=P(t)\frac{v(t)}{u(t)}<0.$$
As $g(u(t))\geq0$, we obtain
$T(t)=Q(t)-g(u(t))M(t)<0$, a contradiction.
Thus $T'>0$ and $T>0$ throughout $[\tau,r_{\beta^-}]$,
which proves the claim.

Put $R_0=\max\{\tau,r_\beta^-\}$. If $\tau<r_\beta^-$, $Q$ first increases and then decreases, ending at $Q(r_\beta^-)=T(r_\beta^-)>0$. If $\tau\ge r_\beta^-$, it increases throughout $[c,\tau]$. Thus $Q>0$ on $[c,R_0]$. The already proved signs of $Q_1,Q_2$ before $\tau$, and $u'v>0$ after it, imply $Q_1,Q_2>0$ there as well.

\emph{The low-amplitude passage.}
Suppose first that the next critical point $d$ exists. Let $z$ be the node and $r_\beta^+\in(z,d)$ the radius with $u=-\beta$. On $(r_\beta^-,r_\beta^+)$, one has $F(u)\leq0$,
with equality only at the node $z$. Since $u'<0$
throughout this interval, the regular bridge identity gives
$$B_0'=-2F(u)\frac{Q_n}{r(u')^2}.$$
At $R_0$ either $v=0$ or $F=0$, and therefore $B_0(R_0)=Q(R_0)>0$. If $Q_1>0$ or $v<0$ first failed before $r_\beta^+$, a zero $s$ of $Q_1$ would occur first: a return of $v$ to zero would have $v'>0$ and hence $Q_1=r^nu'v'<0$. Before $s$,
$$
 Q_n=Q_1+(n-1)r^{n-1}u'v>0,
$$
so $B_0(s)>B_0(R_0)>0$. But at $Q_1(s)=0$, equation~\eqref{eq4.2} gives $B_0(s)<0$. Thus $v<0$ and $Q_1>0$ persist to $r_\beta^+$, and $Q(r_\beta^+)=B_0(r_\beta^+)>0$.

On $[r_\beta^+,d]$, $Q'>0$ while $v<0$. A further zero of $v$ would give $Q<0$ in the interior, or $Q(d)=0$ at the endpoint, so none exists. This also gives $Q_1,Q_2>0$ on the remainder of the phase. Integrating $B_0'$ gives $Q(r_\beta^+)>Q(r_\beta^-)$ when $\tau\le r_\beta^-$. When $\tau>r_\beta^-$, first use $Q(\tau)>Q(r_\beta^-)$, then integrate from $\tau$ using $B_0(\tau)=Q(\tau)$. This proves the same strict threshold comparison. We have now verified all assumptions in~\eqref{eq4.3}.

\emph{The outgoing endpoint.}
Let $r_{-b}\in(z,d)$ satisfy $u=-b$. As in Lemma~\ref{lem4.1}, $\rho(r_{-b})>0$ and $\rho'>0$ up to $d$. Therefore
$$
 0<\rho(d)=-d^{n-1}f(u(d))v'(d),
 \qquad v'(d)>0,
$$
and $M(d)>0$. We already have $Q(d)>0$. All the inputs for Lemma~\ref{lem4.4} have been established without using its conclusion, so that lemma yields $T(d)>0$ and completes~\eqref{eq5.3}.

If $c$ is the last critical point, the incoming argument is unchanged. On the final tail, replace $r_\beta^+$ by infinity in the first-zero argument for $Q_1$. It proves $v<0$ after $\tau$ and $Q_1>0$ at all finite subsequent radii. Thus there is exactly one further zero of $v$.
\end{proof}

The endpoint recovery of $M$ uses $\rho$ only on $|u|\ge b$. No sign of $M$ is assumed on the low-amplitude passage; the negative lower-power contribution to $d(u)$ prevents such a monotonicity argument there.

\subsection{The final tail and the kernel}
It remains to control the variation on the final tail. We prove that its absolute value tends to infinity and then use this fact to exclude nonzero radial kernel elements, completing the proof of Theorem \ref{the5.1}.

\begin{lemma}\label{lem5.1}
Under the last-critical-point alternative in Proposition~\ref{pro5.1}, $|v(r)|\to\infty$ as $r\to\infty$.
\end{lemma}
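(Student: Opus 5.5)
Reflect so that $u>0$ and $u'<0$ on the final tail $(c_k,\infty)$. By Proposition~\ref{pro5.1}, $v<0$ after its last zero $\tau$, and $Q_1>0$ at all finite radii beyond $\tau$. The plan is to turn the positivity of $Q_1$ into monotonicity of $v/u'$ and then into growth of $|v|$. By~\eqref{eq3.15},
$$
 (v/u')'=\frac{Q_1}{r^nu'^2}>0\qquad(r>\tau).
$$
Both $v$ and $u'$ are negative there, so $v/u'>0$ is strictly increasing. Fix $r_1>\tau$ and set $\kappa=v(r_1)/u'(r_1)>0$. This gives
$$
 |v(r)|\ge\kappa\,|u'(r)|\qquad(r\ge r_1).
$$
This bound alone only says that $|v|$ dominates a decaying function. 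We therefore need a lower bound for $v/u'$ that grows faster than $|u'|$ decays.

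Plan A uses the constancy of the Wronskian-type flux. Both $v$ and $u$ solve linear equations with the same leading part. The function $w=u'$ satisfies
$$
 w''+\frac{n-1}{r}w'+f'(u)w=\frac{n-1}{r^2}w,
$$
which is the linearized equation with an extra positive potential. A second natural companion is $M=r^{n-1}(u'v-uv')$, which obeys $M'=r^{n-1}d(u)v$. On the tail $u\to0$, so $d(u)=uf'(u)-f(u)\sim -\eps(q-1)u^q+(p-1)u^p$ is of higher order. The linearized equation is asymptotically $v''+\frac{n-1}{r}v'-v=0$, whose solutions are combinations of the decaying mode $\sim r^{-(n-1)/2}e^{-r}$ and the growing mode $\sim r^{-(n-1)/2}e^{r}$.

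The key step is to show that $v$ is not asymptotic to the decaying mode. This is where $Q_1>0$ enters quantitatively. Near infinity the exponential decay from Lemma~\ref{lem2.1} gives $u'\approx -u$ up to lower order, and both are $\asymp r^{-(n-1)/2}e^{-r}$. If $v$ were the decaying solution, then $v=\gamma u'(1+o(1))$ and similarly for derivatives, by standard Levinson-type asymptotics for the perturbed equation, with integrable perturbation $f'(u)+1=O(u^{q-1})$. I would then compute $Q_1=r^n(u'v'+f(u)v)+(n-1)r^{n-1}u'v$ asymptotically. With $v\approx\gamma u'$ the leading terms $r^n(u'v'-uv)$ cancel to order $r^{n}\cdot O(r^{-1})e^{-2r}r^{-(n-1)}$, leaving $Q_1=O(r^{n-1}e^{-2r}r^{-(n-1)})\to0$. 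The positive increasing quotient $v/u'$ would then converge to the finite limit $\gamma$. That by itself is not a contradiction, so a sign argument is needed. Differentiating $Q_1$ gives
$$
 Q_1'=r^{n-1}\{u'v'+f(u)v\}+(n-1)\bigl((n-1)r^{n-2}u'v+r^{n-1}(u''v+u'v')\bigr),
$$
and I would check that $Q_1\to0$ while $Q_1>0$ forces $Q_1'<0$ eventually, which is compatible. So the cleaner route is Plan B.

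Plan B is a direct comparison. Set $y=v/u'$. The equations give
$$
 (r^{n-1}u'^2y')'=\frac{n-1}{r^2}\,r^{n-1}u'^2y\cdot(-1)\cdot(\text{sign})
$$
up to sign, which we compute precisely. Since $u'$ satisfies the linearized equation with the extra term $\frac{n-1}{r^2}u'$, reduction of order gives
$$
 (r^{n-1}u'^2y')'=r^{n-1}u'^2\frac{n-1}{r^2}\,y>0,
$$
using that $y>0$ eventually. Hence $r^{n-1}u'^2y'$ is increasing. It is positive at $r_1$ because it equals $Q_1/r>0$. So
$$
 y'(r)\ge\frac{c_0}{r^{n-1}u'(r)^2}\qquad(c_0>0).
$$
Since $|u'|\le Ce^{-cr}$, the right side grows exponentially, so $y\ge c_1r^{1-n}u'^{-2}$ for large $r$. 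Then
$$
 |v|=y|u'|\ge \frac{c_1}{r^{n-1}|u'|}\to\infty.
$$
The only point to verify is the reduction-of-order identity and the sign of the $(n-1)/r^2$ term. Differentiating $u''+\frac{n-1}{r}u'+f(u)=0$ gives $(u')''+\frac{n-1}{r}(u')'-\frac{n-1}{r^2}u'+f'(u)u'=0$. This produces exactly the stated identity with a positive right side, and its sign is the expected main obstacle to check carefully.
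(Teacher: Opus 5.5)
Your Plan~B has a genuine gap at exactly the sign you flagged: the sign is the wrong way round. Writing $v=yu'$ and using $(u')''+\frac{n-1}{r}(u')'+f'(u)u'=\frac{n-1}{r^2}u'$, the equation for $v$ becomes
\[
0=\frac{(r^{n-1}u'^2y')'}{r^{n-1}u'}+\frac{n-1}{r^2}\,y\,u',
\qquad\text{i.e.}\qquad
(r^{n-1}u'^2y')'=-(n-1)r^{n-3}u'^2y .
\]
The same follows from \eqref{eq3.13}. Since $r^{n-1}u'^2y'=Q_1/r$ and $Q_1'=r^{n-1}(u'v'+f(u)v)$, one gets $(Q_1/r)'=-(n-1)r^{n-3}u'v<0$ on the tail, where $u'v>0$. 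So $Q_1/r$ is decreasing, and positivity of $Q_1$ gives only $y'>0$. That is the bound $|v|\ge\kappa|u'|$ you already saw is insufficient, and $y'\ge c_0/(r^{n-1}u'^2)$ does not follow.

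Sharper bookkeeping cannot fix this. Suppose $v<0$ stayed bounded on the tail. Then $v'=O(r)$, so $Q_1/r$ would decrease to $0$ and would therefore be positive automatically. Hence $Q_1>0$ carries no information that excludes the bounded or decaying scenario. Plan~A stalls at the same point, as you noticed.

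What is missing is a quantity that is \emph{nondecreasing} on the tail, starts positive, and must tend to zero if $v$ stays bounded. The paper uses the regular bridge $B_0=Q-2F(u)\,r^{n-1}v/u'$ of \eqref{eq3.14}, with $R_0=\max\{\tau,r_\beta^-\}$. At $R_0$ either $v=0$ or $F(u)=0$, so $B_0(R_0)=Q(R_0)$, and $Q(R_0)>0$ by the proof of Proposition~\ref{pro5.1}. For $r>R_0$ one has $F(u)<0$ and $Q_n=Q_1+(n-1)r^{n-1}u'v>0$. Hence $B_0'=-2F(u)Q_n/(ru'^2)>0$ and $B_0\ge Q(R_0)>0$ there.

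The rest of the argument then runs as follows:
\begin{itemize}
\item At large $r$, $f'(u)<0$, so $(r^{n-1}v')'=-r^{n-1}f'(u)v<0$ and $v$ is eventually monotone.
\item If $|v|\not\to\infty$, then $v$ is bounded and $v'=O(r)$.
\item With the exponential decay of $u,u'$ and $|u'|\ge\kappa_0|u|$ (hence $|F(u)/u'|\le C|u|$), every term of $B_0$ tends to zero.
\end{itemize}
This contradicts $B_0\ge Q(R_0)>0$. The information that goes beyond $Q_1>0$ is the positive value $Q(R_0)$, carried forward by the monotone bridge.
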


\begin{proof}
After reflection, $u>0$, $u'<0$ on the final tail, and the preceding proposition gives $v<0$ after its last zero and $Q_1>0$. At $R_0=\max\{\tau,r_\beta^-\}$, $B_0(R_0)=Q(R_0)>0$. For $r>R_0$, $F(u)<0$ and $Q_n>0$, hence
$$
 B_0(r)\ge B_0(R_0)>0.
 \eqnum{eq5.5}
$$
For large $r$, $f'(u(r))<0$. The variation flux then satisfies
$$
 (r^{n-1}v')'=-r^{n-1}f'(u)v<0.
$$
It can change sign at most once, so $v$ is eventually monotone. If $|v|$ does not tend to infinity, eventual monotonicity
implies that $v$ is bounded. Fix a sufficiently large $R$.
Integrating the variation equation gives
$$r^{n-1}v'(r)=R^{n-1}v'(R)-\int_R^r t^{n-1}f'(u(t))v(t)\,dt.$$
Since $f'(u(t))$ and $v(t)$ are bounded on $[R,\infty)$,
the right-hand side is $O(r^n)$. Hence $v'(r)=O(r)$.

By Lemma~\ref{lem2.1}, $|u|+|u'|\le Ce^{-\kappa r}$ for some $\kappa>0$. Positive energy and $F(s)=-s^2/2+o(s^2)$ give
$$
 |u'|\ge\kappa_0|u|,
 \qquad \biggl|\frac{F(u)}{u'}\biggr|\le C|u|
 \quad\text{at large }r
$$
for a positive $\kappa_0$. If $v$ were bounded, every term defining $Q$ would consequently tend to zero, including $r^nu'v'$ because $v'=O(r)$. The other term in $B_0$ would also tend to zero. Thus $B_0(r)\to0$, contradicting~\eqref{eq5.5}.
\end{proof}

\begin{proof}[Proof of Theorem~\ref{the5.1}]
Lemma~\ref{lem4.1} and Lemma~\ref{lem4.4} give exactly one zero of $v$ in the first phase and $Q(c_1),M(c_1),T(c_1)>0$. Apply Proposition~\ref{pro5.1} successively at $c_1,\ldots,c_{k-1}$. Each finite phase adds exactly one simple zero and preserves the three positive endpoint quantities. Its last alternative applies at $c_k$, giving exactly one final zero. Lemma~\ref{lem5.1} gives $|v(r)|\to\infty$. Simplicity of every zero of $v$ follows from uniqueness for its linear equation at positive radii.

Let $w\in\Hrad$ be a weak kernel element. The potential is bounded and continuous. Local elliptic regularity, iterated from $H^1$ through $W^{2,t}$, makes $w$ locally bounded and then continuously differentiable; since the potential is locally H\"older continuous, $w$ is $C^2$. In particular its radial profile is regular at the origin, with $w'(0)=0$. The regular integral equation for the linearized equation gives $w=w(0)v$.

To see that $w$ decays, the radial $H^1$ estimate gives
$$
 |w(r)|\le C r^{(1-n)/2}\|w\|_{H^1(\R^n)}\quad(r\ge1),
$$
so $w(r)\to0$. This estimate follows, for smooth radial functions and then by approximation, by integrating $(r^{n-1}w^2)'$ from $r$ to infinity and applying the Cauchy--Schwarz inequality to the weighted $L^2$ norms of $w$ and $w'$. At large radii the coefficient in $-\Delta w+c(r)w=0$ satisfies $c(r)\ge1/2$. For any $0<\gamma<1/\sqrt2$, the radial function $Ce^{-\gamma(r-R)}$ is a positive supersolution there, since
$$
 (-\Delta+c(r))e^{-\gamma(r-R)}
 =\biggl(c(r)-\gamma^2+\frac{(n-1)\gamma}{r}\biggr)e^{-\gamma(r-R)}>0.
$$
Choose $C\ge|w(R)|$ and compare separately with $w$ and $-w$. The maximum principle on the exterior domain applies because all three functions tend to zero; a positive excess would attain its maximum at a finite interior point. Hence $w$ decays exponentially. This contradicts $w=w(0)v$ unless $w(0)=0$, and the regular initial value problem then gives $w\equiv0$.
\end{proof}

\section{Global continuation and uniqueness}\label{sec6}

We now combine radial nondegeneracy with the compactness theorem to prove global uniqueness. First, we obtain local solution branches and show that their nodal numbers are preserved. We then count all solutions with a fixed nodal number as the parameter varies.

\subsection{A compact Fredholm formulation}

To apply the implicit function theorem, we formulate the equation on the space of continuous radial functions vanishing at infinity. In this subsection, we show that its linearization is a Fredholm operator of index zero and use radial nondegeneracy to obtain its invertibility at every nodal bound state.

Let $X=\Xrad$, with the uniform norm, and let $R_0=(-\Delta+1)^{-1}$ be the Bessel-potential resolvent on $C_0(\R^n)$. Define
$$
 \mathcal T(\eps,u)
 =u-R_0\bigl(|u|^{p-1}u-\eps|u|^{q-1}u\bigr),
 \qquad (\eps,u)\in\R\times X.
 \eqnum{eq6.1}
$$
The map is $C^1$: the resolvent is bounded on $C_0$, and the scalar powers have uniformly continuous derivatives on bounded amplitude intervals. A bound state is a zero of $\mathcal T$. At a fixed state $U$,
$$
 D_u\mathcal T=I-R_0M_a,
 \qquad a=p|U|^{p-1}-\eps q|U|^{q-1}\in C_0(\R^n),
 \eqnum{eq6.2}
$$
where $M_a$ is multiplication by $a$.

\begin{lemma}\label{lem6.1}
The operator $R_0M_a:X\to X$ is compact. The operator $D_u\mathcal T$ is Fredholm of index zero, and its kernel agrees with the radial $H^1$ kernel of $-\Delta+1-a$. In particular it is invertible at every $k$-node state with $k\ge1$.
\end{lemma}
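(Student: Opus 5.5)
The argument has three parts: compactness of $R_0M_a$ on $X$, the Fredholm property with a kernel identification, and invertibility. Each reduces to standard properties of the Bessel kernel together with Theorem~\ref{the5.1}.

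\emph{Compactness.} Let $G$ be the Bessel kernel of $(-\Delta+1)^{-1}$. It is positive, radial, integrable, and decays exponentially, so $R_0w=G*w$ maps $C_0$ into $C_0$ with $\|R_0w\|_\infty\le\|G\|_{L^1}\|w\|_\infty$. It also preserves radial symmetry. Take a bounded sequence $w_j$ in $X$, so that $|a w_j|\le C|a|$ with $a\in C_0$.

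First, the sequence $R_0(aw_j)$ is equicontinuous. This holds because $\|G(\cdot-h)-G\|_{L^1}\to0$ as $h\to0$.

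Second, it is uniformly small at infinity. Fix $\delta>0$ and choose $\rho$ with $|a|\le\delta$ outside $B_\rho$. Splitting the convolution at $B_\rho$ gives
$$
 |R_0(aw_j)(x)|\le C\delta\|G\|_{L^1}+C\|a\|_\infty\int_{|y-x|\ge|x|-\rho}G(y-x)\dd y ,
$$
and the last integral tends to zero as $|x|\to\infty$.

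Arzel\`a--Ascoli on growing balls, combined with this uniform tail control, gives a subsequence converging uniformly on $\R^n$. Hence $R_0M_a$ is compact.

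\emph{Fredholm property and kernel.} Since $R_0M_a$ is compact, the Riesz--Schauder theory gives that $I-R_0M_a$ is Fredholm of index zero. For the kernel, suppose $w\in X$ satisfies $w=R_0(aw)$. Then $aw\in C_0\cap L^\infty$. Because $a$ decays exponentially by Theorem~\ref{thm:compactness} (or Lemma~\ref{lem2.1}), $aw\in L^2$, so $w=R_0(aw)\in H^2\subset H^1$. Moreover $(-\Delta+1)w=aw$ holds in the distributional sense, and $w$ is radial.

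Conversely, take a radial $H^1$ weak solution $w$ of $(-\Delta+1-a)w=0$. The regularity and decay argument in the proof of Theorem~\ref{the5.1} shows that $w$ is continuous and exponentially decaying, so $w\in X$. Applying $R_0$, i.e.\ using uniqueness of tempered solutions of $(-\Delta+1)\psi=aw$, gives $w=R_0(aw)$. The two kernels therefore coincide.

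\emph{Invertibility.} At a $k$-node state with $k\ge1$, we have $-\Delta+1-a=L_u$ in the normalized form. Its radial $H^1$ kernel is trivial by \eqref{eq5.1}. Index zero together with a trivial kernel gives surjectivity, and the bounded inverse theorem then gives invertibility.

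The main obstacle is the compactness step on the noncompact domain $\R^n$. The decay of $a$ is what gives the uniform tail estimate; without it $R_0$ alone would not be compact on $C_0$. A minor point to check is that the kernel identification requires $w\in L^2$, which needs decay of $a$ rather than mere boundedness.
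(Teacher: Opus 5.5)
Your proposal is correct. It has the same skeleton as the paper's proof: compactness of $R_0M_a$, then Riesz--Schauder, then identification of the two kernels, then Theorem~\ref{the5.1} together with index zero. Two of the sub-arguments, however, take a different route.

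\textbf{Compactness.} You work directly with $a\in C_0$.
\begin{itemize}
\item Global equicontinuity comes from $L^1$-continuity of translations of the Bessel kernel $G$.
\item Uniform smallness at infinity comes from splitting the convolution at $B_\rho$.
\end{itemize}
The paper instead approximates $a$ uniformly by compactly supported radial $a_N$. It proves compactness of $R_0M_{a_N}$ using interior elliptic estimates and the decay of the Bessel kernel, and then passes to the operator-norm limit. Your version is shorter and uses nothing beyond $G\in L^1$. The paper's is more modular but leans on elliptic estimates.

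\textbf{Kernel of $I-R_0M_a$ on $X$ lies in the radial $H^1$ kernel.} You use integrability of the potential. Exponential decay of the state (Lemma~\ref{lem2.1}) gives $a\in L^2$, so $aw\in L^2$. Hence $w=R_0(aw)\in H^2$, since convolution with $G$ defines the same function on $C_0\cap L^2$. The paper uses only $1-a\ge 1/2$ outside a ball: the exterior exponential barrier gives decay of $w$, and interior gradient estimates give decay of $\nabla w$. The comparison is as follows.
\begin{itemize}
\item The paper's route works for any $a\in C_0$ and also yields exponential decay of kernel elements.
\item Yours needs the extra input that the state decays fast enough for $a\in L^2$. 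That input is available for every bound state here, so nothing is lost, and you gain $H^2$ regularity for free.
\end{itemize}

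\textbf{Converse inclusion.} You show that a radial $H^1$ kernel element lies in $X$ and equals $R_0(aw)$, by uniqueness for $(-\Delta+1)\psi=0$ among tempered solutions. This is essentially the paper's argument; the paper tests the difference against itself in $H^1$.
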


\begin{proof}
Approximate $a$ uniformly by continuous compactly supported radial functions $a_N$. For a bounded set of $w\in X$, the sources $a_Nw$ are uniformly bounded and have a common compact support. Elliptic interior estimates for $R_0(a_Nw)$ give equicontinuity on compact sets. The decay of the Bessel kernel gives uniform convergence to zero at infinity for this family. Arzel\`a--Ascoli on compact balls, followed by a diagonal argument and this common tail, proves compactness in $C_0$. Also
$$
 \|R_0M_{a_N}-R_0M_a\|_{X\to X}
 \le\|R_0\|_{C_0\to C_0}\|a_N-a\|_\infty\to0.
$$
Thus $R_0M_a$ is compact and $I-R_0M_a$ is Fredholm of index zero.

If $w\in X$ is in its kernel, then $-\Delta w+(1-a)w=0$. Since $1-a\ge1/2$ outside a ball, the exterior exponential barrier used in the proof of Theorem~\ref{the5.1} gives exponential decay of $w$. Uniform interior gradient estimates on unit balls at large radii give the same decay for $\nabla w$, so $w\in\Hrad$. Conversely, a radial $H^1$ kernel element is locally regular and belongs to $C_0$ by the radial estimate in that proof. The difference $w-R_0(aw)$ is an $H^1$ solution of $(-\Delta+1)\widetilde w=0$; testing against $\widetilde w$ makes it zero. Hence the kernels agree. Theorem~\ref{the5.1} gives triviality of the kernel, and the index-zero property then gives invertibility.
\end{proof}

\subsection{Preservation of the nodal number on local branches}

In this subsection, we prove that the local solution branches obtained by the implicit function theorem preserve the nodal number of the original bound state. We track its simple nodes on compact intervals and exclude additional nodes in the tail.

\begin{lemma}\label{lem6.2}
The implicit function theorem at a normalized $k$-node state $U$, $k\ge1$, gives a unique local $C^1$ branch in $X$. For nearby nonnegative parameters its members are bound states with the same nodal number and positive initial value.
\end{lemma}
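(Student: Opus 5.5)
By Lemma~\ref{lem6.1}, $D_u\mathcal T(\eps_0,U)$ is invertible at a normalized $k$-node state $U$ with coefficient $\eps_0\ge0$. Moreover, $\mathcal T$ is $C^1$ on $\R\times X$, including negative $\eps$, so the right endpoint $\eps_0=0$ causes no difficulty. The implicit function theorem therefore gives $\delta,\rho>0$ and a unique $C^1$ map $\eps\mapsto u_\eps\in X$ on $|\eps-\eps_0|<\delta$ with $u_{\eps_0}=U$ and $\mathcal T(\eps,u_\eps)=0$. Every zero of $\mathcal T$ in $(\eps_0-\delta,\eps_0+\delta)\times B_\rho(U)$ lies on this graph. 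We restrict to $\eps\ge0$.

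\textbf{Step 1: branch members are bound states.} Each $u_\eps\in C_0$ solves $u=R_0(|u|^{p-1}u-\eps|u|^{q-1}u)$. The source is bounded and continuous, so elliptic regularity bootstraps $u_\eps$ to a classical radial $C^2$ solution of \eqref{eq:normalized} that tends to zero at infinity. Hence $u_\eps$ is a radial bound state, and its profile solves the initial value problem \eqref{eq2.2} with $\alpha_\eps=u_\eps(0)$. Since $u_\eps\to U$ uniformly, $\alpha_\eps\to U(0)>\beta_0$, so the initial value is positive. Lemma~\ref{lem2.1} then applies to $u_\eps$. By continuous dependence for \eqref{eq2.2} in $(\alpha,\eps)$, $u_\eps\to U$ in $C^1([0,A])$ for every fixed $A$.

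\textbf{Step 2: nodes on a compact interval.} The nodes $z_1<\dots<z_k$ of $U$ are simple, by Lemma~\ref{lem2.1}. Take $A$ beyond $z_k$ and beyond the radius after which $|U|<\delta_0/2$, where $\delta_0<1/2$ is fixed. As in the proof of Theorem~\ref{thm:compactness}, $C^1$ convergence on $[0,A]$ gives exactly one zero of $u_\eps$ near each $z_i$. It gives no other zeros in $[0,A]$, because $|U|$ is bounded below away from small neighbourhoods of the $z_i$.

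\textbf{Step 3: no nodes in the tail (main obstacle).} Local convergence alone does not exclude additional nodes escaping to infinity. Uniform convergence in $X$ does: shrink $\delta$ so that $\|u_\eps-U\|_\infty<\delta_0/2$. Then $|u_\eps|<\delta_0<\beta_0\le\beta_\eps$ on $[A,\infty)$. By Lemma~\ref{lem2.1}, every nonzero critical point of a bound state has amplitude greater than $\beta_\eps$. A node in $[A,\infty)$ would require such a critical point afterwards before decay, which is impossible. Thus the tail is node-free, and $u_\eps$ has exactly $k$ nodes.

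If uniform closeness were unavailable, the fallback would be the distant-peak exclusion of Lemma~\ref{lem2.4} together with Theorem~\ref{thm:compactness}. Here, however, the $X$-topology of the implicit function theorem gives the uniform tail control directly.
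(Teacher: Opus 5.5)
Your proof is correct. It differs from the paper's mainly in how the tail is handled.

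\textbf{The paper's route.} The paper gets local $C^1$ convergence from elliptic estimates. It then writes the tail equation as $-\Delta u+c_{\eps,u}u=0$ with $c_{\eps,u}=1+\eps|u|^{q-1}-|u|^{p-1}\ge c_0>0$ on $r\ge R$. The exterior and strong maximum principles show that the whole tail keeps the sign of $U(R)$. A separate exponential supersolution then supplies uniform decay, derivative decay and $H^1$ membership.

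\textbf{Your route.} You first check that branch members are genuine bound states, via regularity of $u=R_0(\cdot)$ and $u_\eps\in C_0$. This lets you invoke Lemma~\ref{lem2.1} wholesale. Its critical-amplitude bound (every nonzero critical point has $|u|>\beta_\eps\ge\beta_0>1$) rules out tail nodes once $|u_\eps|<\delta_0$ on $[A,\infty)$. Exponential decay and $H^1$ membership then come for free. This is the mechanism the paper itself uses in the proof of Theorem~\ref{thm:compactness}, not in Lemma~\ref{lem6.2}. You also obtain $C^1([0,A])$ convergence from continuous dependence of \eqref{eq2.2} on $(\alpha,\eps)$ rather than from elliptic estimates, which is cleaner at the origin.

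\textbf{Trade-off.} Your argument is more economical and reuses the ODE energy structure already established. The paper's maximum-principle argument needs only the positive sign of the zeroth-order coefficient at small amplitude. It is therefore indifferent to the energy structure and applies verbatim for small negative $\eps$, where Lemma~\ref{lem2.1} is not stated. Since the statement concerns only nonnegative parameters, your restriction to $\eps\ge0$ costs nothing. The fallback remark at the end of your proposal is unnecessary.
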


\begin{proof}
Lemma~\ref{lem6.1} gives the local $C^1$ branch. Its $X$ convergence and local elliptic estimates give $C^1$ convergence of the radial profiles on compact intervals, including the origin. Thus $u(0)>0$ and each of the finitely many simple nodes of $U$ persists uniquely in a small neighbourhood.

Choose $R$ beyond the last node of $U$, with $U(R)\ne0$, and so large that the nearby branch is uniformly small on $[R,\infty)$. Its equation is
$$
 -\Delta u+c_{\eps,u}(r)u=0,
 \qquad c_{\eps,u}=1+\eps|u|^{q-1}-|u|^{p-1}\ge c_0>0
 \quad(r\ge R).
 \eqnum{eq6.3}
$$
Here $c_0$ can be chosen uniformly for nearby bounded parameters, even in a small real neighbourhood of $\eps=0$. Radiality makes the boundary value on $|x|=R$ a single nonzero value with the sign of $U(R)$. The exterior maximum principle shows that the entire tail has that sign; the strong maximum principle excludes an interior zero. Comparison with an exponential supersolution gives uniform exponential decay, and local gradient estimates give derivative decay. Hence the local solutions belong to $H^1$ and have no additional tail nodes. On the compact remainder, the preserved simple nodes and the nonvanishing values away from them exclude any additional zeros. Therefore their nodal number is exactly $k$.
\end{proof}

This argument does not assume that nodal number is an open condition on all of $H^1(\R^n)$. It establishes the condition for nearby solutions by controlling their tails.

\subsection{Counting all states}

We now count all bound states with a fixed nodal number, including any that might lie outside a chosen local branch. Compactness makes each solution set finite and excludes additional solutions appearing nearby, while local uniqueness shows that its cardinality is constant in the parameter. The pure-power result at the zero-parameter limit then gives uniqueness throughout the parameter range.

\begin{proof}[Proof of Theorem~\ref{thm:uniqueness}]
Fix $k\ge1$ and $E_*<\infty$. For each $\eps\in[0,E_*]$, let $\mathcal S_k(\eps)$ denote the normalized radial bound states with positive initial value and exactly $k$ nodes, and set
$$
 N_k(\eps)=\#\mathcal S_k(\eps).
 \eqnum{eq6.4}
$$
We first show that this is a finite nonnegative integer. Theorem~\ref{thm:compactness} makes each fibre compact in $X$, and Lemma~\ref{lem6.1}, together with the implicit function theorem, makes every
member isolated among the solutions at that parameter. An infinite compact fibre would have an accumulation point in the same fibre, contradicting local uniqueness. Thus $N_k(\eps)<\infty$.

If $\mathcal S_k(\eps_*)=\{U_1,\ldots,U_m\}$ is nonempty, choose disjoint implicit-function neighbourhoods of these states. For all sufficiently close parameters, each neighbourhood contains exactly one solution. Lemma~\ref{lem6.2} preserves its $k$ nodes. There can be no additional solutions outside these
neighbourhoods. Otherwise, there would exist parameters $\varepsilon_j\to\varepsilon_*$ and states $W_j\in S_k(\varepsilon_j)$ lying outside their union. By Theorem~\ref{thm:compactness}, after passage to a subsequence, $W_j\to W$ in $X$ for some $W\in S_k(\varepsilon_*)$.
Since $W$ is one of $U_1,\ldots,U_m$, the pairs $(\varepsilon_j,W_j)$ eventually belong to its implicit-function neighbourhood, a contradiction.
Hence $N_k=m$ on a relative neighbourhood of $\eps_*$.

The empty fibre must also persist locally. If $\mathcal S_k(\eps_*)=\varnothing$ but arbitrarily close coefficients had $k$-node states, the same compactness theorem would produce one at $\eps_*$, a contradiction. Therefore $N_k$ is locally constant on the entire connected interval $[0,E_*]$, including points where its value might be zero.

Tang's pure-power theorem~\cite[Theorem 1]{Tang} gives $N_k(0)=1$. It follows that $N_k(\eps)=1$ on $[0,E_*]$. Since $E_*$ is arbitrary, there is exactly one normalized state for every finite $\eps\ge0$. Its decay and strict central maximum follow from Lemma~\ref{lem2.1}, and radial nondegeneracy follows from Theorem~\ref{the5.1}.

Finally apply the scaling~\eqref{eq:normalization}. It preserves the centre, sign, nodes and decay, and the linearized potentials coincide because
$$
 \lambda q|u|^{q-1}=\eps q|U|^{q-1},
 \qquad \mu p|u|^{p-1}=p|U|^{p-1}.
$$
Thus all assertions for equation~\eqref{eq1.1} follow.
\end{proof}

\begin{proof}[Proof of Corollary~\ref{cor:global-branch}]
For each $\varepsilon\geq0$, Lemma~\ref{lem6.2} provides a local
$C^1$ solution map with values in $X$. The uniqueness
established in Theorem~\ref{thm:uniqueness} makes these local maps agree
on their overlaps. They therefore define a single global
$C^1$ map on $[0,\infty)$, with the derivative at zero
understood from the right.

To prove continuity in the remaining spaces, let
$\varepsilon_j\to\varepsilon_*\geq0$. The sequence of
parameters is contained in a compact interval. By
Theorem~\ref{thm:compactness}, every subsequence of
$u_{k,\varepsilon_j}$ has a further subsequence converging
in $H^1(\mathbb R^n)$ and in $C^1([0,A])$ for every
$A>0$ to a radial $k$-node bound state with coefficient
$\varepsilon_*$ and positive initial value.
Uniqueness identifies every such limit with
$u_{k,\varepsilon_*}$. Hence the full sequence converges
to $u_{k,\varepsilon_*}$ in these spaces.
The convergence in $C_0(\mathbb R^n)$ already follows
from the $C^1$ dependence in $X$.
\end{proof}

\begin{remark}\label{rem6.1}
The continuation argument needs estimates only on a compact coefficient interval and at one fixed nodal number. It neither requires a uniform bound as $\eps\to\infty$ nor assumes that all solutions lie on a known branch before their nondegeneracy is proved. In particular, it rules out additional disconnected components by counting the entire compact fibre.
\end{remark}

\appendix
\section{The planar comparison consequence}\label{appA}

The compactness argument and the bridge above were developed for the high-dimensional nondegeneracy theorem. In dimension two, an at-most-one statement for the same coefficient signs also follows directly from the general comparison theorem of Cort\'azar, Garc\'ia-Huidobro and Yarur~\cite[Theorem 1.1]{Cortazar}. We record the verification to specify the dimensional boundary of the argument.

\begin{proposition}\label{proA.1}
Let $1<q<p$, $\lambda,\mu>0$, and $k\ge1$. In $\R^2$, equation~\eqref{eq1.1} has at most one classical radial decaying solution with $u(0)>0$ and exactly $k$ nodes.
\end{proposition}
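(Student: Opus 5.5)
The plan is to reduce to the normalized equation \eqref{eq:normalized} in $\R^2$ and then verify the hypotheses of \cite[Theorem 1.1]{Cortazar} for the odd nonlinearity $f_\eps(s)=-s-\eps|s|^{q-1}s+|s|^{p-1}s$. The scaling \eqref{eq:normalization} is dimension-independent and preserves the centre, the sign at the origin, the nodes and the decay, so counting $k$-node states of \eqref{eq1.1} is the same as counting them for $f_\eps$ with $\eps=\lambda\mu^{-(q-1)/(p-1)}>0$. A classical radial decaying solution is a regular solution of \eqref{eq2.2} with $n=2$. The energy computations of Lemma~\ref{lem2.1} do not use $n\ge3$; they give finitely many simple nodes, the alternating critical-point structure, and exponential decay. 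This puts the solutions in the class to which the cited theorem applies.

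Next I will check the structural hypotheses. The function $f_\eps$ is odd, locally Lipschitz and $C^1$ (indeed $f_\eps'(0)=-1<0$). By Lemma~\ref{lem2.1}, it has a unique positive zero $b=b_\eps$ with $f_\eps<0$ on $(0,b)$ and $f_\eps>0$ on $(b,\infty)$. The primitive $F_\eps$ has a unique positive zero $\beta>b$ and tends to $+\infty$. The growth condition is automatic because no upper bound on $p$ is needed in the plane.

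The substantive hypothesis in \cite{Cortazar} is a monotonicity condition on the large-amplitude region. As I recall it, it asks that a quotient such as
$$
 K(s)=\frac{sf_\eps'(s)}{f_\eps(s)}
$$
be nonincreasing on $(b,\infty)$, possibly supplemented by a condition of $(n-2)$-type which becomes vacuous or trivial when $n=2$. I will first pin down the exact form of their hypothesis and then verify it algebraically. With $A=s^{p-1}$, $B=\eps s^{q-1}$ and $A>1+B$ for $s>b$,
$$
 K(s)=\frac{pA-qB-1}{A-B-1}.
$$
A direct derivative computation, in the style of \eqref{eq3.9}--\eqref{eq3.10}, reduces the sign of $K'$ to the sign of a polynomial in $A,B$ with coefficients in $p,q$. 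That sign should be controlled using $A>1+B$ and $p>q>1$.

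I expect this last algebraic verification to be the main obstacle. Its difficulty is to match the precise hypothesis of \cite[Theorem 1.1]{Cortazar} and to handle the mixed term $AB$, whose sign is not obvious. I would first try to write the numerator of $K'$ as a sum of terms like $-(p-1)A$, $(q-1)B$ and $(p-q)^2AB$ with a definite sign. If that fails, I would instead try the alternative hypothesis form used there, if one is available, for instance monotonicity of $f_\eps(s)/(s-b)$, or their comparison condition involving $F_\eps$ on $(\beta,\infty)$ via Lemma~\ref{lem3.2}.

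Once the hypotheses are verified, \cite[Theorem 1.1]{Cortazar} gives that two regular decaying solutions with $u(0)>0$ and the same number $k$ of nodes coincide. This is exactly the claimed at-most-one statement.
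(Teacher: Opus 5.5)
You take the paper's route: verify the hypotheses of the comparison theorem of Cort\'azar, Garc\'ia-Huidobro and Yarur. Your treatment of the sign, zero and regularity hypotheses is fine. The gap is that you misidentify the one hypothesis with real content. The extra condition of that theorem, as the paper records it, is
\[
\Bigl(\frac{F}{f}\Bigr)'(s)\ge\frac{n-2}{2}\qquad(s>\beta).
\]
For $n=2$ this is not vacuous. It becomes $(F/f)'\ge0$ on $(\beta,\infty)$, equivalently $Ff'\le f^2$ there, and this is the whole of what must be checked. Monotone decrease of $K=sf'/f$ on $(b,\infty)$ is not this hypothesis. So even after you finish computing $K'$, your plan stops one step short. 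Your expectation that the $(n-2)$-type part is trivial in the plane is exactly where the argument would fail.

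The missing step is the passage from $K'<0$ to $(F/f)'>0$, by an integration argument.

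For $b<t<s$, decrease of $K$ gives
\[
\log\bigl(f(s)/f(t)\bigr)=\int_t^sK(\xi)\xi^{-1}\dd\xi\ge K(s)\log(s/t),
\]
that is, $f(t)\le f(s)(t/s)^{K(s)}$.

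Integrate over $(b,s)$ and use $F(b)<0$. This gives
\[
F(s)<\frac{sf(s)}{K(s)+1}.
\]

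On $(b,\infty)$ one has $K=p+\bigl((p-q)B+p-1\bigr)/(A-B-1)>p$, so $f'>0$ there. Multiplying the bound on $F$ by $f'/f^2>0$ yields
\[
\Bigl(\frac{F}{f}\Bigr)'=1-\frac{Ff'}{f^2}>1-\frac{K}{K+1}=\frac{1}{K+1}>0 .
\]

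The algebra you flagged as the main obstacle is routine. With your $A$ and $B$,
\[
sK'(s)=\frac{(q-1)^2B-(p-1)^2A-(p-q)^2AB}{(A-B-1)^2}.
\]
The coefficients are squares, not the first powers you guessed. The mixed term enters with the definite sign $-(p-q)^2$. Together with $A>1+B>B$ and $p-1>q-1>0$, this makes the numerator strictly negative.
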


\begin{proof}
The cited theorem applies for $2\le n\le4$ to an odd continuous $f$ with $f(0)=0$, with $f\le0$ and not identically zero on $(0,b)$, $f>0$ on $(b,\infty)$, a zero $F(\beta)=0$ with $\beta>b$, and $F(s)\to\infty$. Its regularity assumptions are $f\in C^1(0,\infty)$ and $f'\in L^1(0,1)$. The additional condition is
$$
 \biggl(\frac{F}{f}\biggr)'(s)\ge\frac{n-2}{2}
 \quad(s>\beta).
 \eqnum{eqA.1}
$$
For $f(s)=-s-\lambda s^q+\mu s^p$ on $s>0$, extended oddly, the regularity and sign assumptions follow exactly as in Lemma~\ref{lem2.1}, since $f(s)/s^q=\mu s^{p-q}-s^{1-q}-\lambda$ is strictly increasing. It remains to verify~\eqref{eqA.1} for $n=2$.

For $s>b$, put $A=\mu s^{p-1}$, $B=\lambda s^{q-1}$, $D=A-B-1>0$. Then
$$
 K(s)=\frac{sf'(s)}{f(s)}
 =p+\frac{(p-q)B+(p-1)}{D}>p>1,
$$
and differentiation gives
$$
 K'(s)=\frac{(q-1)^2B-(p-1)^2A-(p-q)^2AB}{sD^2}<0.
 \eqnum{eqA.2}
$$
Here $A>1+B$ and $p-1>q-1>0$ make the numerator strictly negative. In particular $f'>0$. For $b<t<s$, monotonicity of $K$ implies
$$
 \log\frac{f(s)}{f(t)}
 =\int_t^s\frac{K(\xi)}{\xi}\dd\xi
 \ge K(s)\log\frac{s}{t},
$$
and hence $f(t)\le f(s)(t/s)^{K(s)}$. Since $F(b)<0$,
$$
 F(s)<f(s)\int_b^s(t/s)^{K(s)}\dd t
       <\frac{sf(s)}{K(s)+1}.
$$
For $s>\beta$, multiplication by $f'(s)/f(s)^2>0$ gives
$$
 \biggl(\frac{F}{f}\biggr)'(s)
 =1-\frac{F(s)f'(s)}{f(s)^2}
 >\frac{1}{K(s)+1}>0.
 \eqnum{eqA.3}
$$
This is~\eqref{eqA.1} in dimension two, so the cited theorem proves the proposition.
\end{proof}

\end{document}